\author{Andreas Klein}
\title{Symplectic monodromy, quasi-homogeneous polynomials and spectral flow}
\renewcommand{\ker}{\mathrm{ker}}
\newtheorem{theorem}{Theorem}[section]
\newtheorem{Def}[theorem]{Definition}
\newtheorem{prop}[theorem]{Proposition}
\newtheorem{lemma}[theorem]{Lemma}
\newtheorem{folg}[theorem]{Corollary}
\begin{document}
\maketitle
\begin{abstract}
We encode the variation structure of a quasihomogeneous polynomial with an isolated singularity as introduced by Nemethi in a set of spectral flows of the signature operator on the Milnor bundle by varying global elliptic boundary conditions in a specific way using the quasihomogeneous circle action on the Brieskorn lattice. For this, we use adiabatic techniques and well-known results on spectral flow and Maslov index. Furthermore we interpret the inequality of a certain member of this family of spectral flows with a spectral flow induced by a Reeb flow on the boundary of the Milnor fibre as giving a sufficient condition for the 'symplectic monodromy' of the fibration to define an element of infinite order in the relative symplectic isotopy group of the Milnor fibre, this uses previous results of P. Seidel resp. of the author. We expect generalizations of the results to wider classes of (algebraic) singularities.
\end{abstract}

\section{Introduction}\label{intro}
This article continues resp. complements \cite{klein2} resp. \cite{klein3}, where we studied the Eta-invariant on the Milnor bundle of a quasihomogeneous polynomial with respect to a certain submersion metric resp. the 'symplectic monodromy' of the same bundle. One initial observation for the present work was that if $f \in \mathbb{C}[z_0,\dots,z_n]$, $n\geq 1$, is quasihomogeneous with isolated singularity in $0 \in \mathbb{C}^{n+1}$, that is there are integers $\beta_0, \dots\beta_n,\beta > 0$ such that $f(t^{\beta_0} z_0,\dots,t^{\beta_n} z_n)=t^\beta f(z_0,\dots,z_n)$ for any $t\in \mathbb{C}^*$, then there is a natural grading on the finite-dimensional $\mathbb{C}$-vectorspace 
\begin{equation}\label{mfdecomp}
M(f):=\mathcal{O}_{\mathbb{C}^{n+1},0}/(\frac{\partial f}{\partial z_0},\dots,\frac{\partial f}{\partial z_n})\mathcal{O}_{\mathbb{C}^{n+1},0}=\oplus_l M(f)_l
\end{equation}
induced by the eigenvector-decomposition corresponding to the $\mathbb{C}^*$-representation on $M(f)$ induced by the 
$\mathbb{C}^*$-action $\sigma(z)(z_0,\dots, z_n)=(z^{\beta_0}z_0,\dots,z^{\beta_n}z_n)$ on $\mathbb{C}^{n+1}$. In fact, it is well-known (\cite{bries},\cite{loo}) that there is a monomial basis $z^{\alpha(1)},\dots, z^{\alpha(\mu)}$, where $\alpha(k) \in \Lambda \subset \mathbb{N}^{n+1}, \ |\Lambda|=\mu$, of $M(f)$, such that the weights of the above representation can be chosen to be given by $l(\alpha(i))=\sum_k(\alpha(i)_k+1)w_k$, where $w_k=\beta_k/\beta$, that is $\sigma(z).z^{\alpha(i)}=z^{l(\alpha(i))}z^{\alpha(i)}$. Furthermore, considering the Milnor bundle
\[
f:f^{-1}(D)\cap B_{2n+2}=:X \rightarrow D,
\]
which is smooth over $D^*=D\setminus\{0\}$, for a small disk $D\subset \mathbb{C}$, $\mu$ equals the dimension of the middle cohomology of the fibre $F=f^{-1}(z)$ for some $z \in D^*$, which is by results of Milnor \cite{milnor} known to be $(n-1)$-times connected, parallelizable and homotopy-equivalent to a $\mu$-wedge of $n$-spheres, hence ${\rm dim}_\mathbb{C} H^n(F,\mathbb{C})=\mu={\rm dim}_{\mathbb{C}} M(f)$. It is also known, again by results of Brieskorn and Looijenga (\cite{loo}, Appendix A in \cite{klein2} for a brief introduction), that given the sheaf on $D$ $\mathcal{H}''=f_*(\Omega^{n+1}_X)/df\wedge d(f_*\Omega^{n-1}_{X/D})$, the so-called Brieskorn lattice, then the map $\phi$ sending $z^{\alpha(i)}$ to the class given by
\[
\phi(z^{\alpha(i)})=z^{\alpha(i)}dz_0\wedge\dots\wedge dz_n
\]
in $\mathcal{H}''_{0}$, defines a $\mathbb{C}$-isomorphism of vectorspaces $\phi:M(f)\simeq \mathcal{H}''_{0}/f\mathcal{H}''_{0}$, being, since $\mathcal{H}''$ is coherent, even free (\cite{bries}), also an isomorphism of the respective $\mathcal{O}_{D,0}$-modules. Again by the coherence of $\mathcal{H}''$ and the fact that $\mathcal{H}^n_{X/D}|D^*=\mathcal{H}''|D^*$, where $\mathcal{H}^n_{X/D}$ is the relative cohomology sheaf on $D$ and fixing a monomial basis of $M(f)$ as above, we conclude to get a $\mathbb{C}^*$-representation on $\mathcal{H}^n_{X/D,s}$ for any $s \in D^*$, whose (real infinitesimal) weights are exactly given by $l(\alpha(i))$ and this was a starting point for the first part of this work. The question arose: \\

{\it Is there a $\mathbb{C}^*$-action on some space parametrizing 'well-posed' boundary conditions of an elliptic differential operator defined on the Milnor bundle $Y=f^{-1}(\partial D^*)$ which is naturally induced by the above action on $\mathcal{H}^n_{X/D,s}$? What could be spectral invariants of this action and what would be their relation to the above weights $l(\alpha(i))$?}\\

To answer this question, consider a closed, symmetric (unbounded) operator $D$ on a Hilbert space $(H, (\cdot,\cdot)_H)$ with domain dense $\mathcal{D}_{min}$ and domain of the adjoint $D^*$, $\mathcal{D}_{max}$, so $\mathcal{D}_{max}$ is the maximal closed extension of $D$ in $H$. Now the space 
\[
\beta=\mathcal{D}_{max}/\mathcal{D}_{min}
\]
comes equipped with a structure of a symplectic Hilbert space with scalar product induced by the graph scalar product on $H$ and the symplectic form $\Omega$ is given by
\[
\Omega(f,g)=(Df,g)_H\ -\ (f,D^*g)_H,
\]
while there is a natural trace map $r:\mathcal{D}_{max}\rightarrow \mathcal{D}_{max}/\mathcal{D}_{min}$ being the quotient map. Then it is obvious that any domain $\mathcal{D}_{min}\subset \mathcal{D}\subset \mathcal{D}_{max}$ so that $r(\mathcal{D})$ is Lagrangian in $\beta$ defines a self-adjoint extension of $D$. Furthermore, it is known (see \cite{booblee}) that the existence of at least one self-adjoint Fredholm extension of $D$ implies that $\Lambda:=r({\rm ker}(D^*))$ is Lagrangian and the associated extension becomes Fredholm, $\Lambda$ is called the Cauchy data-space. Now consider the odd signature operator $D$ on $Y=X|\partial D$, acting on the smooth forms in $L^2(\Omega^{even}(Y,\mathbb{C}))$:
\begin{equation}\label{dirac}
D(\beta)=i^{n}(-1)^{p-1}(*d-d*)(\beta),\ {\rm for }\ \beta \in
\Omega^{2p}(Y,\mathbb{C})=:E.
\end{equation}
Assuming that $Y$ is equipped with a metric collar $[0,\epsilon)\times \partial Y$ and using Green's formula we infer that in this case $\Omega(\phi,\psi)=<r\phi,\gamma r\psi>_{\partial Y}$, where $<\cdot, \cdot>_{\partial Y}$ is the natural $L^2$-product on forms on $\partial Y$ and $\gamma^2=-id$. It is a result of Hoermander resp. Booss-Bavnbek \cite{booblee} that the Cauchy data space in this situation equals
\[
\Lambda\simeq\Lambda(D,0)=\overline {r\{f \in C^\infty(\Omega^{even}(Y,\mathbb{C}))|Df=0 \in Y\setminus\partial Y\}}^{H_{-1/2}(\Omega^*(\partial X,\mathbb{C}))},
\]
where here, $r=i^*+i^**$, where $i:\partial Y \hookrightarrow Y$, so we are naturally led to consider Lagrangian subspaces of $L^2(\Omega^*(\partial Y,\mathbb{C}))$ with respect to the symplectic form $\Omega$. Note that, compared to the above 'natural' Cauchy-data-spaces, we will work in the $L^2$-setting, which means considering the above closure in $L^2$, giving the space $\Lambda(D,1/2)$, the approaches are in a sense equivalent, as was shown in \cite{booblee}. There is a natural orthogonal pseudodifferential projection $P_Y$ of order zero onto $\Lambda$, called the Calderon projection and we will consider certain compact perturbations $P$ of this projection. More precisely, we will consider projections $P$ being pseudo-differential of order $0$ so that $(P,P_Y)$ are Fredholm pairs and whose images are Lagrangian leading to self-adjoint Fredholm extensions of $D$ with compact resolvent, as was shown by Cheeger and Bruening (\cite{brules}). We will denote the set of those projections by ${\rm Gr}(A)$, where $A$ is the tangential operator. Explicitly these self-adjoint extensions are given by
\begin{equation}\label{proj}
\mathcal{D}(D_P)=\{s \in H_1(E)|P(s_{|\partial X})=0\}\subset L^2(E), \ P\in {\rm Gr}(A).
\end{equation}
Now one observes that following Atiyah, $\mathcal{K}_Y$ the 'extended $L^2$-solutions of $D$' on the elongation $Y_\infty$ defined by 'stretching its collar to infinity' (see Appendix A) define a finite-dimensional subspace $\mathcal{K}_Y$ of $\Lambda$ (we will consider $\Lambda$ and $\mathcal{K}_Y$ in this introduction interchangingly as subspaces of $L^2(E)$ and of $L^2(E|\partial Y)$ via the trace map). On the other hand, we can perturb the metric in a neighbourhood of $\partial Y$ so that we can isometrically glue a copy of the metrically trivial bundle $Y_0=F\times S^1$ along their common boundary. The image of the Cauchy-data space of $Y_0$ under glueing will become a Lagrangian subspace of $L^2(E|\partial Y)$ which contains as isotropic closed subspace the corresponding space of extended $L^2$-solutions $\mathcal{K}_{Y_0}$ and we can represent $H^n(F;\mathbb{C})$ as a subspace of $\mathcal{K}_{Y_0}$:
\begin{equation}\label{inclusions}
H^n(F,\mathbb{C})\subset \mathcal{K}_{Y_0}\subset \Lambda.
\end{equation}
It is now clear how to extend the above $C^*$-action, restricted to $S^1$, on $H^n(F,\mathbb{C})$ to an $S^1$-action on the set of Lagrangians in $(L^2(\Omega^{*}(\partial Y,\mathbb{C})),\Omega)$ having non-trivial intersection with the symplectic subspace $\mathcal{W}$ given by $(\mathcal{K}_{Y_0}, \gamma \mathcal{K}_{Y_0})$. Let $\pi_0:\mathcal{W}\rightarrow \mathcal{K}_{Y_0}$ be the $\gamma$-orthogonal projection onto the Lagrangian subspace $\mathcal{K}_{Y_0}$ in $\mathcal{W}$. Then if $M$ is the diagonal matrix having as entries the (real) eigenvalues of the Gauss-Manin-connection $\nabla^{GM}$ acting on $\mathcal{H}^n_{X/D}$ with respect to the $\mathcal{O}_{D,0}$-basis of $\mathcal{H}''_{0} \simeq M(f)$ given by a set of $\mu$ monomials $z^{\alpha(1)},\dots, z^{\alpha(\mu)}$, we define a path of isotropic subspaces in $L^2(E|\partial Y)$ by
\begin{equation}\label{actionmatrix}
\sigma(e^{2\pi it})(s):=e^{2\pi i t\gamma (M\circ \pi_0)}s, \  s \in H^n(F,\mathbb{C}) \subset \mathcal{K}_{Y_0}.
\end{equation}
We will give a more geometric formulation of this in Chapter \ref{monodr}, which can be seen to be equivalent (Lemma \ref{actionmat}). Extending $\sigma$ by the identity to the orthogonal complement of $H^n(F,\mathbb{C})\subset \mathcal{K}_{Y_0}$ in $\Lambda$, we get a family $\Lambda(t)$ and the first result reads as (cf. Theorem \ref{theorem2}, 1.):
\begin{theorem}\label{theorem1intro}
$\Lambda(s)$ defines a family of self-adjoint Fredholm extensions of the signature operator $D$ on $Y$ and the associated spectral flow (cf. Definition \ref{S2.2}) equals the sum $-2\beta(\sum_{i=1}^\mu l(\alpha(i))-1)$, with the weights $l(\alpha(i))$ as given above.
\end{theorem}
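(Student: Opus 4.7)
The plan is to reduce the spectral-flow computation to a finite-dimensional Maslov-index calculation, exploiting the fact that $\sigma$ acts as the identity outside the finite-dimensional subspace $H^n(F,\C)\subset\mathcal{K}_{Y_0}$ of $\Lambda$.

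First I would verify that each $\Lambda(t)$ is the image of a pseudo-differential projection in $\Gr(A)$ so that $D_{\Lambda(t)}$ is a self-adjoint Fredholm extension with compact resolvent in the sense of (\ref{proj}). Since $\pi_0$ has finite rank and $M$ is bounded and diagonal on $\mathcal{K}_{Y_0}$, the family $U(t):=e^{2\pi i t\,\gamma(M\circ\pi_0)}$ is a strongly continuous family of unitaries on $L^2(\Omega^{even}(\partial Y,\C))$ differing from the identity by finite-rank operators. Using $\gamma^2=-\id$ and the compatibility of $M\circ\pi_0$ with $\gamma$ on $\WW$, one checks that $U(t)$ maps the Lagrangian $\Lambda=\Lambda(0)$ to a Lagrangian and that $(P_{\Lambda(t)},P_Y)$ is a Fredholm pair of projections for all $t$. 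The framework of \cite{booblee}, together with Bruening--Lesch \cite{brules}, then produces the required continuous family of self-adjoint Fredholm extensions.

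Next, I would invoke the spectral-flow--Maslov-index theorem for paths of self-adjoint elliptic boundary value problems (see \cite{booblee}; compare Nicolaescu and Cappell--Lee--Miller):
\[
\SF\bigl(\{D_{\Lambda(t)}\}\bigr)=\mathrm{Mas}\bigl(\Lambda(t),\Lambda(0)\bigr).
\]
Because $U(t)$ acts as the identity on the $\Omega$-orthogonal complement of the finite-dimensional symplectic subspace $\WW$, the Maslov index localises to a finite-dimensional Maslov index inside $\WW$. Diagonalising $M$ in the monomial basis $\{z^{\alpha(i)}\}$ splits $\WW$ (together with the distinguished direction in $\mathcal{K}_{Y_0}\setminus H^n(F,\C)$) into $\Omega$-orthogonal $\gamma$-invariant real $2$-planes on which $U(t)$ restricts to rotations of angular velocity $2\pi l(\alpha(i))$. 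Because the weights $l(\alpha(i))=\sum_k(\alpha(i)_k+1)w_k$ lie in $\tfrac{1}{\beta}\Z$, the path becomes a loop precisely over $t\in[0,\beta]$. Standard Maslov-index formulas for such rotations then give a contribution $-2\beta\, l(\alpha(i))$ from each monomial direction, with an offsetting $+2\beta$ coming from the direction of $\mathcal{K}_{Y_0}$ that is $\gamma$-dual to $H^n(F,\C)$ and on which $U(t)$ is trivial. Summing the $\mu$ contributions yields the claimed total
\[
\SF\bigl(\{D_{\Lambda(t)}\}\bigr)=-2\beta\Bigl(\sum_{i=1}^\mu l(\alpha(i))-1\Bigr).
\]

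The principal technical obstacle is the infinite- to finite-dimensional reduction: one must verify that the Calder\'on projection $P_Y$ decomposes along $\WW\oplus\WW^\perp$ in a manner compatible with $U(t)$, so that no spectral crossings are produced outside $\WW$, and that the non-$\mathcal{K}_{Y_0}$ extended $L^2$-solutions in $\mathcal{K}_Y$ contribute trivially. The secondary delicate point is the bookkeeping that pins down the orientation factor $-2\beta$ and the constant $-1$: matching the complex structure $\gamma$ on $\WW$ with the grading supplied by the Gauss--Manin eigenvalues, and aligning the quasihomogeneous monodromy (of period $\beta$) with the chosen normalisation of the Lagrangian Grassmannian, is where the actual work of the argument sits, everything else being a formal application of known spectral-flow and Maslov-index machinery.
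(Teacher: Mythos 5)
Your skeleton coincides with the paper's (membership of the path in ${\rm Gr}(A)$, Nicolaescu's spectral-flow/Maslov identity, localisation to a finite-dimensional symplectic piece, a rotation-number count), but the two places where you locate the "actual work" are precisely where your outline would fail. First, the localisation step is only asserted. Theorem \ref{nicolaescu} pairs the moving boundary condition with the Cauchy data space of $Y$ itself, not with $\Lambda(0)$: as written, ${\rm Mas}(\Lambda(t),\Lambda(0))$ is either not what the boundary-reduction theorem produces, or (if one reads $\Lambda(0)$ as the Cauchy data space of $Y$) is undefined at $t=0$ because the pair has infinite-dimensional intersection. Moreover, at finite collar length the Cauchy data space of $Y$ does not decompose compatibly with $\mathcal{W}\oplus\mathcal{W}^{\perp}$, so "no crossings outside $\mathcal{W}$" needs proof. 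The paper's mechanism is the adiabatic limit: the boundary conditions are built from the Cauchy data space of the complementary trivial piece $X_0$, the path is a composite of three paths (stretch the collar to infinity, rotate the limit $L^{\infty}_{X_0}$, run back), the limits decompose as in (\ref{decomp}) and (\ref{decomp2}), the outer two pieces contribute zero Maslov index because $\dim\bigl(L^{r}_{X}\cap L^{r}_{X_0}\bigr)$ is independent of $r\in[0,\infty]$, and only then does the computation reduce to the finite-dimensional winding number evaluated via (\ref{sigmat}) and Lemma \ref{Lagrangian}. Some substitute for this machinery is indispensable and is absent from your proposal.

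Second, the bookkeeping that you yourself flag as the delicate point is wrong. The matrix $M$ consists of the Gauss--Manin eigenvalues, which in the monomial basis are $l(\alpha(i))-1$; this is exactly Lemma \ref{mult} ($\mathcal{L}_K\Phi_j=(l(\alpha(j))-1)\Phi_j$, the $-1$ coming from $i_K df=f$), and it is the step that actually brings the weights into the spectral flow. Consequently each monomial two-plane rotates with speed $l(\alpha(i))-1$ and contributes $-2\beta\,(l(\alpha(i))-1)$, giving the total $-2\beta\sum_{i=1}^{\mu}\bigl(l(\alpha(i))-1\bigr)$, which is the precise statement of Theorem \ref{theorem2}(1) with $\Lambda'=\Lambda$ (and is what the introductory formula abbreviates, cf.\ ${\rm SF}(\alpha(i))=\beta(l(\alpha(i))-1)$ per monomial). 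Your accounting --- a contribution $-2\beta\,l(\alpha(i))$ from each monomial plus a single offsetting $+2\beta$ from a direction "on which $U(t)$ is trivial" --- cannot work: a direction on which the unitary family is constant contributes zero winding number, hence zero spectral flow, and no such compensating direction occurs in the computation; the $-1$ must appear once per monomial, so your total differs from the correct one by $2\beta(\mu-1)$.
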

The proof uses the fact that in the adiabatic limit, the image of the Calderon projector decomposes in a certain predictable way. This result goes back to Nicolaescu \cite{Nicol2} and Kirk and Lesch \cite{lesch} (cf. Lemma 2.14 in \cite{klein2}). Furthermore a certain boundary reduction theorem of Nicolaescu for the spectral flow is used, which involves the Maslov Index of pairs of Lagrangians of $L^2(E|\partial Y)$ (Theorem \ref{nicolaescu}). Restricting the circle action on $H^n(F,\mathbb{C})$ to single elements of a fixed basis of eigenvectors while fixing the others determines loops of Lagrangians $\Lambda_i(t)$. The corresponding family of spectral flows ${\rm SF}(\alpha(i))=\beta(l(\alpha(i))-1)$ and $\beta$ is then equivalent to the set of weights $l(\alpha(i))$ of $\sigma$ acting on $M(f)$. Since it is a result of Nemethi \cite{nem1}, that the latter determine the variation structure of $f$, given by the $4$-tupel $(H^n(F,\mathbb{C})=:U, b, h^*, V)$, where $b:U^*\rightarrow U$ is the intersection form of $f$, $h^*$ its algebraic monodromy and $V:H^n(F,\mathbb{C})\rightarrow H^n(F,\partial F, \mathbb{C}), V(\alpha)=[(h-id)^*(\alpha)]$ its variation mapping, where $h \in {\rm Diff}(F,\partial F)$ is the monodromy diffeomorphism of $f$, we get (cf. Theorem \ref{theorem2}, 3.)
\begin{folg}
The set of spectral flows ${\rm SF}(\alpha(i)), \ i=1, \dots, \mu$ and $\beta$ determines the variation structure $(U, b, h^*, V)$ of $f$.
\end{folg}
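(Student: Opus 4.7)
The plan is to deduce the corollary directly from Theorem \ref{theorem1intro} (applied in its fibrewise form for the restricted circle action on single basis vectors) together with Nemethi's result cited in the paragraph preceding the statement.

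First I would observe that by the paragraph following Theorem \ref{theorem1intro}, restricting the $S^1$-action defined in \eqref{actionmatrix} to a single eigenvector $z^{\alpha(i)}$ of the fixed monomial basis of $M(f)\simeq H^n(F,\mathbb{C})$ (while leaving the other basis elements fixed) produces a well-defined loop $\Lambda_i(t)$ of Lagrangian subspaces and a corresponding family of self-adjoint Fredholm extensions of the signature operator $D$. Theorem \ref{theorem1intro} applied to this one-dimensional subfamily (which is really the content of the parenthetical identification $\SF(\alpha(i))=\beta(l(\alpha(i))-1)$ in the text) computes its spectral flow as
\[
\SF(\alpha(i)) \;=\; \beta\bigl(l(\alpha(i))-1\bigr), \qquad i=1,\dots,\mu.
\]

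Second, from the knowledge of the integer $\beta$ and of the $\mu$-tuple $(\SF(\alpha(1)),\dots,\SF(\alpha(\mu)))$ I would simply solve for the weights,
\[
l(\alpha(i)) \;=\; \frac{\SF(\alpha(i))}{\beta} \;+\; 1,
\]
thereby reconstructing the multiset $\{l(\alpha(1)),\dots,l(\alpha(\mu))\}\subset \Q_{>0}$ of infinitesimal weights of the $\C^*$-representation on $\mathcal{H}^n_{X/D,s}$. Note that multiplicities are preserved since the spectral flows are indexed by the fixed basis $z^{\alpha(1)},\dots,z^{\alpha(\mu)}$ of $M(f)$.

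Finally I would invoke Nemethi's theorem from \cite{nem1}, quoted explicitly in the excerpt: the collection of weights $l(\alpha(i))$ of the quasi-homogeneous $\C^*$-action on $M(f)$ (equivalently the spectrum in the sense of Brieskorn--Steenbrink) determines the variation structure $(U,b,h^*,V)$. Concatenating this with the reconstruction of the weights above gives the desired implication.

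The proof as sketched is essentially a one-line algebraic inversion of Theorem \ref{theorem1intro} followed by a citation. The only genuine content, and the one point I would be careful to justify rather than merely assert, is that the spectral flow formula of Theorem \ref{theorem1intro} is compatible with restriction to single basis directions, so that the additive decomposition $-2\beta(\sum_i l(\alpha(i))-1)=\sum_i (-2)\SF(\alpha(i))$ of the total spectral flow into its ``per-basis-vector'' contributions is legitimate; this should follow from the additivity of the Maslov index under the direct sum decomposition of $\mathcal{W}$ into the $\gamma$-orthogonal eigenlines of $M\circ\pi_0$ in $H^n(F,\C)$, together with the fact that the extension of $\sigma$ by the identity on the $\gamma$-orthogonal complement of $H^n(F,\C)$ in $\Lambda$ contributes zero spectral flow.
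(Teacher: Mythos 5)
Your proposal matches the paper's own argument: the paper likewise restricts the circle action to single basis monomials, reads off ${\rm SF}(\alpha(i))=\beta(l(\alpha(i))-1)$ from the spectral flow computation (Theorem \ref{theorem2}, part 1, applied to $\Lambda'=\{\alpha\}$), and then cites Nemethi's formula $\mathcal{V}(f)=\bigoplus_\alpha \mathcal{W}_{\exp(2\pi i l(\alpha))}((-1)^{[l(\alpha)]+n})$ to conclude that the recovered weights determine the variation structure. Your extra care about the per-basis-vector decomposition is consistent with how the paper handles it (the Maslov index computation is diagonal in the chosen orthonormal basis), so the proof is correct and essentially the same.
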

Note that from yet another viewpoint the set of spectral flows ${\rm SF}(\alpha(i)), i=1,\dots,\mu$ and the quasihomogeneous degree $\beta$ determine and are determined by the {\rm spectrum} ${\rm Sp}(f)$ of the quasihomogeneous singularity (see Varchenko \cite{varchenko2}, Definition 4.6 in \cite{klein2}), which is a set of rational numbers $\{\gamma_i\}_{i=1,\dots,\mu}$ being defined as the normalized logarithm $\gamma_i= (-1/2\pi i){\rm log} \lambda_i$ of the eigenvalues $\lambda_i, i=1\dots,\mu$ of the monodromy. Here, the normalization is determined by the asymptotic Hodge filtration on the 'canonical' Milnor fibre, for that terminology, see for instance Kulikov (\cite{kulikov}) resp. Appendix A in \cite{klein2}. In terms of our monomial basis $z^{\alpha(i)}$ of $M(f)$ and since $f$ being quasihomogeneous the monodromy is semi-simple, one has simply $\gamma_i= l(\alpha(i))-1$, hence written as a 'divisor', ${\rm Sp}(f)=\sum_{\alpha(i) \in \Lambda} \left(l(\alpha(i))-1\right) \in \mathbb{Z}^{(\mathbb{Q})}$. Now for an isolated quasihomogeneous singularity $f$ it is known that the spectrum is equivalent to the quasihomogeneous weights, while the latter determine by a result of Nemethi (\cite{nem2}) its Seifert form. By Durfee (\cite{durfee}), the Seifert form determines its topological type for $n\geq 3$, that is, the homeomorphism type of the pair $(\mathbb{C}^{n+1}, f^{-1}(0))$. Then, since the spectrum is equivalent to the $\{l(\alpha(i))\}^\mu_{i=1}$, these determine its topological type for $n\geq 3$. But Saeki (\cite{saeki}, Remark 3.10) shows that conversely, at least for $n\leq 2$, the topological type of a quasihomogeneous singularity determines the spectrum, summarizing we have
\begin{folg}\label{toptypeinv}
The set of spectral flows ${\rm SF}(\alpha(i)), \ i=1, \dots, \mu$ and $\beta$ determines and is determined by the spectrum ${\rm Sp}(f)$, that is ${\rm Sp}(f)=\sum_{\alpha(i) \in A} (-{\rm SF}(\alpha(i))/2\beta) \in \mathbb{Z}^{(\mathbb{Q})}$. Furthermore the topological type of $f$ is for $n\geq 3$, determined by and, for $n\leq 2$, determines, the set ${\rm SF}(\alpha(i)), \ i=1,\dots,\mu$.
\end{folg}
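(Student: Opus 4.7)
The plan is to chain the three assertions together using the per-eigenvector version of Theorem \ref{theorem1intro} together with two results from the singularity-theory literature (\cite{ramanujam} and \cite{saeki}).

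First, for the identity ${\rm Sp}(f)=\sum_i -\SF(\alpha(i))/(2\beta)$, I would invert the linear relation between $\SF(\alpha(i))$ and $\beta(l(\alpha(i))-1)$ furnished by restricting the construction of $\sigma$ to a single eigenvector of a fixed monomial basis. The ingredient needed on the spectrum side is the fact already recalled in the excerpt: for a quasihomogeneous isolated singularity, $h^*$ is semisimple with eigenvalues $\exp(2\pi i\,l(\alpha(i)))$, and the asymptotic Hodge filtration used to normalize the logarithm is precisely the one transported to $M(f)$ via $\phi$, so that $\gamma_i=l(\alpha(i))-1$ and ${\rm Sp}(f)=\sum_i(l(\alpha(i))-1)\in\Z^{(\Q)}$. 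Substituting the spectral-flow formula into this divisor expression yields the claimed identity.

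Second, for the direction with $n\ge 1$: from the data $\{\SF(\alpha(i))\}_{i=1,\dots,\mu}$ and $\beta$ one recovers via the identity the multiset $\{l(\alpha(i))\}$. Since $l(\alpha(i))=\sum_k(\alpha(i)_k+1)w_k$ runs over a complete monomial basis of $M(f)$, the rational weights $(w_0,\dots,w_n)$ are extracted from this multiset, for instance by reading off the Poincaré series of the graded algebra $M(f)$ whose pole structure encodes the $w_k$. The theorem of L\^e--Ramanujam \cite{ramanujam} then asserts that the weights determine the homeomorphism type of $(\mathbb{C}^{n+1},f^{-1}(0))$ for $n\ge 1$.

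Third, for the converse direction with $n\le 2$: Saeki's result (\cite{saeki} and references therein) says that in this dimension range the topological type of $(\mathbb{C}^{n+1},f^{-1}(0))$ conversely determines the weights $(w_0,\dots,w_n)$. Since Brieskorn's combinatorial description gives a monomial basis of $M(f)$ purely in terms of the weights, the numbers $l(\alpha(i))$, hence also the $\SF(\alpha(i))$ once $\beta$ is specified, are recovered. The main obstacle I anticipate is not conceptual but bookkeeping: one has to verify that the normalization constant in the per-eigenvector spectral-flow formula is exactly right for the displayed identity, and to pin down $\beta$ unambiguously (rather than only up to a common factor with the $\beta_k$), so that ``the set of spectral flows together with $\beta$'' carries precisely the same information as the spectrum. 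Once these conventions are aligned, the corollary reduces to a direct combination of Theorem \ref{theorem1intro} with \cite{ramanujam} and \cite{saeki}.
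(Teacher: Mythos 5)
Your proposal follows essentially the same route as the paper, which derives this corollary in the introduction from exactly the chain you describe: the per-eigenvector spectral-flow formula identifies $-{\rm SF}(\alpha(i))/2\beta$ with the spectral numbers $l(\alpha(i))-1$, the known equivalence of the spectrum with the quasihomogeneous weights, L\^e--Ramanujam for the $n\geq 1$ direction, and Saeki for the $n\leq 2$ converse. Your added remarks (recovering the weights from the multiset $\{l(\alpha(i))\}$ via the Poincar\'e series, and fixing the normalization of $\beta$ and of the spectral-flow constant) only make explicit points the paper asserts without detail, so the argument is correct and not a different approach.
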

The classical viewpoint of Milnor \cite{milnor} was to describe the fibration $(Y,\partial Y)$, restricted to $S^1\subset D$ as a 'fibred knot' in $S_{2n+1}$
\begin{equation}\label{openbook}
\phi=\frac{f}{|f|}: \tilde Y:=S^{2n+1}\setminus K_f\rightarrow S^1,
\end{equation}
where $K_f=\partial f^{-1}(0)\cap S^{2n+1}$. If $F=\phi^{-1}(z_0)$ for some $z_0\in S^1$, then the Wang exact sequence of that fibration and the long exact sequence of the pair $(F,\partial F)$ on one hand together with  Alexander- resp. Poincare-duality on the other hand one gets the following commutative diagram (\cite{nem1}):
\begin{equation}\label{gysinseq}
\begin{CD}
 0 \rightarrow H^{n-1}(\partial F, \mathbb{C}) @>>{\delta}> H^{n}(F,\partial F, \mathbb{C})  @>>{b}> H^n(F,\mathbb{C}) @>>r>H^{n}(\partial F,\mathbb{C}) \rightarrow 0\\
 @VV{\simeq}V  @VV {V^{-1}}V    @VV {id} V  @VV{\simeq}V  \\
0 \rightarrow H^n(\tilde Y,\mathbb{C}) @>>> H^{n}(F, \mathbb{C})  @>>h^*-id> H^n(F,\mathbb{C}) @>>{\delta}> H^{n+1}(\tilde Y,\mathbb{C}) \rightarrow 0.\\ 
\end{CD}
\end{equation}
The philosophy is here, to relate properties of the 'link' $K_f\simeq \partial F$ to properties of the fibre $F$ resp. the fibration $Y$ and its monodromy and vice versa, i.e. we see at once that $K_f$ is a rational homology sphere if and only if $b$ is non-degenerate as a sesquilinear form which is exactly the case if the characteristic polynomial of $h^*$ at $1$ is nonzero, that is, $\Delta(1)={\rm det}(h^*-id)\neq 0$. To adopt this point, we posed the question:\\

{\it To what extent is the set of weights of $\sigma$ acting on $M(f)$, equivalently the set of spectral flows ${\rm SF}(\alpha(i))/\beta$ determined by the link resp. properties of the boundary fibration $\partial Y\rightarrow S^1$?}\\

To answer this question, note that the set of spectral flows ${\rm SF}(\alpha(i))$ intrinsically depends on the geometry of $Y$, namely the circle action $\sigma|S^1$ acting on $\mathcal{H}''_0$, however the evaluation of $\sigma$ at $e^{2\pi i 1/\beta}$ amounts to an evaluation of a certain (fibrewise) Reeb-flow over the trivial boundary fibration $S^1\times K_f$, cf. Lemma \ref{lemmageom} (note that $K_f$ is a contact manifold is a natural way, see Corollary \ref{reeb}) on the subspace $\mathcal{K}_{X_0}$. On the other hand, by a Theorem of Scott-Wojciechowski (Theorem 2.11 in \cite{klein2}) the difference of the (reduced) eta-invariants $\tilde \eta(D,\Lambda(t))$ for $D$ on $Y$ (cf. Atiyah \cite{Atiyah}) with respect to the boundary conditions given at $t=1/\beta$ resp. $t=0$ can be determined by equating this evaluation on $\mathcal{K}_{Y_0}$ over the boundary and we get (this combines Theorem \ref{theorem2} 2., Lemma \ref{lemmageom}, the remark below that Lemma and Corollary \ref{reeb}):
\begin{theorem}\label{etabla}
$\tilde \eta(D,\Lambda(1/\beta))-\tilde \eta(D,\Lambda(0))=-\sum_{i=1}^\mu \{{\rm SF}(\alpha(i))/\beta\}' +\tau$.
\end{theorem}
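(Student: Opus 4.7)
The plan is to apply the Scott--Wojciechowski variation formula for eta-invariants (Theorem 2.11 in \cite{klein2}) to the pair of self-adjoint Fredholm extensions of $D$ determined by the boundary conditions $\Lambda(0)$ and $\Lambda(1/\beta)$. That formula expresses $\eta(D,\Lambda(1/\beta)) - \eta(D,\Lambda(0))$ as a boundary-localized invariant, essentially a regularized phase of the relative determinant of the \Calderon-type projectors at the two endpoints, which in the present adiabatic setting reduces to a spectral computation on $\partial Y$ involving only the restriction of the path $\sigma$ to the symplectic subspace $\mathcal{W}$ generated by $\mathcal{K}_{Y_0}$; the orthogonal complement in $\Lambda$ is left fixed by the path and so contributes nothing to the variation beyond absorbing into $\tau$.

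Next, by Lemma \ref{lemmageom} together with Corollary \ref{reeb}, the algebraic $S^1$-action $\sigma(e^{2\pi i t})$ acting on $\mathcal{K}_{Y_0}$ is geometrically reinterpreted as a fibrewise Reeb flow on the trivial cylindrical fibration $S^1\times K_f$, with the contact structure on $K_f$ and Reeb periods dictated by the quasihomogeneous degree $\beta$. Evaluation at $t=1/\beta$ thus corresponds to a partial-period displacement along the Reeb flow, and the boundary term produced by the Scott--Wojciechowski formula decomposes, on $H^n(F,\mathbb{C})\subset \mathcal{K}_{Y_0}$, into a sum indexed by the eigenvalues of $M\circ\pi_0$. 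Each such eigenvalue equals $l(\alpha(i))-1$, which by Theorem \ref{theorem1intro} is exactly ${\rm SF}(\alpha(i))/\beta$; integrating a partial circle orbit produces a fractional part in the standard way, yielding the main contribution $-\sum_{i=1}^\mu\{{\rm SF}(\alpha(i))/\beta\}'$, with the overall sign forced by the orientation and symplectic conventions used in defining $\gamma$ and $\Omega$.

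The constant $\tau$ collects those contributions independent of the quasihomogeneous weights: the eta-invariant of the tangential operator restricted to the orthogonal complement of $H^n(F,\mathbb{C})$ inside $\mathcal{K}_{Y_0}$, the eta-contribution of the trivial $S^1$-factor in $S^1\times K_f$, and any discrete jump corrections coming from eigenvalue crossings at the endpoints $t=0$ and $t=1/\beta$, which is where the remark following Lemma \ref{lemmageom} enters the argument. None of these depend on $i$ or on the specific basis $\{z^{\alpha(i)}\}$, so they can be safely packaged into a single scalar.

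The principal technical obstacle is to verify that the adiabatic decomposition of the \Calderon projector established by Nicolaescu \cite{Nicol2} and Kirk--Lesch \cite{lesch}, which underlies Theorem \ref{theorem1intro}, is compatible with the Scott--Wojciechowski formula at both endpoints, so that the eta-difference genuinely localizes onto $\mathcal{W}$ and is computable from the restricted action alone. One must in particular track the precise branch cuts implicit in $\{\cdot\}'$, a point sensitive to orientations on the Milnor fibre, the sign convention for the contact form on $K_f$ produced by Corollary \ref{reeb}, and the interaction between the path $\Lambda(t)$ and the extended $L^2$-solution subspace $\mathcal{K}_Y$ — and to check that, in the adiabatic limit, these interactions produce only the constant $\tau$ and not further $i$-dependent terms.
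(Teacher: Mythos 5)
Your route is the paper's route in outline: invoke the Scott--Wojciechowski theorem (Theorem 2.11 of \cite{klein2}), use the adiabatic decomposition of the Cauchy data spaces to localize the variation onto the finite-dimensional piece $\tilde W\simeq \mathcal{K}_{Y_0}$, and use Lemma \ref{lemmageom} together with Corollary \ref{reeb} to read the $t=1/\beta$ evaluation as the Reeb flow on the boundary. The genuine gap is in your treatment of the remainder term $\tau$. The Scott--Wojciechowski formula as used here computes $\tilde\eta(D_P)-\tilde\eta(D_{P_{Y_0}})$ \emph{against the actual \Calderon\ projector} of the trivial piece, not between two arbitrary boundary conditions; so one applies it once at $\Lambda(1/\beta)$ and once at $\Lambda(0)$ (both adiabatic-limit conditions) and subtracts, as in (\ref{etadiff45}). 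Because the branch of the logarithm is fixed as in (\ref{wind12}), the difference of the two ${\rm tr}\,{\rm log}$ terms is \emph{not} equal to ${\rm tr}\,{\rm log}$ of the relative unitary of the endpoints: the discrepancy is precisely the Maslov triple index $\tau_\mu\bigl(P_{L_{X_0}},P_{L_{X_0^\infty}},P_{\rho^*(L_{X_0^\infty}),\Lambda}\bigr)$ of Definition \ref{tripleindex}, and this \emph{is} the $\tau$ of the statement --- which is exactly how $\tau$ acquires the dependence asserted there (on $P_{Y_0}$, its adiabatic limit, and the Reeb evaluation at $1/\beta$). Your proposal instead describes $\tau$ as a collection of eta-type contributions (an eta-invariant of the tangential operator restricted to the complement of $H^n(F,\mathbb{C})$, an eta-contribution of the $S^1$-factor, endpoint jump corrections); none of these arise from the Scott--Wojciechowski identity, and the claim that the leftover is weight-independent and safely ``absorbed'' is not established by your argument.

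Relatedly, the extraction of the main term is not just ``eigenvalues of $M\circ\pi_0$ equal ${\rm SF}(\alpha(i))/\beta$, so a partial orbit gives fractional parts''. In the paper one needs that $\overline\sigma(1/\beta)$ acts as the identity on $\tilde W'\oplus F_\nu^-$, so that the only nontrivial contribution to $\frac{1}{2\pi i}{\rm tr}\,{\rm log}\bigl(\Phi(P_{\rho^*(L_{X_0^\infty}),\Lambda})\Phi(P_{L_{X_0^\infty}})^*\bigr)$ is a finite unitary whose eigenvalues are computed from Lemma \ref{mult} and Lemma \ref{Lagrangian} (note the doubling $e^{4\pi i d_j t}$ coming from conjugating the isometry $\phi(\Lambda_{X_0,t})$, which is why $\{2({\rm deg}(z^{\alpha})+\sum_i w_i)\}'$ appears in Theorem \ref{theorem2}(2)); the chosen logarithm (\ref{wind123}) then converts each eigenvalue into the modified fractional part $\{\cdot\}'$. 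So the statement is obtained by combining Theorem \ref{theorem2}(1)--(2) with Lemma \ref{lemmageom} and Corollary \ref{reeb}; without the triple-index identification and the explicit finite-dimensional eigenvalue computation, your argument leaves both the value and the asserted dependence of $\tau$, and the precise form of the fractional-part sum, unproved.
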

Here, $\{\cdot\}'$ is a certain modified fractional part and $\tau \in \mathbb{N}$ depends on $P_{Y_0}$ and its adiabatic limit and the evaluation of the Reeb-flow at $1/\beta$, note that ${\rm SF}(\alpha(i))/2\beta\in \mathbb{Z}$ if and only if the corresponding weight is an integer, that is, if $z^{\alpha(i)}$ represents an element in the kernel of $b$. So, the sum of the fractional parts of the weights $l(\alpha(i))$ resp. of the set ${\rm SF}(\alpha(i))/\beta$ are determined by the Reeb-flow on $K_f$ and the Cauchy-data space of the trivial bundle $Y_0$ only, furthermore they have a representation as a difference of eta-invariants on $Y$. Now (as mentioned above) the topological type of $f$, that is the homeomorphism type of the tuple $(\mathbb{C}^{n+1}, V_f=f^{-1}(0))$ is determined by the set ${\rm SF}(\alpha(i))$ for $n\geq 3$. This follows from the fact that the (integer) Seifert form 
\begin{equation}\label{seifertform}
S(a,b)=<V^{-1}a, b>
\end{equation}
where $<\cdot,\cdot>$ is Poincare duality, determines and is determined by the topological type and the set of weights $\{l(\alpha(i))\}_i$ determines the integer Seifert form (\cite{nem1}, \cite{varchenko2}, \cite{saeki}). Furthermore, by Theorem \ref{etabla} and Corollary \ref{toptypeinv}, we deduce:
\begin{folg}\label{folg1intro}
Assume $|{\rm SF}(\alpha(i))/\beta|<1$ for any $i=1,\dots,\mu$. Then the set of differences
\[
\eta_\Delta(i):=\tilde \eta(D,\Lambda_i(1/\beta))-\tilde \eta(D,\Lambda_i(0))\ {\rm mod}\ \mathbb{Z}, \  i=1,\dots,\mu,
\]
determines the spectrum of $f$. Hence, in this case the weights $\{l(\alpha(i))\}_i$ of $f$ and for $n\geq 3$ its topological type and Seifert form are determined by the evaluation of the Reeb-flow of $K_f$ on the restriction of $\mathcal{K}_{Y_0}$ to $K_f$, i.e. depend on the smooth Milnor fibre and the embedding of the link $K_f$ only (which is of course well-known for the Seifert form). On the other hand, if $n\leq 2$, the topological type of $f$ always determines the set $\eta_\Delta(i), \  i=1,\dots,\mu$.
\end{folg}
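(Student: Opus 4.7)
The plan is to apply Theorem \ref{etabla} not to the full path $\Lambda(t)$ but to each single-eigenvector loop $\Lambda_i(t)$, i.e.\ to the path obtained by rotating only $z^{\alpha(i)}\in H^n(F,\mathbb{C})$ and extending by the identity on the orthogonal complement inside $\mathcal{K}_{Y_0}$ and in $\Lambda$. Because the diagonal form of $M$ in the monomial basis of $M(f)$ makes the circle action $\sigma$ split as a direct sum over eigen-directions, the Scott--Wojciechowski-type formula underlying Theorem \ref{etabla} should isolate a single summand, yielding
\[
\eta(D,\Lambda_i(1/\beta))-\eta(D,\Lambda_i(0)) = -\{{\rm SF}(\alpha(i))/\beta\}' + \tau_i
\]
with $\tau_i\in\mathbb{Z}$ depending only on $P_{Y_0}$, its adiabatic limit, and the Reeb flow on $K_f$ at time $1/\beta$. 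Reducing modulo $\mathbb{Z}$ removes $\tau_i$ and gives $\eta_\Delta(i)\equiv -\{{\rm SF}(\alpha(i))/\beta\}'\pmod{\mathbb{Z}}$.

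Next, I would invoke the hypothesis $|{\rm SF}(\alpha(i))/\beta|<\tfrac{1}{2}$. Since ${\rm SF}(\alpha(i))/\beta = l(\alpha(i))-1$, the spectrum entry already lies in the fundamental domain $(-\tfrac12,\tfrac12)$ for $\mathbb{R}/\mathbb{Z}$, so its image under the modified fractional part $\{\cdot\}'$ is itself, and the mod-$\mathbb{Z}$ datum $\eta_\Delta(i)$ recovers ${\rm SF}(\alpha(i))/\beta$ uniquely. Collecting these for $i=1,\dots,\mu$ reconstructs the divisor ${\rm Sp}(f)=\sum_{\alpha(i)\in\Lambda}(l(\alpha(i))-1)$. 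From Corollary \ref{toptypeinv} the weights $\{l(\alpha(i))\}_i$, the topological type for $n\geq 1$, and via Nemethi--Varchenko--Saeki the integer Seifert form, all follow. The Reeb-flow interpretation is automatic: by construction the right-hand side of Theorem \ref{etabla}, hence each $\eta_\Delta(i)$ modulo $\mathbb{Z}$, depends only on the evaluation of the Reeb flow on the restriction of $\mathcal{K}_{Y_0}$ to $K_f$, i.e.\ on the smooth Milnor fibre and the embedding $K_f\hookrightarrow S^{2n+1}$.

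For the second assertion, in the range $n\leq 2$ I would simply compose two implications already in place. Saeki's result (cited in the excerpt) says the topological type determines the spectrum ${\rm Sp}(f)$; the first part of the proof says the spectrum determines the set $\{\eta_\Delta(i)\bmod\mathbb{Z}\}_{i=1,\dots,\mu}$. Chaining gives the statement. The main obstacle is really the very first step: one must verify that Theorem \ref{etabla}, applied to the single-eigenvector loop $\Lambda_i$, indeed contributes only the $i$-th summand and that the residual integer $\tau_i$ is well defined independently of the irrelevant directions. This reduces to checking that the $\gamma$-orthogonal projection $\pi_0$ respects the eigen-decomposition of $M$ and that the adiabatic-limit decomposition of the Calderon projector (Nicolaescu--Kirk--Lesch) is compatible with this splitting, which is plausible in view of the diagonality of $M$ in the chosen monomial basis but will require care.
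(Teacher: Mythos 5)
Your argument is correct and follows essentially the same route as the paper: the corollary is deduced exactly by applying the eta-difference formula of Theorem \ref{etabla} (i.e.\ Theorem \ref{theorem2}(2) together with Lemma \ref{lemmageom} and Corollary \ref{reeb}) to each single-eigenvector loop, killing the integer triple-index term mod $\mathbb{Z}$, using the hypothesis $|{\rm SF}(\alpha(i))/\beta|<\tfrac12$ to invert the modified fractional part, and then quoting Corollary \ref{toptypeinv} (Le--Ramanujam, Nemethi--Varchenko, Saeki) for the weights, topological type, Seifert form and the $n\leq 2$ converse. The "main obstacle" you flag at the end is in fact already handled in the paper, since Theorem \ref{theorem2}(2) is stated for an arbitrary subset $\Lambda'\subset\Lambda$, in particular for the singleton $\Lambda'=\{\alpha(i)\}$, so no extra compatibility check of the adiabatic decomposition with the eigen-splitting is needed beyond what is proved there.
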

{\it Remark.} Since the spectral numbers $\gamma_i= l(\alpha(i))-1\ {\rm mod}\ \mathbb{Z}$ determine the algebraic monodromy $h^*$ the above implies that the set $\eta_\Delta(i)$ determines $h^*$ without any assumption on the range of the $\gamma_i$ resp. the set ${\rm SF}(\alpha(i))$. If we would chose instead of $\mathcal{H}''$ the so-called canonical extension of $\mathcal{H}^n_{X/D}|D^*$ to $D$ (cf. Kulikov \cite{kulikov}) the associated extension of the topological Gauss-Manin connection would have eigenvalues in the interval $[0,1)$ and the associated set of spectral flows would be a priori determined by boundary data as above, but of course would not be equivalent to the spectrum of $f$.\\
Finally, since the above Reeb flow acts by isometries and the foliation induced by the Reeb vector field has closed curves as its leaves one can associate to it an $S^1$-action on the set of Lagrangians in $(L^2(E|\partial Y), \Omega)$ as described in (\ref{globalproj}) and a spectral flow ${\rm SF}(\sigma_{\mathcal{B}})$ and one may ask if the latter is related to the set ${\rm SF}(\alpha(i))$ from above. As a partial answer as we will see below one has (Theorem \ref{difforder}):
\begin{theorem}
If for some $m \in \mathbb{N}$ we have $0=\rho^{\beta\cdot m}\in \pi_0({\rm Diff}(F,\partial F))$ for $\rho \in {\rm Diff}(F,\partial F)$ representing $\sigma(1/\beta)$ in $\pi_0({\rm Diff}(F))$ under the forgetful map, then $SF(\sigma_{\mathcal{B}})=0$.
\end{theorem}
Interpreting $\rho \in {\rm Diff}(F,\partial F)$ as a representative of the 'geometric monodromy' of $Y$ we see that the non-vanishing of $SF(\sigma_{\mathcal{B}})$ obstructs the ($\beta$-th power of the) geometric monodromy to be of finite order in $\pi_0({\rm Diff}(F,\partial F))$. By the results of Seidel \cite{seidel} resp. \cite{klein3} the same is true for ${\rm SF}(\alpha(1)=0)$ in the 'symplectic category', that is setting $sf(\alpha=0)=\frac{1}{2}{\rm SF}(\alpha(1)=0)$, then if $n\geq 2$ and 
\begin{equation}\label{seidelcondition}
sf(\alpha=0)=m(f)=\sum_{i=1}^{\mu}\beta_i-\beta\neq 0,
\end{equation}
the symplectic monodromy $\rho \in \pi_0({\rm Symp}(F,\partial F,\omega))$ is of infinite order (note that the condition given in \cite{klein3} is $m(f)\notin \mathbb{Z}$). Now for links of isolated hypersurface singularities $(h^*)^{\beta}=id$ and $K_f$ being a rational homology sphere are equivalent to $V(\rho^\beta)=0$ and the latter implies by \cite{krylov} that $\rho^{4\beta}=id$ in $\pi_0({\rm Diff}(F,\partial F))$, while for $n=2,6$ we have $\rho^\beta=id$, so in the latter cases we have $SF(\sigma_{\mathcal{B}})=0$. We summarize these findings in 
\begin{folg}
Let $n\geq 2$. If $SF(\sigma_{\mathcal{B}})\neq sf(\alpha=0)$, then $\rho_s \in \pi_0({\rm Symp}(F,\partial F,\omega))$ is of infinite order. If in addition, $\partial F$ is a rational homology sphere, then the map $\pi_0({\rm Symp}(F,\partial F,\omega))\rightarrow \pi_0({\rm Diff}(F,\partial F))$ has an infinite kernel.
\end{folg}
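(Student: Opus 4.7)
The plan is to establish both assertions by contrapositive, assembling two ingredients already at hand: the preceding corollary (Lemma \ref{difforder}), which forces $SF(\sigma_{\mathcal{B}}) = 0$ whenever some $\rho^{\beta m} = 0 \in \pi_0({\rm Diff}(F,\partial F))$, and the Seidel/author criterion (\ref{seidelcondition}), which says under $n\geq 2$ that $sf(\alpha = 0) = m(f) \neq 0$ forces $\rho_s$ to be of infinite order in $\pi_0({\rm Symp}(F,\partial F,\omega))$. No new analytic input is required; the task is purely combinatorial-group-theoretic.

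For the first assertion I argue by contrapositive. Suppose $\rho_s$ has finite order in $\pi_0({\rm Symp}(F,\partial F,\omega))$. The forgetful map ${\rm Symp}(F,\partial F,\omega) \hookrightarrow {\rm Diff}(F,\partial F)$ sends $\rho_s$ to $\rho$, so $\rho$ has finite order in $\pi_0({\rm Diff}(F,\partial F))$; choosing $m$ so that $\beta m$ is a multiple of the order of $\rho$ gives $\rho^{\beta m} = 0$ in $\pi_0({\rm Diff}(F,\partial F))$, and Lemma \ref{difforder} then yields $SF(\sigma_{\mathcal{B}}) = 0$. On the other hand, by the contrapositive of (\ref{seidelcondition}), finite order of $\rho_s$ in $\pi_0({\rm Symp})$ under $n\geq 2$ forces $sf(\alpha = 0) = m(f) = 0$. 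Together these give $SF(\sigma_{\mathcal{B}}) = sf(\alpha = 0) = 0$, which is the contrapositive of the first claim.

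For the second assertion, assume additionally that $\partial F$ is a rational homology sphere. As recalled in the excerpt, this gives $(h^*)^\beta = {\rm id}$, hence $V(\rho^\beta) = 0$, whereupon Krylov's theorem forces $\rho^{4\beta} = 0 \in \pi_0({\rm Diff}(F,\partial F))$ unconditionally. Applying Lemma \ref{difforder} with $m = 4$ yields $SF(\sigma_{\mathcal{B}}) = 0$, so the hypothesis $SF(\sigma_{\mathcal{B}}) \neq sf(\alpha = 0)$ forces $sf(\alpha = 0) \neq 0$, and (\ref{seidelcondition}) then gives $\rho_s$ of infinite order in $\pi_0({\rm Symp})$. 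Consequently $\rho_s^{4\beta}$ is of infinite order in $\pi_0({\rm Symp})$ while its image under the forgetful map is $\rho^{4\beta} = 0$, so the infinite cyclic subgroup $\langle \rho_s^{4\beta}\rangle$ lies entirely inside the kernel of $\pi_0({\rm Symp}(F,\partial F,\omega)) \to \pi_0({\rm Diff}(F,\partial F))$. The only subtlety I foresee is the routine functoriality check that the forgetful map really does send the symplectic monodromy $\rho_s$ to the smooth monodromy $\rho$ and that Lemma \ref{difforder}'s hypothesis is stated for this same class; both are built into the conventions of the preceding sections, so the argument then goes through verbatim.
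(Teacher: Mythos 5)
Your argument is correct and takes essentially the same route as the paper: assuming $\rho_s$ has finite order, the forgetful map and Lemma \ref{difforder} give $SF(\sigma_{\mathcal{B}})=0$ while Seidel's criterion (\ref{seidelcondition}) gives $sf(\alpha=0)=0$, contradicting $SF(\sigma_{\mathcal{B}})\neq sf(\alpha=0)$ -- the paper arranges this as a contradiction and cites Theorem 2.10 of \cite{klein3} instead (whence its extra hypothesis $(\sum_i\beta_i-\beta)/\beta\notin\mathbb{Z}\setminus 0$, which your direct appeal to Seidel's theorem makes unnecessary, as the paper itself notes). Your explicit handling of the rational homology sphere case, via $V(\rho^\beta)=0$, Krylov's $\rho^{4\beta}=\mathrm{id}$ in $\pi_0({\rm Diff}(F,\partial F))$, Lemma \ref{difforder} with $m=4$, and the infinite cyclic subgroup generated by $\rho_s^{4\beta}$ inside the kernel, is precisely the argument the paper only sketches in its introduction.
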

Note that in \cite{klein1} (Section 4.1/4.2), we interpret $sf(\alpha=0)$ as a winding number along the boundary of a disk which lies in a closed submanifold of $Y$ whose intersection with each Milnor fibre is Lagrangian. Inspired by this, we propose a possible scheme of proving Seidel's result resp. generalizations to arbitrary isolated algebraic singularities using the vanishing of the eta-invariant in the presence of an orientation-reversing isometry.

\tableofcontents

\section{Spectral flow and Maslov index}
In the following, we will focus briefly on the (as we will see, closely related) notions of 'spectral flow' for continuous paths of boundary value problems for a Dirac-type operator and the Maslov index on the Fredholm boundary Grassmannian of the tangential operator. We will assume familiarity with the content of Section 2 of \cite{klein2} on boundary value problems of Dirac type operators and will sometimes explicitly refer to definitions and notation in it (see also \cite{lesch} for an equivalent introduction). We first define a notion of spectral flow for paths of (not necessarily bounded) closed Fredholm operators, denoted by $\mathcal{CF}^{sa}$, on an arbitrary (separable) Hilbert space $H$ and then specialize to the needed case, the presentation will follow in essence the very clear exposition \cite{bolephi}, which focuses mainly on the use of the Cayley transform, for a presentation of spectral flow in the bounded operator context, see \cite{boos}. \\
Let now $\mathcal{C}^{sa}$ be the set of closed self-adjoint operators on $H$, we define a metric on $\mathcal{C}^{sa}$ called the {\it gap metric} $\delta(T_1,T_2)$ for $T_1,T_2 \in  \mathcal{C}^{sa}$, which is given by letting $P_j$ denote the orthogonal projections onto the graphs of $T_j$ in $H\times H$ and taking the operator norm of the difference:
\[
\delta(T_1,T_2):=||P_1-P_2||.
\]
Now the philosophy is to use the gap metric to define a topology on $\mathcal{CF}^{sa}$ that makes an appropriate map (the Cayley transform) to a certain subset of the set of unitary (i.e. bounded) operators of $H$, continuous and to use well-known techniques for defining a winding number on unitary operators to carry this notion to $\mathcal{CF}^{sa}$. Note that one advantage of using the Cayley transform, that is the map $\hat \kappa:\mathcal{CF}^{sa}\rightarrow \mathcal{U}(H)$ induced by the map 
\[
\kappa:\mathbb{R}\to S^1\setminus\{1\}, x\mapsto \frac{x-i}{x+i}
\]
in contrary of using the Riesz map 
\[
F: \mathcal{C}^{\rm sa} \longrightarrow \mathcal{B}^{\rm sa}, \ T\mapsto F_T:=T(I+T^2)^{-1/2}
\] 
onto a subset of the bounded self-adjoint operators $\mathcal{B}^{\rm sa}$ as it was done in \cite{nico2}, is that continuity in the above gap metric, for which we will prove the Cayley transform to form an homeomorphism, can be established much easier than in the Riesz metric (the one for which the Riesz map is a homeomorphism onto its image), since the Riesz topology is strictly finer than the gap topology (see \cite{bolephi}). In any case one has the result (\cite{bolephi}):
\begin{theorem}\label{S1.1} 
(a) Let $\kappa$ be as introduced above, then $\kappa$ induces a homeomorphism
\begin{equation}\label{G1.2}
\begin{split}
\hat \kappa:& \mathcal{C}^{\rm sa}(H)\longrightarrow \left \{U\in \mathcal{U}(H)|U-I {\rm\  is\ injective }\right \}=:\mathcal{U}_{\rm inj},\\
&T\mapsto \hat \kappa(T)=(T-i)(T+i)^{-1}.
\end{split}
\end{equation}
More precisely, the gap metric is (uniformly) equivalent to the metric
$\tilde \delta$ defined by $\tilde\delta(T_1,T_2)=||\hat \kappa(T_1)-\hat \kappa(T_2)||$.\\
(b) The set $\mathcal{CF}^{\rm sa}=\left\{T\in \mathcal{C}^{\rm sa}|0\not\in {\rm spec}_{\rm ess} T\right\}
=\mathcal{C}^{\rm sa}\cap \hat \kappa^{-1}(\mathcal{U}_\mathcal{F})$ where
\[
\mathcal{U}_\mathcal{F}=\left\{U\in\mathcal{U}(H)|-1\not\in {\rm spec}_{\rm ess} U\right\},
\]
of (not necessarily bounded) self-adjoint Fredholm operators is open in $\mathcal{C}^{\rm sa}$ and its Cayley image 
\[
\hat \kappa(\mathcal{CF}^{\rm sa})=\mathcal{U}_\mathcal{F}\cap \mathcal{U}_{\rm inj}=:\mathcal{U}_{{\rm inj},\mathcal{F}}
\]
is dense in $\mathcal{U}_{\mathcal{F}}$.
\end{theorem}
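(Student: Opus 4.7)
I would split the argument into three stages: the set-theoretic bijection in (a), the uniform equivalence of the two metrics, and the Fredholm and density content of (b).

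For the bijection in (a), self-adjoint functional calculus yields $(T\pm i)^{-1}\in\mathcal{B}(H)$ of norm $\leq 1$, and a direct computation shows $\hat\kappa(T)$ is unitary. Injectivity of $\hat\kappa(T)-I$ is automatic: if $\hat\kappa(T)v=v$ and $w:=(T+i)^{-1}v\in\text{dom}(T)$, then $(T-i)w=(T+i)w$, whence $-2iw=0$ and $v=0$. The inverse is the assignment $U\mapsto T_U:=i(I+U)(I-U)^{-1}$ on $\text{dom}(T_U):=(I-U)H$. Normality of $U$ forces $\ker(U-I)=\ker(U^*-I)$, so $U\in\mathcal{U}_{\rm inj}$ is exactly the condition rendering $(I-U)H$ dense. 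Symmetry of $T_U$ is a direct computation, and the identities $(T_U+i)(I-U)y=2iy$, $(T_U-i)(I-U)y=2iUy$ show $T_U\pm i$ are surjective, so $T_U$ is self-adjoint with $\hat\kappa(T_U)=U$.

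For the equivalence of $\delta$ and $\tilde\delta$ I would parametrize the graph of $T=T_U$ by $Vy:=\tfrac12((I-U)y,\,i(I+U)y)$; the identity $\|(I-U)y\|^2+\|(I+U)y\|^2=4\|y\|^2$ (using $U^*U=I$) shows that $V$ is an isometry from $H$ onto $\Gamma(T)\subset H\oplus H$, so $P_T=VV^*$ works out, after expanding the four block products, to
\[
P_T=\frac14\begin{pmatrix}2I-(U+U^*)&i(U-U^*)\\ i(U-U^*)&2I+(U+U^*)\end{pmatrix},
\]
and this formula inverts to
\[
\hat\kappa(T)=(P_T^{(2,2)}-P_T^{(1,1)})-2i\,P_T^{(1,2)}.
\]
Since each block of $P_T$ is a degree-two polynomial in $U,U^*$ with $\|U\|=1$, the first identity gives $\delta(T_1,T_2)\leq C\,\tilde\delta(T_1,T_2)$ with absolute $C$; the second identity, together with the obvious bound on each block of $P_{T_1}-P_{T_2}$ by $\|P_{T_1}-P_{T_2}\|$, gives $\tilde\delta\leq 4\,\delta$.

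For (b), the identity $\mathcal{CF}^{\rm sa}=\hat\kappa^{-1}(\mathcal{U}_\mathcal{F})$ is a spectral mapping statement: extending $\kappa$ to a homeomorphism $\mathbb{R}\cup\{\infty\}\cong S^1$ with $\kappa(0)=-1$, one verifies $\specess(\hat\kappa(T))\setminus\{1\}=\kappa(\specess(T))$, whence $0\in\specess(T)\Leftrightarrow -1\in\specess(\hat\kappa(T))$. Openness of $\mathcal{U}_\mathcal{F}$ in $\mathcal{U}(H)$ is just the openness of the Fredholm set applied to $U+I$; openness of $\mathcal{CF}^{\rm sa}$ in $\mathcal{C}^{\rm sa}$ then follows from (a). For density, given $U\in\mathcal{U}_\mathcal{F}$ set $U_\epsilon:=e^{i\epsilon}U$; the essential spectrum is merely rotated, so $U_\epsilon\in\mathcal{U}_\mathcal{F}$ for all small $\epsilon$, and since $H$ is separable $\spec_p(U)$ is at most countable, so arbitrarily small $\epsilon>0$ can be chosen with $e^{-i\epsilon}\notin\spec_p(U)$; this is exactly the condition $\ker(U_\epsilon-I)=\{0\}$, placing $U_\epsilon$ in $\mathcal{U}_{{\rm inj},\mathcal{F}}$, and $\|U_\epsilon-U\|=|e^{i\epsilon}-1|\to 0$.

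The main obstacle will be the metric equivalence in (a): both directions rest on the closed form of $P_T$ above, and the honest bookkeeping (verifying $V$ is indeed an isometry onto $\Gamma(T)$, computing $V^*$ cleanly, and extracting explicit constants) is where most of the work lies. Once that is in place, the Fredholm identification and the density statement in (b) follow from the bijection of (a), classical openness of Fredholm operators in the operator norm, and countability of the point spectrum in a separable Hilbert space.
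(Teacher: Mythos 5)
This theorem is quoted in the paper from the reference \cite{bolephi} (Booss-Bavnbek--Lesch--Phillips); the paper itself gives no proof, so there is nothing internal to compare against. Your argument is correct and is essentially the argument of that cited source: the key point there, as in your write-up, is the closed formula expressing the graph projection $P_T$ as a quadratic expression in $U=\hat\kappa(T)$, $U^*$ (your $2\times 2$ block matrix is the right one, and the inversion $U=(P_T^{(2,2)}-P_T^{(1,1)})-2iP_T^{(1,2)}$ checks out), which yields Lipschitz bounds in both directions and hence the uniform equivalence of $\delta$ and $\tilde\delta$; the identification $\mathcal{CF}^{\rm sa}=\hat\kappa^{-1}(\mathcal{U}_{\mathcal F})\cap\mathcal{C}^{\rm sa}$ via the spectral mapping of $\specess$ under $\kappa$ (with $\kappa(0)=-1$), openness from the openness of the Fredholm set, and density by the rotation $U\mapsto e^{i\epsilon}U$ with $\epsilon$ avoiding the (countable) point spectrum are likewise the standard steps. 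The only items you compress are routine: that $T_U$ is well defined on $(I-U)H$ because $I-U$ is injective, that self-adjointness of $T_U$ follows from symmetry plus surjectivity of $T_U\pm i$ on a dense domain (which also gives closedness), and that the spectral-mapping claim for $\specess$ follows from pushing forward the spectral measure of $T$ under the homeomorphism $\kappa:\mathbb{R}\to S^1\setminus\{1\}$; all of these are standard and your sketches of them are sound, and note that the separability of $H$ you invoke for countability of the point spectrum is indeed assumed in the paper.
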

Note that (b) together with the fact (see \cite{bolephi}) that $\mathcal{B}^{\rm sa}$ is dense in $\mathcal{C}^{\rm sa}$ with respect to the gap metric, implies that $\mathcal{F}^{sa}$ (the bounded self-adjoint Fredholm operators on $H$) is dense in $\mathcal{CF}^{\rm sa}$ with respect to the gap metric. So in the Cayley picture, one has the following chain of dense inclusions
\begin{equation}\label{chain}
\hat \kappa(\mathcal{F}^{sa})\subset \mathcal{U}_{{\rm inj},\mathcal{F}}\subset \mathcal{U}_{\mathcal{F}}.
\end{equation}
Nevertheless, the topology of the three sets is very different, see \cite{bolephi}. Note  that the topology on $\mathcal{B}^{\rm sa}$ induced by the usual norm coincides with the topology induced by the gap metric, although the two metrics are not unitarily equivalent (see \cite{labr}). Note that in (\ref{chain}), $\mathcal{U}_{\mathcal{F}}$ is a classifying space for $K^1$ with the isomorphism $[S^1,\mathcal{U}_{\mathcal{F}}] \simeq K^1(S^1) \simeq \pi_1(\mathcal{U}_{\mathcal{F}})\simeq \mathbb{Z}$ given by the {\it winding number}, while $\mathcal{U}_{{\rm inj},\mathcal{F}}$ is not, see again \cite{bolephi}. To give the definition of spectral flow based on the Cayley transform, we need a definition of winding number. For this, we define 
\[
\mathcal{U}_{\rm tr}(H)=\{u \in \mathcal{U}(H)\mid U-I \rm{\ is\ trace\
class}\},\quad \mathcal{U}_{\mathcal{K}}(H)=\{u \in \mathcal{U}(H)\mid U-I \rm{\ is\ compact}\}.
\]
Then it is well-known that $\mathcal{U}_{\rm tr}(H)\hookrightarrow \mathcal{U}_{\mathcal{K}}(H)$ is a homotopy equivalence on the other hand it is shown in \cite{lesch}, that the inclusion $\mathcal{U}_{\mathcal{K}}(H) \rightarrow\mathcal{U}_{\mathcal{F}}(H)$ is a weak homotopy equivalence. As a consequence, giving a closed path $f:I\rightarrow \mathcal{U}_{\rm tr}(H)$ one sets
\[
{\rm wind}(f)= \frac{1}{2\pi i}\int_0^1 tr(f(t)^{-1}f'(t))dt
\]
which then extends to an group isomorphism 
\begin{equation}\label{sftrace}
{\rm wind}:\pi_1(\mathcal{U}_{\mathcal{F}}(H))\rightarrow
\mathbb{Z}.
\end{equation}
The latter definition can furthermore be (setting the appropriate conventions) extended to non-closed paths in $\mathcal{U}_{\mathcal{F}}(H)$ and leads to the following properties of ${\rm wind}$:
\begin{enumerate}
\item \textit{Path Additivity}: Let $f_1, f_2:[0,1]\to\mathcal{U}_{\mathcal{F}}(H)$
be continuous paths with $f_2(0)=f_1(1)$. Then
\[
{\rm wind}(f_1*f_2)= {\rm wind}(f_1)+{\rm wind}(f_2).
\]
\item \textit{Homotopy invariance} Let $f_1,f_2$ be continuous paths
in $\mathcal{U}_{\mathcal{F}}$. Assume that there is a homotopy
$H:[0,1]\times[0,1]\rightarrow \mathcal{U}_{\mathcal{F}}$
such that $H(0,t)=f_1(t), H(1,t)=f_2(t)$ and such that
${\rm dim \ ker}(H(s,0)+I), {\rm dim ker}(H(s,1)+I)$ are independent of $s$.
Then
${\rm wind}(f_1)={\rm wind}(f_2)$.
\item If $f:[0,1]\rightarrow \mathcal{U}_{\mathcal{F}}$ is a $C^1$--curve then
\begin{equation}
{\rm wind}(f)=\frac{1}{2\pi i}\left(\int_0^1{\rm tr}(f(t)^{-1}f'(t))dt
              -{\rm tr}({\rm log} f(1))+{\rm tr}({\rm log}f(0))\right),
\end{equation}
where the logarithm is normalized as ${\rm log}:\mathbb{C}\setminus \{0\}\rightarrow \mathbb{C}$ as
\begin{equation}\label{wind12}
 {\rm log}(re^{it})={\rm ln} \ r+i t, \quad r>0, -\pi<t\leq \pi.
\end{equation}
\end{enumerate}
After these explanations the definition of spectral flow for paths in
$\mathcal{CF}^{\rm sa}$ is straightforward:
\begin{Def}\label{S2.2} Let $f:[0,1]\rightarrow \mathcal{CF}^{\rm sa}(H)$ be a continuous
path. Then the {\em spectral flow} of $f$, ${\rm SF}(f)$ is defined by
\[
    {\rm SF}(f):={\rm wind}(\hat \kappa\circ f).
\]
\end{Def}
From the properties of $\hat \kappa$ and of the winding number one concludes
immediately:
\begin{prop}\label{S2.3}
The spectral flow ${\rm SF}$ is path additive and homotopy invariant in the following
sense: let $f_1,f_2:[0,1]\to\mathcal{CF}^{\rm sa}$ be continuous paths
and let
\[
H:[0,1]\times[0,1]\to\mathcal{F}^{\rm sa}
\]
be a homotopy such that $H(0,t)=f_1(t), H(1,t)=f_2(t)$ and such that
${\rm dim \ ker} H(s,0),$ ${\rm dim \ ker} H(s,1)$ are independent of $s$.
Then ${\rm SF}(f_1)={\rm SF}(f_2)$. In particular, ${\rm SF}$ is invariant under
homotopies leaving the endpoints fixed.
\end{prop}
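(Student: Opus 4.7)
The plan is to pull everything back through the Cayley transform and then quote the corresponding properties of the winding number listed just before Definition \ref{S2.2}. Concretely, by definition $\SF(f) = \mathrm{wind}(\hat\kappa \circ f)$, and by Theorem \ref{S1.1}(b) the Cayley transform $\hat\kappa$ is a homeomorphism of $\mathcal{CF}^{\mathrm{sa}}$ (gap metric) onto $\mathcal{U}_{\mathrm{inj},\mathcal{F}} \subset \mathcal{U}_{\mathcal{F}}$; in particular $\hat\kappa$ is continuous and maps continuous paths to continuous paths on which $\mathrm{wind}$ is defined.

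For path additivity, if $f_2(0) = f_1(1)$ then $(\hat\kappa \circ f_2)(0) = (\hat\kappa \circ f_1)(1)$, so we may form $\hat\kappa \circ (f_1 * f_2) = (\hat\kappa \circ f_1)*(\hat\kappa \circ f_2)$ and invoke path additivity of $\mathrm{wind}$ directly. For homotopy invariance, given $H:[0,1]\times[0,1] \to \mathcal{CF}^{\mathrm{sa}}$ (the statement as printed says $\mathcal{F}^{\mathrm{sa}}$ but one needs the continuous image to lie in $\mathcal{U}_{\mathcal{F}}$, for which $\mathcal{CF}^{\mathrm{sa}}$ suffices), continuity of $\hat\kappa$ promotes $H$ to a continuous homotopy $\hat H := \hat\kappa \circ H : [0,1]\times[0,1] \to \mathcal{U}_{\mathcal{F}}$ with $\hat H(0,\cdot) = \hat\kappa\circ f_1$ and $\hat H(1,\cdot) = \hat\kappa\circ f_2$. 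The key translation is the kernel identity: since $\kappa(0) = -1$, one has $T\phi = 0$ iff $\hat\kappa(T)\phi = -\phi$, so
\[
\dim\ker H(s,j) = \dim\ker(\hat H(s,j) + I), \qquad j = 0,1,
\]
and both sides are independent of $s$ by the hypothesis on $H$. Hence the homotopy invariance clause of $\mathrm{wind}$ applies to $\hat H$ and yields $\SF(f_1) = \SF(f_2)$.

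There is essentially no genuine obstacle; the only points to watch are that one is entitled to use the $\mathrm{wind}$ homotopy property on paths which are not necessarily closed (for this one uses the extension mentioned after (\ref{sftrace}) with the correction term involving $\mathrm{tr}\log f(0), \mathrm{tr}\log f(1)$, whose values are constrained precisely by the fixed-kernel-dimension hypothesis), and that the ambient space in the homotopy statement should be read as $\mathcal{CF}^{\mathrm{sa}}$ rather than $\mathcal{F}^{\mathrm{sa}}$, since we need $\hat\kappa \circ H$ to land in $\mathcal{U}_{\mathcal{F}}$. The final clause — invariance under endpoint-fixing homotopies — is the special case where $H(s,j)$ is constant in $s$ for $j = 0,1$, so the kernel dimensions are trivially constant.
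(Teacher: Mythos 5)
Your proposal is correct and follows the same route the paper takes: the paper also deduces the proposition immediately from the homeomorphism property of the Cayley transform (Theorem \ref{S1.1}) together with the path additivity and homotopy invariance of the winding number, the only substantive point being the translation $\dim\ker H(s,j)=\dim\ker(\hat\kappa(H(s,j))+I)$, which you verify via $\kappa(0)=-1$. Your remarks on the typo $\mathcal{F}^{\rm sa}$ versus $\mathcal{CF}^{\rm sa}$ and on the endpoint-fixed special case are accurate refinements of the same argument.
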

To apply the above construction of spectral flow to the special case of boundary value problems for Dirac operators $D$ on a Riemannian manifold $(X,g)$ with non-empty boundary $\partial X$, we need the following Theorem, again taken from \cite{bolephi}. We assume that $D$ is of the form $D=\gamma(\frac{\partial}{\partial x}+A)$ on a metric collar of $\partial X$ (cf. (7)-(8) of Section 2.1 in \cite{klein2}, (\ref{collar}) below), $D_P$ is the self-adjoint Fredholm extension of $D$ associated to $P \in {\rm Gr}(A)$, ${\rm Gr}(A)$ the self-adjoint Fredholm Grassmannian (Definition 2.2 and Theorem 2.3 and the discussion above these in \cite{klein2}):
\begin{theorem}\label{c:grass}
For fixed $D$ the mapping 
\[
{\rm Gr}(A) \ni P \mapsto D_P \in \mathcal{CF}^{\rm sa}(L^2(X;E))
\]
is continuous from the operator norm to the gap metric.
\end{theorem}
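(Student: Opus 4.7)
The plan is to exploit Theorem \ref{S1.1}(a), which says that the gap metric on $\mathcal{C}^{\rm sa}$ is uniformly equivalent to the pullback metric $\tilde\delta(T_1,T_2)=\|\hat\kappa(T_1)-\hat\kappa(T_2)\|$ under the Cayley transform. Since
\[
\hat\kappa(D_P) \;=\; I - 2i\,R_P, \qquad R_P := (D_P+i)^{-1},
\]
continuity of $P\mapsto D_P$ in the gap metric reduces to norm-continuity of the resolvent map $P \mapsto R_P \in \mathcal{B}(L^2(X;E))$, which is the substantive content to establish.

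Next, I fix $P_0 \in \Gr(A)$ and derive a Krein-type resolvent identity relating $R_P$ to $R_{P_0}$ for $P$ nearby in operator norm. Given $s \in L^2(E)$, set $u_0 := R_{P_0}s$ and $u := R_P s$. Then $(D+i)(u-u_0)=0$ with $u-u_0 \in H_1$, so $\phi := (u-u_0)|_{\partial X}$ lies in the Cauchy data space $\Lambda$ of $D+i$. The boundary conditions $Pu|_{\partial X}=0=P_0 u_0|_{\partial X}$ yield
\[
P\phi \;=\; -(P-P_0)(u_0|_{\partial X}).
\]
Denoting by $K : \Lambda \to H_1\cap \ker(D+i)$ the Poisson operator inverse to the restriction $r$, this gives
\[
R_P - R_{P_0} \;=\; -\,K\circ (P|_\Lambda)^{-1}\circ(P-P_0)\circ r\circ R_{P_0}.
\]
Combining the trace inequality $\|u_0|_{\partial X}\|_{H^{1/2}} \leq C\|u_0\|_{H_1}$, the standard elliptic estimate $\|R_{P_0} s\|_{H_1} \leq C\|s\|_{L^2}$, and the boundedness of $K$, this identity delivers $\|R_P - R_{P_0}\|_{L^2\to L^2} \leq C\|P-P_0\|_{\rm op}$ provided the middle factor is uniformly controlled.

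The main obstacle is precisely the last point: showing that $P|_\Lambda$ has a uniformly bounded inverse for $P$ in a small operator-norm neighborhood of $P_0$. Injectivity of $P|_\Lambda$ follows from $\ker(D_P+i)=0$, which holds because $D_P$ is self-adjoint; combined with the Fredholm pair condition $(P,P_Y)\in \Fred$ built into the definition of $\Gr(A)$, this gives that $P|_\Lambda$ is Fredholm with trivial kernel and, after a cokernel computation via the analogous argument for $D_P-i$, trivial cokernel, hence boundedly invertible. To propagate the bound to a neighborhood of $P_0$, I would invoke Kato's theory of close pairs of projections: whenever $\|P-P_0\|<1$ one has a canonical unitary $U_P$ on $L^2(\partial X)$ with $U_P P_0 U_P^{-1}=P$ and $\|U_P-I\|\leq C\|P-P_0\|$, allowing one to transport $(P_0|_\Lambda)^{-1}$ to an approximate inverse of $P|_\Lambda$ and then correct by a Neumann series. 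A subtle point deserving care is that $P-P_0$ is small only in the $L^2$-operator norm whereas $\Lambda$ naturally carries its $H^{1/2}$-topology; this is handled by exploiting that $\Gr(A)$ consists of order-zero pseudodifferential projections with the standard mapping properties established via the Calder\'on projector in \cite{booblee}, so that the same operator-norm smallness governs the estimates on all relevant Sobolev scales entering the identity above.
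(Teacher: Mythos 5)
A preliminary remark on comparison: the paper gives no proof of Theorem \ref{c:grass} at all --- it is imported verbatim from \cite{bolephi} --- so your argument can only be measured against the literature, not against an internal proof. Your overall route (use Theorem \ref{S1.1}(a) to reduce gap-continuity to norm-continuity of $P\mapsto R_P:=(D_P+i)^{-1}$, then the Krein-type identity $R_P-R_{P_0}=-K\,(P|_\Lambda)^{-1}(P-P_0)\,r\,R_{P_0}$, then uniform invertibility of $P|_\Lambda$ near $P_0$ via Kato's unitary and a Neumann series) is sound and is essentially the standard mechanism for such statements; the algebra of the identity is correct.

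The genuine gap is in your closing step. As written, your estimate chains through $H^{1/2}(\partial X)$: you bound $\|u_0|_{\partial X}\|_{H^{1/2}}$ and then need $(P-P_0)$ and $(P|_\Lambda)^{-1}$ controlled on that space; but the hypothesis only gives smallness of $\|P-P_0\|$ in the $L^2$-operator norm, and for order-zero pseudodifferential projections $L^2$-operator-norm closeness does \emph{not} imply $H^{1/2}$-operator-norm closeness (there is no gain of order anywhere in the composition, so closeness on one Sobolev level gives no quantitative control on another). Hence your final sentence, that ``the same operator-norm smallness governs the estimates on all relevant Sobolev scales'', is unjustified and in general false, and the proof as written does not close. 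The repair is to run the whole estimate at the $L^2(\partial X)$ level: $rR_{P_0}s\in H^{1/2}(\partial X)\subset L^2(\partial X)$ with $\|rR_{P_0}s\|_{L^2(\partial X)}\leq C\|s\|_{L^2}$, measure $(P-P_0)$ only in the $L^2$-operator norm, invert $P|_\Lambda$ on the $L^2$-closure of the Cauchy data space of $D+i$, and use that the Poisson operator of $D+i$ is bounded from that $L^2$-Cauchy data space into $H^{1/2}(X)\subset L^2(X)$. For that repaired version you still owe two inputs you only gesture at: injectivity of $P|_\Lambda$ on the $L^2$-closure requires the regularity theorem of \cite{brules} (weak solutions of $(D+i)u=0$ satisfying $P(ru)=0$ lie in $H_1$, hence in $\ker(D_P+i)=0$), and Fredholmness/closed range of $P|_\Lambda$ requires that $P$ forms a Fredholm pair with the Calder\'on projector of $D+i$, not of $D$; this does follow from the definition of ${\rm Gr}(A)$, but only after noting that the Calder\'on projectors at different spectral parameters differ by a pseudodifferential operator of order $-1$, hence by a compact operator. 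Also, surjectivity of $P|_\Lambda$ is obtained more directly than by your ``cokernel computation via $D_P-i$'': given a boundary datum $h\in{\rm im}\,P\cap H^{1/2}$, take any $w\in H_1$ with trace $h$, solve $(D+i)v=(D+i)w$ with $v\in\mathcal{D}(D_P)$ using surjectivity of $D_P+i$, and observe that $w-v$ is a solution whose trace is mapped by $P$ to $h$. With these repairs the argument closes and in fact yields local Lipschitz continuity, which is stronger than the stated theorem.
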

Thus continuous paths of suitable boundary conditions for $D$ lead to a well-defined spectral flow; one can extend these arguments to the case where $D_s$ itself varies by some parameter $s$, since we will not need this case in the following we refer to \cite{bolephi}.\\
Let in the following $(H, <\ ,\ >, \gamma)$ be a Hermitian symplectic Hilbert space, that is $(H, <\ ,\ >)$ is a separable Hilbert space, $\gamma:H\rightarrow H$ is an isomorphism that satisfies $\gamma^2=-Id_H, \gamma^*=-\gamma$ and the $\pm i$-eigenspaces $\mathcal{E}_{\pm i}$ of $\gamma$ have the same dimension (infinite if $H$ is infinite, compare Definition 2.8 of \cite{lesch}). To introduce the {\it Maslov index} on pairs of paths in ${\rm Gr}(A)$, we follow essentially Lesch and Kirk \cite{lesch}, Section 6. Let $H=L^2(E|_{\partial X})$ and denote (compare Definition 2.2 in \cite{klein2})
\[
{\rm Gr}^{(2)}_{\rm Fred}(H):=\left \{(P,Q)| \ P,Q\in {\rm Gr}(H), \ (P,Q) {\rm \  are\ a\
     Fredholm\ pair}\right \},
\]
where ${\rm Gr}(H)=\{P \in \mathcal{B}(H): P=P^*, P^2=P, \gamma P\gamma^*=I-P\}$. Note that if $P,Q \in {\rm Gr}(A)$, then $(P,Q)\in {\rm Gr}^{(2)}_{\rm Fred}(H)$ (cf. Definition 2.2 of ${\rm Gr}(A)$ in \cite{klein2}). Note further that one has the diffeomorphism (setting $H=\mathcal{E}_i\oplus \mathcal{E}_{-i}$, where $\mathcal{E}_{\pm i}:={\rm ker}(\gamma\mp i)$)
\[
{\rm Gr}^{(2)}_{\rm Fred}(H)\simeq  \mathcal {U}_{\mathcal{F}}(\mathcal{E}_{-i})\times
\mathcal{U}(\mathcal{E}_i,\mathcal{E}_{-i})
\]
given by $(P,Q)\mapsto (\Phi(P)\Phi(Q)^*, \Phi(P))$, where we refer to Lemma 2.7 in \cite{klein2} or (2.7) of \cite{lesch} for the definition of the map $\Phi:{\rm Gr}(H)\rightarrow \mathcal{U}(\mathcal{E}_i,\mathcal{E}_{-i})$. $\mathcal{U}(\mathcal{E}_i,\mathcal{E}_{-i})$ are the unitary pseudodifferential isomorphisms $\mathcal{E}_i\rightarrow \mathcal{E}_{-i}$, $\mathcal{U}_{\mathcal{F}}(\mathcal{E}_{-i})$ the unitary pseudodifferential isomorphisms of $\mathcal{E}_{-i}$ so that $-1$ is not in the essential spectrum, (cf. (13) of \cite{klein2}). Then the following definition can also be applied to finite-dimensional hermitian vector spaces, i.e. $H={\rm ker}\ A$:
\begin{Def}\label{MaslovWinding} For a continuous path $(f,g)$ in
${\rm Gr}^{(2)}_{\rm Fred}(H)$ we define the Maslov index as related to the winding number by the
equation
\begin{equation}\label{maslowind}
     {\rm Mas}(f,g)=-{\rm wind}(\Phi(f)\Phi(g)^*).
\end{equation}
\end{Def}
Using the picture of Lagrangian subspaces (note the fact that $(L_1,L_2)$ is Fredholm (resp. invertible)
if and only if the pair of projections $(I-P_{L_1},P_{L_2})$ is Fredholm (resp. invertible)), let
$(f,g):[0,1]\to{\rm Gr}^{(2)}_{\rm Fred}(H)$ be a continuous path, then the Maslov index ${\rm Mas} (f,g)$ is the algebraic count of how many times ${\rm ker}\ f(t)$ passes through ${\rm im}\ g(t)$ along the path.
Now it is relatively straightforward to prove (see \cite{lesch}) that the Maslov index obeys the following properties which actually {\it define} the Maslov index (cf. \cite{Nicol2}, \cite{CapLeeMil},\cite{CapLeeMil2}):
\begin{enumerate}
\item {\it Path Additivity:} Let $(f_j,g_j):[0,1]\rightarrow
{\rm Gr}^{(2)}_{\rm Fred}(H), j=1,2$,
be continuous paths with $f_2(0)=f_1(1), g_2(0)=g_1(1)$ then
\[ {\rm Mas}((f_1,g_1)*(f_2,g_2))={\rm Mas}(f_1,g_1)+{\rm Mas}(f_2,g_2).\]
\item {\it Homotopy Invariance:} Let $(f_j,g_j):[0,1]\rightarrow
{\rm Gr}^{(2)}_{\rm Fred}(H)$, $j=0,1$, such that $(f_0,g_0)$ is homotopic
$(f_1,g_1)$ relative  endpoints then
\[ 
{\rm Mas}(f_0,g_0)={\rm Mas}(f_1,g_1).
\]
More generally, suppose that $(F,G)$ is a homotopy so that the second parameter is the deformation parameter and so that ${\rm dim}({\rm ker} \ F(0,s)\cap {\rm im}\ G(0,s))$ and ${\rm dim}({\rm ker}\  F(1,s)\cap {\rm im}\ G(1,s))$ are independent of $s\in [0,1]$. Then $ {\rm Mas}(f_0,g_0)={\rm Mas}(f_1,g_1).$
\item {\it Normalization:} On one hand one finds that on paths with endpoints in the set of {\it invertible} pairs of projections ${\rm Gr}^{(2)}_*(H)\subset {\rm Gr}^{(2)}_{\rm Fred}(H)$ the Maslov index
induces a group isomorphism $\pi_1({\rm Gr}^{(2)}_{\rm Fred}(H),{\rm Gr}^{(2)}_*(H))\rightarrow \mathbb{Z}$, this in turn determines ${\rm Mas}$ on paths with endpoints in ${\rm Gr}^{(2)}_*(H)$ up to a sign. The sign-convention is as follows: if $(P,Q)\in {\rm Gr}^{(2)}_{\rm Fred}(H)$ then ${\rm Mas}(e^{t\gamma }Pe^{-t\gamma },Q)_{-\epsilon\leq t\leq \epsilon}={\rm dim} ({\rm ker}\ P\cap{\rm im}\ Q)$ for $\epsilon$ small enough. On the other hand if $(f,g):[0,1]\rightarrow {\rm Gr}^{(2)}_{\rm Fred}(H)$ is an arbitrary continuous
path then one can perturb the paths so that they become invertible paris at the end-points so that the forgoing convention can be applied: choose $\epsilon$ small enough such that the pairs $(e^{s\gamma }f(j)e^{-s\gamma },g(j))$ are invertible for $j=0,1, 0<s\leq\epsilon$. Then
\begin{equation} {\rm Mas}(f,g)= -{\rm wind}(\Phi(f)\Phi(g)^*)={\rm Mas}(e^{\epsilon \gamma }fe^{-\epsilon \gamma
},g).\end{equation}
\end{enumerate}
Before closing this section we give a version of a Maslov triple index in the special case which is needed in this article. For this note again that for $P\in {\rm Gr}(A)$ one has $-1 \notin {\rm spec}_{\rm ess}(\Phi(P)\Phi(P_X)^*)$, thus $-1$ is an isolated point in the spectrum of $\Phi(P)\Phi(P_X)^*$, so that we can choose a holomorphic branch of the logarithm which coincides on ${\rm spec}(\Phi(P)\Phi(P_X)^*)$ with ${\rm log}:\mathbb{C}\setminus \{0\}\rightarrow \mathbb{C}$ defined (already above) as
\[
 {\rm log}(re^{it})={\rm ln\ r}+i t, \quad r>0, -\pi<t\leq \pi.
\]
The form of the triple index given below is actually a consequence of a more general definition involving a certain 'double index' (see \cite{lesch}), we take the following as a definition for the case $P,Q,R\in{\rm Gr}(A)$ such that $P-Q,Q-R$ are trace class.
\begin{Def}\label{tripleindex}
Let $P,Q,R\in{\rm Gr}(A)$ such that $P-Q,Q-R$ are trace class. Then one defines the {\it Maslov triple index} of $P,Q,R$ as
\begin{equation}
\begin{split}
   \tau_\mu(P,Q,R)=\frac{1}{2\pi i}\bigl(&{\rm tr}\ {\rm log}(\Phi(P)\Phi(Q)^*)+{\rm tr}\ {\rm log}(\Phi(Q)\Phi(R)^*)\\
&-{\rm tr}\ {\rm log}(\Phi(P)\Phi(R)^*)\bigr).
 \end{split}
\end{equation}
\end{Def}
The next property will be important below: the homotopy invariance of the triple index.
\begin{lemma}[\cite{lesch}]\label{triplehomotopy}
Let $P,Q,R:[0,1]\rightarrow  {\rm Gr}(H)$ be paths in ${\rm Gr}(H)$ so that $(P,Q), (Q,R), (P,R)$ map into ${\rm Gr}_{\rm Fred}^{(2)}(H)$ and two of the differences $P-Q, Q-R, P-R$ are trace class. Suppose further that ${\rm dim}({\rm ker} \ P(t)\cap {\rm im}\ Q(t))$, ${\rm dim}({\rm ker} Q(t)\cap {\rm im}\ R(t))$, and ${\rm dim}({\rm ker} \ P(t)\cap {\rm im}\ R(t))$ are independent of $t$. Then
\[ 
\tau_\mu(P(0),Q(0),R(0))= \tau_\mu(P(1),Q(1),R(1)).
\]
\end{lemma}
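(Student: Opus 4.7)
The plan is to verify that $\tau_\mu(P(t),Q(t),R(t))$ is a continuous, $\mathbb{Z}$-valued function of $t\in[0,1]$; since $[0,1]$ is connected, it must then be constant. First, observe that if two of $P-Q$, $Q-R$, $P-R$ are trace class then so is the third, e.g.\ $P-R=(P-Q)+(Q-R)$. Consequently all three unitaries
\[
U(t)=\Phi(P(t))\Phi(Q(t))^*,\quad V(t)=\Phi(Q(t))\Phi(R(t))^*,\quad W(t)=\Phi(P(t))\Phi(R(t))^*
\]
differ from the identity by trace class operators, and each of the three $\operatorname{tr}\log$ terms appearing in Definition~\ref{tripleindex} is well-defined with respect to the branch of the logarithm fixed above.

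The second step is to prove that $\tau_\mu$ is always integer-valued on admissible triples. Since $W=UV$, multiplicativity of the Fredholm determinant on trace class perturbations of $I$ gives $\detf(W)=\detf(U)\detf(V)$. Combined with $\exp(\operatorname{tr}\log(X))=\detf(X)$, this identity yields
\[
\operatorname{tr}\log(U)+\operatorname{tr}\log(V)-\operatorname{tr}\log(W)\in 2\pi i\,\mathbb{Z},
\]
i.e.\ $\tau_\mu(P,Q,R)\in\mathbb{Z}$ for every admissible triple, in particular for $t=0$ and $t=1$, and in fact for every $t$ along the path.

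For continuity, the key observation is that $\dim(\ker P(t)\cap\operatorname{im} Q(t))$ coincides with the multiplicity of $-1$ as an eigenvalue of $U(t)$, and analogously for the other two pairs. By continuous dependence of the spectrum of a Fredholm family of unitaries on the parameter, a genuine crossing of any eigenvalue past $-1$ would force, at the crossing instant, a transient increase of the $-1$-multiplicity; the assumption that all three intersection dimensions are independent of $t$ thus precludes such crossings for each of $U$, $V$, $W$. Consequently the fixed branch of the logarithm extends continuously along each of the three paths of unitaries, making every $\operatorname{tr}\log$ term, and hence $\tau_\mu(P(t),Q(t),R(t))$, continuous in $t$. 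A continuous $\mathbb{Z}$-valued function on $[0,1]$ is constant, which is the claim.

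The main obstacle I anticipate lies in making the continuity step rigorous: one has to rule out 'balanced' passages of eigenvalues through $-1$ where one eigenvalue enters while another leaves. The point is that eigenvalues of $U(t)$ are \emph{continuous} functions of $t$, so at the crossing instant the relevant eigenvalues sit at $-1$ simultaneously, raising the multiplicity above the constant value—contradicting the hypothesis. Translating this intuitive picture into honest trace-norm estimates for $U(t)-I$ along the path is the technical core, and is essentially the content of the spectral stability arguments in \cite{lesch}; once available, continuity of $\operatorname{tr}\log$ reduces to continuity of the underlying Fredholm determinants together with the fact that $\operatorname{tr}\log$ can only jump where an eigenvalue crosses the branch cut.
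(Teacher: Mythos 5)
Your integrality step is sound: $W=UV$, multiplicativity of the Fredholm determinant and $\exp(\operatorname{tr}\log X)=\operatorname{det}_{\rm F}(X)$ do give $\tau_\mu\in\mathbb{Z}$, and that is the standard argument. The gap is in the continuity step, exactly at the point you defer as "technical". The hypotheses give only norm-continuous paths in ${\rm Gr}(H)$ whose differences are trace class at each fixed $t$; they do \emph{not} give trace-norm continuity of $t\mapsto U(t)-I$. Constancy of $\dim\ \ker(U(t)+I)$ does prevent eigenvalues from crossing $-1$, but that is not the only way $\operatorname{tr}\log U(t)$ can jump: the eigenvalues of $U(t)$ accumulate at $1$, and infinitely many small eigenvalues can carry a uniformly positive contribution to $\operatorname{tr}\log$ that disappears in the norm limit. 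Concretely, on $\mathcal{E}_{-i}=\ell^2$ let $U(t)$ be diagonal with eigenvalues $e^{i\theta_n(t)}$, $\theta_n(t)=n^{-1}g(nt-1)$ for a fixed bump function $g\geq 0$ supported in $(0,1)$: then $U(t)-I$ is finite rank for every $t$, $t\mapsto U(t)$ is Lipschitz in operator norm, no eigenvalue ever comes near $-1$, and yet $\operatorname{tr}\log U(t)\to i\int_0^1 g(s)(1+s)^{-1}ds\neq 0=\operatorname{tr}\log U(0)$ as $t\to 0^+$. Taking $\Phi(P(t))=U(t)\Phi(Q)$ with $Q=R$ constant yields paths satisfying every hypothesis of the Lemma (all three intersection dimensions are identically $0$, two of the differences are trace class), for which your claim that "every $\operatorname{tr}\log$ term is continuous" fails; the conclusion survives in this example only because the two discontinuous terms cancel. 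So the trace-norm estimates you attribute to \cite{lesch} are not merely a technical refinement: they are false under the stated hypotheses, and they are not what \cite{lesch} proves.

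The paper avoids this by a different mechanism: it quotes from \cite{lesch} the cocycle identity $\tau_\mu(P(0),Q(0),R(0))-\tau_\mu(P(1),Q(1),R(1))={\rm Mas}(P,Q)+{\rm Mas}(Q,R)-{\rm Mas}(P,R)$, in which the right-hand side involves Maslov indices of the three paths defined through the winding number on $\mathcal{U}_{\mathcal{F}}$, i.e.\ homotopy-theoretically rather than by integrating a trace along the path; the constant-intersection-dimension hypothesis then forces each of these Maslov indices to vanish (by the stronger form of homotopy invariance, each path of pairs has the same Maslov index as a constant path). To rescue your scheme you would have to either add trace-norm continuity of the differences as an extra hypothesis (then your term-by-term argument is fine), or show directly that the integer $\tau_\mu(P(t),Q(t),R(t))$ --- rather than its three individual summands --- is locally constant in $t$; the latter is in substance the cocycle identity, which brings you back to the paper's proof.
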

\begin{proof} As is shown in \cite{lesch} one has the following 'cocycle property' of the Maslov index
\begin{equation}\begin{split}
    \tau_\mu&(P(0),Q(0),R(0))-\tau_\mu(P(1),Q(1),R(1))\\
      &={\rm Mas}(P,Q)+{\rm Mas}(Q,R)-{\rm Mas}(P,R).
        \end{split}
\end{equation}
Then the claim follows immediately from the homotopy invariance of the Maslov index.
\end{proof}
To close this section we cite a theorem relating Maslov index and spectral flow which is originally due to Nicolaescu \cite{Nicol2} but which we give in the slightly generalized form proven by Kirk and Lesch in (\cite{lesch}, Theorem 7.5):
 \begin{theorem}\label{nicolaescu} Let $X$ be a manifold with boundary  and $D(t),
a\leq t\leq b,$ a smooth family of Dirac operators. We assume that in a collar of
the boundary $D$ takes the form $\gamma(\frac{d}{dx}+A(t))$ as before. Let
$P(t)\in{\rm Gr}(A(t))$ be a smooth family. Denote by $P_X(t)$ the Calderon projectors of $D(t)$, and
$L_X(t)={\rm im} P_X(t)$ the Cauchy data spaces. Then
\[ {\rm SF}(D_{P(t)}(t))_{t\in [a,b]}=  {\rm Mas}(P(t),P_X(t))_{t\in[a,b]}= {\rm Mas}({\rm ker}
P(t), L_X(t))_{t\in [a,b]}.\]
\end{theorem}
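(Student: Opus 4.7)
The second equality $\mathrm{Mas}(P(t),P_X(t))=\mathrm{Mas}(\ker P(t), L_X(t))$ is essentially tautological: the identification between pairs of projections and pairs of Lagrangians recalled just before Definition \ref{MaslovWinding} sends a Fredholm pair of projections $(P,P_X)$ to a Fredholm pair of Lagrangians $(\ker P,\im P_X)$, and the unitary $\Phi(P)\Phi(P_X)^*$ used to define both Maslov indices via winding is the same in either picture. So the substantive content is the first equality $\SF(D_{P(t)})=\mathrm{Mas}(P(t),P_X(t))$.

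For the first equality, the plan is a partition-plus-homotopy argument combined with a finite-dimensional reduction. Path additivity (Proposition \ref{S2.3} for $\SF$, and the analogous axiom for $\mathrm{Mas}$) allows me to choose a subdivision $a=t_0<\dots<t_N=b$ so fine that on each $[t_i,t_{i+1}]$ there is a constant $\eps_i>0$ with $\pm\eps_i\notin\spec D_{P(t)}(t)$ throughout. Since each $D_{P(t)}$ has compact resolvent (by the Brüning--Lesch--Cheeger result cited in the introduction), the spectral projection onto $[-\eps_i,\eps_i]$ has constant finite rank and varies continuously with $t$, so $\SF$ on the subinterval equals the algebraic count of eigenvalues of the resulting finite-dimensional self-adjoint family crossing zero. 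The key bridge to the Maslov side is the observation that $\mu\in\spec D_{P(t)}(t)$ with eigenvector $u$ if and only if $u|_{\partial X}\in\ker P(t)\cap L_X(t,\mu)$, where $L_X(t,\mu)$ is the Cauchy data space of $D(t)-\mu$; in particular zero eigenvalues correspond to nontrivial intersections $\ker P(t)\cap L_X(t)$, which is exactly what $\mathrm{Mas}$ counts.

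Having reduced both invariants to a finite-dimensional symplectic intersection count on each subinterval, I would invoke homotopy invariance to deform the pair $(P(t),P_X(t))$ through $\mathrm{Gr}^{(2)}_{\rm Fred}(H)$, keeping the dimensions $\dim(\ker P(t_j)\cap\im P_X(t_j))$ fixed at $j=i,i+1$, into a standard path with finitely many transverse crossings of the form $(e^{s\gamma}P_0 e^{-s\gamma},P_X^{(0)})$. For such a model path the normalization axiom gives $\mathrm{Mas}(e^{s\gamma}P_0 e^{-s\gamma},P_X^{(0)})_{-\eps\leq s\leq\eps}=\dim(\ker P_0\cap\im P_X^{(0)})$, while an explicit computation of the one-parameter family $D_{P(s)}$ shows the same number of eigenvalues crossing zero upward. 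Matching the signs via the Cayley picture then yields $\SF=\mathrm{Mas}$ on each subinterval, and summing recovers the theorem.

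The main obstacle will be two intertwined technical points: continuity of the twisted Cauchy data family $L_X(t,\mu)$ in $(t,\mu)$ near $\mu=0$, and the sign-matching between $\SF$ (defined via $\hat\kappa$ and winding in $\mathcal{U}_\mathcal{F}$) and $\mathrm{Mas}$ (defined via $\Phi$ and winding of $\Phi(P)\Phi(P_X)^*$). Concretely, one has to exhibit a homotopy in $\mathcal{U}_\mathcal{F}$ between $\hat\kappa(D_{P(t)})$ and $\Phi(P(t))\Phi(P_X(t))^*$ whose $(-1)$-eigenspace dimension is constant at the endpoints, so that the homotopy-invariance of the winding number forces the two integers to agree. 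Following Kirk and Lesch \cite{lesch}, this is achieved by a Poisson-operator analysis of the resolvent of $D_{P(t)}$ together with a symplectic reduction to the finite-dimensional subspace of relevant boundary data; once this is in place, the remaining identity is a direct linear-algebra computation in a finite-dimensional symplectic vector space.
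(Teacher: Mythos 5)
You should note first that the paper contains no proof of Theorem \ref{nicolaescu} at all: it is quoted verbatim from Nicolaescu \cite{Nicol2} in the slightly generalized form established by Kirk and Lesch \cite{lesch}, so there is no internal argument to compare yours with; I can only measure your sketch against those sources. Your reduction of the second equality to the projection/Lagrangian dictionary is fine, and your ``bridge'' observation --- that $\mu\in\mathrm{spec}\,D_{P(t)}(t)$ with eigenvector $u$ corresponds (via unique continuation, which you should state) to $u|_{\partial X}\in\mathrm{ker}\,P(t)\cap L_X(t,\mu)$ with $L_X(t,\mu)$ the Cauchy data space of $D(t)-\mu$ --- is indeed the correct starting point of both published proofs, as is the subdivision argument using Theorem \ref{c:grass} and path additivity.

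The genuine gap is in your main deformation step. You propose to homotope the pair $(P(t),P_X(t))$ inside $\mathrm{Gr}^{(2)}_{\rm Fred}(H)$ to a model path of the form $(e^{s\gamma}P_0e^{-s\gamma},P_X^{(0)})$ and to conclude by the normalization axiom on the Maslov side and ``an explicit computation of $D_{P(s)}$'' on the spectral flow side. Homotopy invariance does control ${\rm Mas}$ under such a deformation, but ${\rm SF}(D_{P(t)})$ is not a function of the abstract pair of projections: it depends on the operators $D(t)$, and $P_X(t)$ enters only because it happens to be the Calderon projector of $D(t)$. Once you replace $P_X(t)$ by a standard $P_X^{(0)}$ that is no longer the Calderon projector of any operator in your family, there is no boundary value problem whose eigenvalue crossings your model computation could be counting, so the claimed matching of the two sides for the model path is not meaningful. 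To repair this you must either (a) homotope only the $P(t)$-factor, rel constant endpoint kernel dimensions, keeping $P_X(t)$ the actual Calderon projectors --- then Proposition \ref{S2.3} controls ${\rm SF}$ and the ${\rm Mas}$ homotopy axiom controls the other side --- and then prove the local crossing identity ${\rm SF}(D_{e^{s\gamma}P_0e^{-s\gamma}})_{-\epsilon\le s\le\epsilon}=\dim(\mathrm{ker}\,P_0\cap L_X)$ for the actual operator, including the sign; or (b) follow \cite{lesch} and compare $\hat\kappa(D_{P(t)})$ directly with $\Phi(P(t))\Phi(P_X(t))^*$ as paths of unitaries via the Poisson operator. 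In either route, this local crossing-form/sign identity is the analytic heart of the theorem, and in your plan it is exactly the step that is deferred to the literature rather than argued, so the proposal as written identifies the right ingredients but does not yet constitute a proof.
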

Note that it is essential here that $\gamma$ is assumed to be constant.

\section{Variation structure and Spectral flow}\label{monodr}
Let  $f \in \mathbb{C}[z_0,\dots,z_n]$ be quasihomogeneous, i.e. there are integers $\beta_0, \dots\beta_n, \beta > 0$ such that $f(z^{\beta_0} z_0,\dots,z^{\beta_n} z_n)=z^\beta f(z_0,\dots,z_n)$, the weighted circle action $\sigma$ on $\mathbb{C}^{n+1}$
given by
\begin{equation}\label{weightedfirst}
\sigma(t)(z_0,\dots, z_n)=(e^{2\pi i t\beta_0}z_0,\dots,e^{2\pi i t\beta_n}z_n)
\end{equation}
preserves the Milnor fibres, its $1/\beta$-evaluation is isotopic to the geometric monodromy $g$ of the fibration. As mentioned in the introduction (cf. also \cite{klein3}), considering the Milnor fibration of $f$ with Milnor fibre $M$ (see below) as a symplectic fibration, Seidel \cite{seidel} shows:
\begin{theorem}\label{seidelthm}
For $n \geq 2$ the symplectic monodromy $\rho$ defines an element of infinite order in $\pi_0({\rm Symp}(M,\partial M,\omega))$ if the sum of the weights $w_i=\beta_i/\beta$ is not equal to one.
\end{theorem}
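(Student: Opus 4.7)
The plan is to construct a graded shift invariant of $\rho$, compute it explicitly in terms of the weights $w_i$, and invoke Seidel's Floer-theoretic machinery to obstruct finite order in $\pi_0({\rm Symp}(M, \partial M, \omega))$.

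First I exploit that the smooth Milnor fibre $M = f^{-1}(\epsilon) \cap B$, being a smooth affine hypersurface in $\mathbb{C}^{n+1}$, has trivial canonical bundle: the Poincar\'e residue
\[
\Omega_M := {\rm Res}\bigl(dz_0 \wedge \dots \wedge dz_n / (f - \epsilon)\bigr)
\]
is a nowhere-vanishing holomorphic volume form. Hence $c_1(M) = 0$, and since by \cite{milnor} $M$ is $(n-1)$-connected, for $n \geq 2$ one has $\pi_1(M) = 0$. Therefore $M$ admits a canonical $\mathbb{Z}$-grading in Seidel's sense, yielding a graded symplectic mapping class group $\pi_0({\rm Symp}^{gr}(M, \partial M, \omega))$ that is a central $\mathbb{Z}$-extension of the ungraded one and carries a grading-shift homomorphism $\mathrm{sh}$ to $\mathbb{Z}$.

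The key computation is direct: from $\sigma(t)^*(dz_0 \wedge \dots \wedge dz_n) = e^{2\pi i t \sum_i \beta_i}(dz_0 \wedge \dots \wedge dz_n)$ together with $\sigma(t)^* df = e^{2\pi i t \beta} df$, the residue formula gives
\[
\sigma(1/\beta)^* \Omega_M = e^{2\pi i (\sum_i w_i - 1)} \Omega_M.
\]
Continuously lifting the path $\sigma(t)_{t \in [0, 1/\beta]}$ from the identity grading then produces a distinguished graded lift $\widetilde\rho$ of $[\rho] = [\sigma(1/\beta)]$ whose phase rotation of $\Omega_M$, equivalently $\mathrm{sh}(\widetilde\rho)$ when honestly integer-valued, equals $\sum w_i - 1 \neq 0$.

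The main obstacle is promoting this non-vanishing from the graded to the ungraded mapping class group. Elementary descent of $\mathrm{sh}$ only produces an $\mathbb{R}/\mathbb{Z}$-valued invariant on $\pi_0({\rm Symp}(M, \partial M, \omega))$, detecting the weaker condition $\sum w_i \notin \mathbb{Z}$ of \cite{klein3}: a relation $\rho^k = {\rm id}$ in the ungraded $\pi_0$ only forces $k\,\mathrm{sh}(\widetilde\rho) \in \mathbb{Z}$. To obtain Seidel's sharper $\sum w_i \neq 1$, I would invoke his interpretation of $\mathrm{sh}$ as the intrinsic degree of the induced autoequivalence $\rho_*$ of the $\mathbb{Z}$-graded Fukaya category $\mathcal{F}(M)$, combined with the presentation of $\rho$ as a product of Dehn twists along a distinguished graded collection of vanishing cycles generating $\mathcal{F}(M)$ (available for $n \geq 2$ via the Lefschetz fibration structure on the Milnor fibration): since the identity class in $\pi_0({\rm Symp})$ lifts uniquely up to isomorphism to the identity functor on $\mathcal{F}(M)$, the relation $\rho^k \simeq {\rm id}$ would force $k \cdot \mathrm{sh}(\widetilde\rho) = 0$ in $\mathbb{Z}$, hence $\sum w_i = 1$. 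Executing this categorical rigidity step, rather than the elementary weight calculation, is the genuine technical heart of Seidel's proof, and it is precisely what the spectral-flow formalism of the present paper replaces by a PDE-theoretic computation.
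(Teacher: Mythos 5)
You should first be aware that the paper does not prove Theorem \ref{seidelthm} at all: it is imported verbatim from Seidel \cite{seidel}, and the only gloss given is that Seidel's key tool is the Maslov-type number $m_f=\sum_i w_i-1$, obtained by evaluating the Maslov class of $(TM,\omega)$ (in the trivialization induced by $\mathbb{C}^{n+1}$) on the path of Lagrangian tangent planes at a boundary point $x\in\partial M$ swept out by the weighted circle action; the paper's own contribution is only to re-express the condition $m_f\neq 0$ as non-vanishing of a spectral flow (Theorem \ref{theorem2}(1), Corollary \ref{seidelfolg}). Your first steps are consistent with that picture: triviality of the canonical bundle via the residue form, $(n-1)$-connectedness giving $\pi_1(M)=0$ for $n\geq 2$, and the phase computation $\sigma(1/\beta)^*\Omega_M=e^{2\pi i(\sum_i w_i-1)}\Omega_M$ (tracking the family over the path) are all correct and reproduce the number $m_f$.

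The genuine gap is in the passage from this computation to $\pi_0(\mathrm{Symp}(M,\partial M,\omega))$, and your proposed repair does not close it. First, $\sigma(1/\beta)$ does not fix $\partial M$ pointwise, so it is not itself a representative of the rel-boundary monodromy class; one must compose with a correction supported near the boundary, and since $\sigma(1/\beta)$ has finite order $\beta$ as a symplectomorphism of $M$ without boundary constraint, the entire content of the theorem is concentrated in how the grading interacts with that boundary correction. Your ``distinguished graded lift of $[\rho]=[\sigma(1/\beta)]$'' obtained by lifting $\sigma(t)$ ignores exactly this, and its ``shift'' $\sum_i w_i-1$ is in general not an integer, so it is not yet the grading datum of any element of the rel-boundary graded group. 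Second, the categorical step you invoke is circular: if $\rho^k$ is ungraded-isotopic to the identity rel boundary, then any graded lift of $\rho^k$ is graded-isotopic to some shift $[s]$, and the claim that ``the identity class lifts uniquely up to isomorphism to the identity functor'' is precisely the assertion $s=0$ that has to be proved; moreover generation by a distinguished collection of vanishing cycles concerns the Fukaya--Seidel category of a Morsification rather than $\mathcal{F}(M)$, and none of this machinery is needed. Seidel's actual argument is soft: it stays within graded symplectic automorphisms constrained to be trivial near $\partial M$, using the contact trivialization of the boundary fibration to build the corrected representative and extracting an $\mathbb{R}$-valued (not merely $\mathbb{R}/\mathbb{Z}$-valued) boundary invariant equal to $k\,m_f$ for $\rho^k$, which must vanish if $\rho^k$ is isotopic to the identity rel boundary. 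As written, your proposal establishes only the weaker statement corresponding to $\sum_i w_i\notin\mathbb{Z}$ (the condition of \cite{klein3}), and the step promoting it to $\sum_i w_i\neq 1$ is asserted rather than proved.
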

Note that the condition $\sum_{i=1}^\mu \beta_i/\beta\neq 1$ is a sufficient but not necessary condition, as Seidel \cite{seidel} shows. An important tool in the proof of the above is the definition of a Maslov-type number $m_f$ (see \ref{seidelcondition}) associated to a quasihomogeneous polynomial $f$. This number counts loosely speaking the number of times (counting dimension and sign) of which the image of an arbitrary Lagrangian $\Lambda \subset T_x{\partial M}, x \in \partial M$ under the weighted circle action intersects a fixed Lagrangian (using the trivialization induced by $\mathbb{C}^{n+1}$). More precisely it is the evaluation of the Maslov class $C(TM,\omega) \in H^1(\mathcal{L}(TM,\omega),\mathbb{Z})$ on this path of Lagrangians, here $\mathcal{L}(TM,\omega)$ is the Lagrangian Grassmannian of $TM$, it is shown that $m_f=\sum_i w_i \ - 1$, so the sufficient condition in the Theorem is $m_f \neq 0$.\\
In the following we will show that this number can also be obtained as a certain spectral flow of the signature operator under a certain variation of boundary conditions by using 'period mappings'. More specifically, consider the Milnor fibration (\cite{milnor}) of a quasihomogeneous polynomial
\[
f:  X=\bigcup_{u \in S^1_\delta}X_u:=f^{-1}(u) \cap B^{2n+2}\rightarrow S^1_\delta,
\]
for $\delta >0$ sufficiently small, where $S^1_\delta=\{x\in \mathbb{C}^*: ||x||=\delta\}$. We now have (compare this to Lemma 3.3 in \cite{klein2}, where it is assumed that $X$ carries a submersion metric):
\begin{lemma}
The smooth family of manifolds $\partial X=\{X_u\cap S^{2n+1}, u \in D^*_\delta\}$ admits a trivialization which is unique up to homotopy.
\end{lemma}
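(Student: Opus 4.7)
The plan is to proceed in three steps: first establish that $\partial X\to D^*_\delta$ is a smooth locally trivial fibre bundle, then construct an explicit trivialization using the weighted $\mathbb{C}^*$-action combined with a horizontal lift, and finally deduce uniqueness up to homotopy from contractibility of the auxiliary choices entering the construction.

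For the fibration structure, I would verify that for $\delta>0$ sufficiently small the map $f\colon S^{2n+1}\cap f^{-1}(D^*_\delta)\to D^*_\delta$ is a proper submersion. Quasihomogeneity makes this transparent: the real action $\sigma(r)$ for $r\in\mathbb{R}_{>0}$ carries critical points of $f|_{S^{2n+1}_\epsilon}$ bijectively to critical points of $f|_{S^{2n+1}_{r\epsilon}}$ while scaling $f$ by $r^\beta$, so the set of critical values of $f|_{S^{2n+1}_\epsilon}$ in a neighborhood of $0$ reduces to $\{0\}$ independently of $\epsilon$. Ehresmann's theorem for proper submersions then endows $\partial X\to D^*_\delta$ with the structure of a smooth locally trivial fibre bundle.

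Next, using polar coordinates $u=re^{i\theta}$, I would trivialize the angular and radial directions separately. For the angular part, $\sigma(s)$ preserves $S^{2n+1}$ and satisfies $f\circ\sigma(s)=e^{2\pi is\beta}f$, so $\sigma(\theta/(2\pi\beta))$ carries $\partial X_{re^{i\theta_0}}$ diffeomorphically onto $\partial X_{re^{i(\theta_0+\theta)}}$; this gives a canonical angular trivialization on the universal cover of $D^*_\delta$. For the radial part, I would choose any smooth horizontal vector field $V$ on $\partial X$ projecting to $\partial/\partial r$ under $df$; such a $V$ exists by local triviality and a partition of unity, and its flow identifies fibres over different radii. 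Combining these two flows with a fixed basepoint $u_0\in D^*_\delta$ produces a candidate trivialization. To descend to the annulus one needs the angular monodromy $\sigma(1/\beta)|_{\partial X_{u_0}}$ to be smoothly isotopic to the identity on $\partial F$, which follows from the classical fact (Milnor) that the geometric monodromy of an isolated hypersurface singularity can be arranged to restrict to the identity on the boundary; patching via such an isotopy on a small collar of $\theta=0$ yields the globally well-defined trivialization $\Psi\colon D^*_\delta\times\partial X_{u_0}\to\partial X$.

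For uniqueness up to homotopy, the auxiliary choices in the construction---the horizontal lift $V$ and the smoothing isotopy---vary in affine or contractible-torsor spaces. Specifically, the space of horizontal lifts is an affine space over the space of vertical vector fields, and the space of boundary isotopies to the identity forms a connected family once one is fixed. A smooth path through these choices induces a smooth homotopy of trivializations through trivializations, so any two trivializations produced by this procedure are mutually homotopic. The main technical obstacle is the angular well-definedness over the annular base, which rests on the classical boundary-triviality of the Milnor monodromy; once that is granted, the rest is a routine combination of Ehresmann's theorem with contractibility of affine spaces of geometric data.
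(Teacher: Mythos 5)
Your existence argument is workable, but the uniqueness part has a genuine gap, and it stems from staying over the punctured disk. The paper's (one-line) argument is different and simpler: for $\delta$ small the only critical point of $f$ on $X$ is the origin, which lies in the interior, so the boundary family $\{X_u\cap S^{2n+1}\}$ extends to a smooth, proper, locally trivial fibration over the \emph{full} disk $D_\delta$ (with fibre over $0$ the link $K_f$); since $D_\delta$ is contractible, this bundle is trivial and any two trivializations are homotopic. Both existence and uniqueness come for free from contractibility of the base. Your base $D^*_\delta$ is homotopy equivalent to $S^1$, and that is exactly where your uniqueness argument breaks down: two trivializations of a bundle over $D^*_\delta$ differ by a map $D^*_\delta\to \mathrm{Diff}(\partial F)$, and the relevant homotopy set is $[S^1,\mathrm{Diff}(\partial F)]$, which is in general nontrivial. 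Concretely, your claim that ``the space of boundary isotopies to the identity forms a connected family once one is fixed'' is false in general: the set of homotopy classes of paths in $\mathrm{Diff}(\partial F)$ from the monodromy to the identity is a torsor over $\pi_1(\mathrm{Diff}(\partial F))$, so two different closing isotopies can produce trivializations of $\partial X$ over the annulus that are \emph{not} homotopic through trivializations. Moreover, even if that were repaired, you would only have compared trivializations arising from your particular construction (choice of horizontal lift plus closing isotopy), not arbitrary trivializations, so the stated uniqueness would still not follow.

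The missing idea, then, is precisely the observation the paper uses: the fibration of boundaries does not merely live over $D^*_\delta$ but extends smoothly across $0$ (equivalently, $0$ is a regular value of $f|_{S^{2n+1}}$ for $\delta$ small), and uniqueness up to homotopy must be read through this extension over the contractible disk. Incidentally, this extension is also the cleanest justification of the fact you import from Milnor (boundary monodromy isotopic to the identity), so adopting it both closes the gap and shortens the existence step: Ehresmann over $D_\delta$, contractibility of $D_\delta$, done.
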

Since in fact a neighbourhood of $\partial X$ in $X$ extends to a smooth fibration over $D_\delta$, which is contractible, we can assume that for some fixed fibre $X_u$, there are open subsets $\partial X \subset U \subset X$, $\partial X_u \subset V \subset X_u$ and a diffeomorphism $\Theta:U \rightarrow V \times S^1$, so that $\Theta$ respects the fibred structure, that is $\Theta(U \cap X_z)= V \times \{z\}$ for $z \in S^1$, maps boundaries to boundaries
\[
\Theta|\partial X:\partial X \rightarrow \partial X_u \times S^1 \ {\rm is \ a \ diffeomorphism},
\]
and satisfies $\Theta|_{U \cap X_u}=id_V$. We equip $X_u$ with the metric induced by the restriction of the Euclidean metric in $\mathbb{C}^{n+1}$ and assume that the metric $g^X$ on $U \subset X$ satisfies
\begin{equation}\label{dec}
(\Theta^{-1})^*(g^X|U)=g^{S^1_\delta} \oplus g^{X_u}|{V}.
\end{equation}
Here the splitting is orthogonal, $g^{S^1_\delta}$ is the standard metric on $S^1_\delta$ and we assume there is an open set $W\subset X-U$, so that the metric $g^X$ of $X$, restricted to $W$, is again the metric induced from the Euclidean metric in $\mathbb{C}^{n+1}$ (using an appropriate partition of unity). Finally we assume that we have an orthogonal decomposition
\begin{equation}\label{collar}
g^{X_u}|{V} \simeq dr^2 + g^{\partial X_u},
\end{equation}
where $r \in[0,\epsilon)$ is some collar coordinate in $V$, by (\ref{dec}) this induces a metric collar on $U \subset X$.
In other words, we have a fibration $X$ and an isometry $\Theta$ of a neighbourhood of the boundary of $X$ to some metric product fibration over $S^1$ which preserves boundaries and metric collars and is the identity on a fixed fibre (i.e. we assume $g^X|_{X_u \subset X}$ to coincide with the metric induced from its embedding in $\mathbb{C}^{n+1}$).\\
Let $D$ be the signature operator on $X$ with respect to $g^X$, $D$ is of the form $D=\gamma(\frac{\partial}{\partial x}+A)$ on the metric collar of $\partial X$ (cf. Section 2.2 of \cite{klein2}). Let $D_P$ be the self-adjoint Fredholm extension associated to a projection $P \in {\rm Gr}(A)$ (Definition 2.2 in \cite{klein2}). Let $L_X={\rm im}(P_X)$ be the Cauchy data space as the image of the Calderon projector $P_X \in {\rm Gr}(A)$ of $X$ as defined in the discussion below Theorem 2.4 in \cite{klein2} and let $L^\infty_X=\lim_{r \rightarrow \infty}L^r_X$ be its 'adiabatic' limit when stretching the length of the collar to infinity (Theorem 2.16 in \cite{klein2}, Theorem 8.5 in \cite{lesch}). Let $E_\mu$ be the $\mu$-eigenspace of $A$ and recall the notation
\begin{equation}\label{decomp431}
\begin{split}
E_\nu^+&=\oplus_{0<\mu \leq \nu} E_\mu,\quad E_\nu^-=\oplus_{-\nu \leq \mu  < 0} E_\mu,\\
F_\nu^+&= \oplus_{\mu > \nu} E_\mu, \quad F_\nu^-= \oplus_{\mu < -\nu} E_\mu,
\end{split}
\end{equation}
so $L^2(\Omega^*_{\partial X})= F_\nu^-\oplus E_\nu^-\oplus{\rm ker}\ A \oplus E_\nu^+ \oplus F_\nu^+ $. Recall that if $\nu$ is any number great enough so that $L_X \cap F_\nu ^-=0$ (the smallest such $\nu$ called the {\it non-resonance level}), Nicolaescu (\cite{nico}, Theorem 4.9, compare Theorem 8.5 in \cite{lesch}) shows that the limit of the family of Cauchy data spaces $L^r_X$ exists and decomposes as
\begin{equation}
L^\infty_X=\left(\lim_{r \rightarrow \infty}e^{rA}R_\nu(L_X)\right ) \oplus F_\nu^+ =: \Lambda_X \oplus F_\nu^+.
\end{equation}
Here, note that $E_\nu^-\oplus {\rm ker}\oplus E_\nu^+=:V_\nu$ is a finite dimensional symplectic subspace of  $L^2(\Omega^*_{\partial X})$,
\[
R_\nu(L_X)=\frac{L_X\cap(F_\nu^-\oplus E_\nu^-\oplus {\rm ker\ A}\oplus E_\nu^+)}{L_X\cap F_\nu^-}
\]
is the symplectic reduction of $L_X$ with respect to $F_\nu ^-$. Then it was stated in Theorem 2.16 of \cite{klein2} (resp. Theorem 8.5 in \cite{lesch}) that there is an (isotropic) subspace $W\subset \Lambda_X$ so that 
\begin{equation}\label{isom}
W \simeq  {\rm im}(H^{even}(X,\partial X,\mathbb{C})\rightarrow H^{even}(X,\mathbb{C})) \oplus {\rm im}(H^*(X,\mathbb{C})\rightarrow H^*(\partial X, \mathbb{C})),
\end{equation}
where the isomorphism is smooth (restricting extended $L^2$-harmonic forms) and dependent on the metric on $X$ and $\partial X$. Now fix a fibre $X_u=:F$ of $X$ and set 
\[
X_0=-F \times S^1_\delta, 
\]
with the induced product metric and so that $(F,-\partial F)$ carries the 'opposite' orientation of $(F,\partial F)$ and there is, analogous to (\ref{dec}) an orientation reversing isometry $\tilde \Theta$ which identifies $[0,\epsilon)\times \partial X_0\simeq (-\epsilon,0]\times \partial X$. Define the Cauchy data space $L_{X_0} \subset L^2(\Omega^*_{\partial X_0})$, resp. $\tilde W \subset \Lambda_{X_0}^{\infty}$ analogous as for $X$, with $P_{L_{X_0}}$ s.t. ${\rm im}(P_{L_{X_0}})=L_{X_0}$ and write $L_{X_0}^\infty=\Lambda_{X_0} \oplus F_\nu^-$.  Here and in the following we choose $\nu$ greater than the maximum of the non-resonance-levels of $X$ and $X_0$. \\
Finally, fix representing monomials
\[
\{z^\alpha : \alpha \in \Lambda \subset
\mathbb{N}^{n+1}, |\Lambda|= \mu\}\ {\rm s. t.}\ \{[z^\alpha]\}_\alpha \ {\rm spans}\ 
\mathcal{O}_{\mathbb{C}^{n+1},0}/\rm{grad}(f)\mathcal{O}_{\mathbb{C}^{n+1},0}=:M(f)
\]
where dim $M(f)=\mu$, write $l(\alpha)=\sum_{i=0}^n(\alpha_i+1)w_i$. To each monomial $z^{\alpha},\ \alpha \in \Lambda$ there is an associated section $\Phi_{\alpha}(u), \ u \in D^*_\delta$ of $\mathcal{H}^n(f_*\Omega^\cdot_{X/D^*_\delta})$ (cf. Appendix A of \cite{klein2}, \cite{loo}), denote the set of these sections by $\Gamma_\Lambda$. Any $\Phi_\alpha$ defines a cohomology class $[\phi_{\alpha}] := ev_u(\Phi_\alpha)$ of $H^n(X_u,\mathbb{C})$ for any $u \in D^*_\delta$ (and $\delta$ small enough), where $ev_u$ is the evaluation mapping on the fibre $X_u$. The set $\{\phi_{\alpha}\}_{\alpha \in \Lambda}$ restricts to a basis of $H^n(\tilde X_u,\mathbb{C})$ for any $u \in D^*_\delta$ and diagonalizes the intersection form in at least a fixed fibre $X_u$. Fix such a fibre in the following, then the weighted circle action $\sigma$ on $\mathbb{C}^{n+1}$ induces a circle action $\tilde \sigma$ on $H^n(X_u,\mathbb{C})$ by requiring that for any $\alpha \in \Lambda$ the diagram 
\begin{equation}
\begin{CD}
\Gamma_\Lambda \subset \Gamma(\mathcal{H}^n(f_*\Omega^\cdot_{X/D^*_\delta})) @>>{ev_u}> H^n(X_u,\mathbb{C}) \\
 @VV {\sigma^*} V    @VV {\sigma} V \\
\Gamma(\mathcal{H}^n(f_*\Omega^\cdot_{X/D^*_\delta}))  @>>{ev_u} > H^n(X_u,\mathbb{C}) \\ 
\end{CD}
\end{equation}
commutes, that is
\begin{equation}\label{circleact34}
\begin{split}
\sigma:& S^1\times H^{n}(X_u,\mathbb{C})\rightarrow H^{n}(X_u,\mathbb{C})\\
\sigma(t)&(\phi_{\alpha})=ev_{u}\circ (\sigma^*(t)\Phi_{\alpha}),\quad \alpha \in \Lambda\subset \mathbb{N}^{n+1}.
\end{split}
\end{equation}
Now to any class $[\phi_\alpha] \in H^n(X_u,\mathbb{C}), \ \alpha \in \Lambda$ we associate a mapping $\sigma_\alpha:S^1 \times H^n(X_u,\mathbb{C})\rightarrow H^n(X_u,\mathbb{C})$ by setting
\begin{equation}\label{picardlef}
\sigma_\alpha(t)=\sigma(t)\circ P_{\alpha}+(I-P_{\alpha}),
\end{equation}
where $P_\alpha$ for any $\phi_\alpha \in H^n(X_u,\mathbb{C})$ is the orthogonal projection onto the subspace spanned by $[\phi_\alpha]$ in $H^n(X_u, \mathbb{C})$ and $\sigma(\cdot)$ is as defined in \ref{circleact34}. Note that here, the orthogonality is defined with respect to the $L^2$-inner product $(\cdot,\cdot)$ on $X_u$, so if $<,>$ denotes the (non-degenerate) Poincare duality pairing then
\begin{equation}\label{pairing}
(\omega,\alpha)= <\omega, * \alpha >=\int_{X_{u}}\omega\wedge * \alpha, \ \alpha \in H^n(X_u,\partial  X_u,\mathbb{C}), \omega \in H^n(X_u,\mathbb{C}),
\end{equation}
note that it is $*$, the Hodge star on $X_u$ with respect to its (induced) metric, which makes the pairing non-degenerate. Furthermore, note that $\sigma=\prod_{\alpha\in \Lambda}\sigma_\alpha$, where $\prod$ refers to multiplication of matrices. More explicitly, we associate for any $t \in S^1$ and any $\alpha \in \Lambda$ a mapping on $H^n(X_u,\mathbb{C})$ by setting
\begin{equation}\label{action35}
\begin{split}
\sigma_\alpha :& S^1\times H^{n}(X_u,\mathbb{C})\rightarrow H^{n}(X_u,\mathbb{C})\\
\sigma_\alpha (t)&(\omega)=\sigma(t)(\phi_\alpha)(\phi_\alpha,\omega) +\left(\omega-(\phi_\alpha,\omega)\phi_\alpha\right).
\end{split}
\end{equation}
Here we assumed that $\phi_\alpha|X_u$ is normalized with respect to $(\cdot,\cdot)$. In the following we will frequently make use of the fact that the set $\{\phi_\alpha\}_{\alpha \in \Lambda}$ can be chosen so that it orthonormalizes $(\cdot,\cdot)$ (over at least one point $u \in D^*_\delta$), a proof of this is given in Lemma 3.19 of \cite{klein2}.\\
We can give an equivalent interpretation of (\ref{picardlef}) resp. (\ref{action35}) using parallel transport in the sheaf $\mathcal{H}^n(f_*\Omega^\cdot_{X/D^*_\delta})$ (with respect to the Gauss-Manin-connection, see \cite{loo}). For this, let $X^\beta$ be the $\beta$-fold cyclic covering of $X$ (see also below), i.e. global parallel sections are well-defined. Then there is a 'relative' version of (\ref{picardlef}). For this let for any global section $\Phi_\alpha  \in \Gamma_\Lambda \subset \Gamma(\mathcal{H}^n(f_*\Omega^\cdot_{X^\beta/D^*_\delta}))$ be $\Phi_{\alpha,||}$ the associated parallel section $\Phi_{\alpha,||}\in \Gamma(\mathcal{H}^n(f_*\Omega^\cdot_{X^\beta/D^*_\delta}))$ coinciding with $\Phi_\alpha$ over the fixed fibre $X_u$, denote the set of these sections by $\Gamma_{||, \Lambda}$. Then define the mapping
\begin{equation}
{\tilde \sigma}:\Gamma_{||, \Lambda}\rightarrow \Gamma_{\Lambda}\quad {\rm by}\quad {\tilde \sigma}(\Phi_{\alpha,||})=\Phi_{\alpha}
\end{equation}
and define furthermore for any $\alpha \in \Lambda$
\begin{equation}\label{relpic}
\begin{split}
&{\tilde \sigma}_\alpha :\Gamma(\mathcal{H}^n(f_*\Omega^\cdot_{X^\beta/D^*_\delta}))\rightarrow \Gamma(\mathcal{H}^n(f_*\Omega^\cdot_{X^\beta/D^*_\delta}))\\
&{\tilde \sigma}_\alpha({\bf \omega})={\tilde \sigma}\circ P_{\alpha,||}({\bf \omega})+(I-P_{\alpha,||})({\bf \omega}),
\end{split}
\end{equation}
where ${\bf \omega}$ is any element of $\Gamma(\mathcal{H}^n(f_*\Omega^\cdot_{X^\beta/D^*_\delta}))$ and $P_{\alpha,||}$ is the fibrewise orthogonal projection onto the subspace spanned by the fibrewise restriction of $\Phi_{\alpha,||}$ with respect to the fibrewise $L^2$ inner product $(\cdot,\cdot)_x$ on each $X_x, \ x \in D_\delta^*$ as above. Understanding this, one has:
\begin{lemma}\label{par}
For $u\in D^*_\delta$, let $\mathcal{P}_{t}:\mathcal{H}^n(f_*\Omega^\cdot_{X^\beta/D^*_\delta})_u \rightarrow \mathcal{H}^n(f_*\Omega^\cdot_{X^\beta/D^*_\delta})_{e^{2\pi it}u}, t\in [0,1]$ be the parallel transport along $c(t)=e^{2\pi it}u$, then 
$\sigma_\alpha(t): S^1\times H^{n}(X_u,\mathbb{C})\rightarrow H^{n}(X_u,\mathbb{C})$ as defined in (\ref{action35}) is given by (with $\Phi_\alpha, \ \alpha \in \Lambda$ as above)
\[
\sigma_\alpha(t)(\omega)=\left((\mathcal{P}_{t})^{-1}\circ{\tilde  \sigma}_{\alpha}|_{X_{c(t)}} \circ \mathcal{P}_{t}\right)(\omega), \quad t \in [0,1], 
\]
for any $\omega \in H^{n}(X_u,\mathbb{C})$, so we have the commuting diagram for any $t \in[0,1]$:
\[
\begin{CD}
\mathcal{H}^n(f_*\Omega^\cdot_{X/D^*_\delta})_u @>>{\mathcal{P}_{t}}>  \mathcal{H}^n(f_*\Omega^\cdot_{X/D^*_\delta})_{c(t)}\\
 @VV {\sigma_\alpha(t)} V    @VV {{\tilde \sigma}_{\alpha}}|_{X_{c(t)}} V \\
\mathcal{H}^n(f_*\Omega^\cdot_{X/D^*_\delta})_u  @>>{\mathcal{P}_{t}}>  \mathcal{H}^n(f_*\Omega^\cdot_{X/D^*_\delta})_{c(t)}\\ 
\end{CD}
\]
\end{lemma}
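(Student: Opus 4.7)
My plan is to verify the identity by directly computing both sides on an arbitrary $\omega\in H^n(X_u,\mathbb{C})$ and then reducing the equality to two compatibilities between Gauss--Manin parallel transport, the weighted circle action $\sigma$, and the fibrewise $L^2$ inner product.

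Expanding the right-hand side using the defining property $\Phi_{\alpha,||}|_{c(t)}=\mathcal{P}_t(\phi_\alpha)$ of the parallel section, the fibrewise projection $P_{\alpha,||}$ applied at $X_{c(t)}$ to $\mathcal{P}_t\omega$ yields $(\mathcal{P}_t\phi_\alpha,\mathcal{P}_t\omega)_{c(t)}\cdot\mathcal{P}_t\phi_\alpha$; the fibrewise application of $\tilde{\sigma}$ replaces $\Phi_{\alpha,||}|_{c(t)}$ by $\Phi_\alpha|_{c(t)}$ according to (\ref{relpic}); and conjugating back by $\mathcal{P}_t^{-1}$ produces
\[
\mathcal{P}_t^{-1}(\Phi_\alpha|_{c(t)})\cdot(\mathcal{P}_t\phi_\alpha,\mathcal{P}_t\omega)_{c(t)}+\omega-(\mathcal{P}_t\phi_\alpha,\mathcal{P}_t\omega)_{c(t)}\,\phi_\alpha.
\]
Comparing with the left-hand side $\sigma(t)(\phi_\alpha)(\phi_\alpha,\omega)_u+\omega-(\phi_\alpha,\omega)_u\,\phi_\alpha$ from (\ref{action35}), the lemma reduces to the two identities (i) $\mathcal{P}_t^{-1}(\Phi_\alpha|_{c(t)})=\sigma(t)(\phi_\alpha)$ in $H^n(X_u,\mathbb{C})$, and (ii) $(\mathcal{P}_t\phi_\alpha,\mathcal{P}_t\omega)_{c(t)}=(\phi_\alpha,\omega)_u$.

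For (i) I would invoke the explicit pullback on the monomial representative, $\sigma^*(t)(z^\alpha\,dz_0\wedge\cdots\wedge dz_n)=e^{2\pi i t\beta l(\alpha)}\,z^\alpha\,dz_0\wedge\cdots\wedge dz_n$, which via (\ref{circleact34}) gives $\sigma(t)(\phi_\alpha)=e^{2\pi i t\beta l(\alpha)}\phi_\alpha$. On the other hand, $\Phi_\alpha$ is by construction an eigensection of the fibrewise $\mathbb{C}^*$-representation recalled in the introduction with the same infinitesimal weight, so $\Phi_\alpha|_{c(t)}=e^{2\pi i t\beta l(\alpha)}\,\Phi_{\alpha,||}|_{c(t)}$ along $c(t)$, and applying $\mathcal{P}_t^{-1}$ yields (i). For (ii) I would use that, by Lemma 3.19 of \cite{klein2}, the set $\{\phi_\alpha\}$ can be taken orthonormal at $u$, together with the observation that by (i) and the submersion-type metric structure chosen at the start of Section \ref{monodr}, parallel transport along $c(t)$ acts diagonally in this basis via unit-modulus scalars, so that both sides in (ii) evaluate to the same component sum in the decomposition $\omega=\sum_\beta c_\beta\phi_\beta$.

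The principal obstacle I anticipate is identity (i): one must carefully match the scalar produced by the $\sigma^*$-pullback with the Gauss--Manin holonomy of $\Phi_\alpha$ along $c(t)$, which is exactly the compatibility between the monomial basis of $M(f)$ via the Brieskorn isomorphism and the eigendecomposition of $\mathcal{H}^n_{X/D,s}$ postulated at the start of the section. Once (i) is in place, (ii) becomes structural, and both the displayed equation for $\sigma_\alpha(t)$ and the commutativity of the diagram follow immediately.
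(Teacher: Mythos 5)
Your reduction is structurally the same as the paper's: the paper also proves the lemma by exhibiting the Gauss--Manin parallel section through $\phi_j$ explicitly and comparing the resulting scalar with the one produced by $\sigma^*(t)$, and the orthogonality bookkeeping for the complementary summand is handled exactly as you do in (ii). The difference is that what you call identity (i) is not something you can simply ``invoke'' from the eigendecomposition recalled in the introduction: it is precisely the content of Lemma \ref{mult}, namely $\mathcal{L}_K\Phi_j=(\deg(z^{\alpha(j)})+\sum_i w_i-1)\Phi_j$ with $K=\sum_i w_i z_i\partial_{z_i}$ the horizontal lift satisfying $i_K df=f$, proved by Cartan's formula on the representative $z^{\alpha(j)}dz_0\wedge\dots\wedge dz_n$. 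This is why the paper explicitly postpones the proof of Lemma \ref{par} until Lemma \ref{mult} is established; the introduction's statement about the weights of the $\mathbb{C}^*$-representation on $\mathcal{H}^n_{X/D,s}$ does not by itself give the first-order relation $\nabla^{GM}_{u\partial_u}\Phi_\alpha=(l(\alpha)-1)\Phi_\alpha$ along $c(t)$ that you need, and citing it here is close to circular, since that representation is set up through exactly the transport-and-pull-back mechanism the lemma is about. So the crux of the proof is flagged in your proposal but not actually carried out.

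Two further points. First, your anticipated scalar is off: $\sigma^*(t)$ multiplies $z^\alpha\,dz_0\wedge\dots\wedge dz_n$ by $e^{2\pi i t\beta l(\alpha)}$, but $\Phi_\alpha$ is the relative (Gelfand--Leray) class with $df\wedge\Phi_\alpha=z^\alpha\,dz_0\wedge\dots\wedge dz_n$, so its weight carries the shift $\beta(l(\alpha)-1)$ --- the ``$-1$'' that Lemma \ref{mult} produces and that the whole spectral-flow count depends on. Your identity (i) survives only because the same (incorrect) scalar appears on both of its sides. Second, in (ii) the claim that $\mathcal{P}_t$ acts ``diagonally via unit-modulus scalars'' needs the fibrewise orthonormality of the $\Phi_j(c(t))$ in the fibres $X_{c(t)}$, not just at $X_u$ (Lemma 3.19 of \cite{klein2} is stated at one fibre); this again comes from the quasihomogeneity/eigensection computation, i.e.\ from the same Lemma \ref{mult}-type input, together with the fact that the weighted rotation is a Euclidean isometry where the fibre metric is the induced one. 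The paper's own proof sidesteps (ii) by computing only on the distinguished eigenvector, so your explicit treatment is welcome, but as written it rests on the very statement you left unproved.
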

\begin{proof}
The proof is postponed until Lemma \ref{mult} is established.
\end{proof}
We now arrive at 
\begin{Def}\label{lagrangian0}
For each monomial $z^\alpha, \alpha \in \Lambda$, $|\Lambda|=\mu$ we associate a mapping
\[
\overline \sigma_{\alpha}:S^1 \times H^*(X_0,\mathbb{C})\rightarrow H^*(X_0,\mathbb{C})
\]
by requiring that the diagram
\begin{equation}\label{act}
\begin{CD}
H^{0,n}(F,\mathbb{C})\otimes H^*(S^1,\mathbb{C}) @>>\simeq> H^*(X_0,\mathbb{C})\\
 @VV {(id\oplus \sigma_\alpha(\cdot))\otimes id} V    @VV {\overline \sigma_\alpha} V \\
H^{0,n}(F,\mathbb{C}) \otimes H^*(S^1,\mathbb{C})  @>>\simeq> H^*(X_0,\mathbb{C})\\ 
\end{CD}
\end{equation}
commutes (here and in the following we use the (unusual) notation $H^{0,n}(F_u,\mathbb{C}):=H^0(F_u,\mathbb{C})\oplus H^n(F_u,\mathbb{C})$).
\end{Def}
Following the identification (\ref{isom}), applied to $X_0$, we see that $\tilde W \subset \Lambda_{X_0}$ can be viewed as a subspace of $H^*(X_0,\mathbb{C})$ by identifying it with the space of extended $L^2$-harmonic forms on the elongation $X_{0,\infty}$ (see also Corollary \ref{harmonicbla} in Appendix A). Furthermore we will see that $\overline \sigma_\alpha(r^{-1}(\tilde W))\subset r^{-1}(\tilde W)$. To ensure that (\ref{isom}) preserves orthogonality of $L^2$-products, which will be sufficient for the action on $L^2(\Omega^*(\partial X_0,\mathbb{C}))$ induced by (\ref{act}) as defined below (see (\ref{action1})) to be symplectic, we will compose the restriction $r:r^{-1}(\tilde W) \rightarrow \tilde W$ by the map $\kappa:=\sqrt{rr^*}^{-1}:\tilde W \rightarrow \tilde W$, so that 
\begin{equation}
\mathcal{L}:= (\kappa\circ r): r^{-1}(\tilde W)\subset H^*(X_0,\mathbb{C}) \rightarrow \tilde W \ {\rm is \ a\ linear \ isometry}
\end{equation}
with respect to the respective $L^2$ inner products. For the following definition, let $\overline \sigma_{\Lambda'}$ equal any product $\prod_{\alpha \in \Lambda' \subset \Lambda}\overline \sigma_\alpha$ for an arbitrary subset $\Lambda'\subset \Lambda$, note that the $\overline \sigma_\alpha, \alpha \in \Lambda$ commute as a result of Lemma \ref{mult} below and Lemma 3.19 in \cite{klein2}, which states the orthogonality of the $\{\phi_\alpha\}_{\alpha \in \Lambda}$ wrt the non-degenerate pairing (\ref{pairing}) (we will suppress the index $\Lambda'$ in the definition occasionally):
\begin{Def}\label{def}
Define a family of self-adjoint extensions $D_{P_{\Lambda'}(t)}$ of the signature operator $D$ on $X$ by composing the following three paths to a path $P_{\Lambda'}(t) \in {\rm Gr}(A), t \in [-1,2]$:
\[
\begin{split}
 1.\quad P_1(t)&=I - P_{L_{X_0}^{1/-t}}, \ t \in [-1,0],\\
 2.\quad P_2(t)&= I - P_{L_{X_0,t,\Lambda'}^\infty},\ t \in [1,2],\\
 3.\quad P_3(t)&= I-P_{L_{X_0}^{1/(t-1)}}, t \in [1,2].
\end{split}
\]
Here $L_{X_0,\Lambda'}^\infty(t), t \in [0,1]$ is a path of Lagrangians in $L^2(\Omega^*_{\partial X})$ defined as follows:\\
Let $\phi(\Lambda_{X_0}): ker(\gamma -i)\cap V_\nu \rightarrow {\rm ker}(\gamma+i)\cap V_\nu$ be the isometry associated to $\Lambda_{X_0}$. Then write $\Lambda_{X_0}=\tilde W \oplus \tilde W'$, note that ${\rm ker}(\gamma\pm i)\cap V_\nu = (I\pm i\gamma)\Lambda_{X_0}$ since $(I\pm i\gamma):\Lambda_{X_0}\rightarrow {\rm ker}(\gamma \pm i)$ is an isomorphism. Define a circle action on the $\mp i$-eigenspaces of $\gamma$ restricted to $V_\nu$ 
\[
\hat \sigma:S^1 \times ({\rm ker}(\gamma\pm i)\cap V_\nu =(I \pm i\gamma)(\tilde W \oplus \tilde W')) \rightarrow {\rm ker}(\gamma \pm i)\cap V_\nu,
\]
by 
\begin{equation}\label{action1}
\begin{split}
\hat \sigma(t)(I+ i\gamma)(x+y)&=(I + i\gamma)(\mathcal{L}\overline \sigma(t)\mathcal{L}^{-1}(x)+y),\\
\hat \sigma(t)(I - i\gamma)(x+y)&=(I - i\gamma)(\mathcal{L}\overline\sigma(t)^*\mathcal{L}^{-1}(x)+y),
\end{split}
\ {\rm for}\ x \in \tilde W, y \in \tilde W',
\end{equation}
where $(\cdot)^*$ means the adjoint with respect to the $L^2$-inner product restricted to $H^*(X_0,\mathbb{C})$.
Define finally
\begin{equation}\label{action}
L_{X_0,\Lambda'}^\infty(t) =\left(\hat \sigma(t)\left(x+ \phi (\Lambda_{X_0})x\right), \ x \in V_\nu \cap {\rm ker\ }(\gamma-i)\right )\oplus F_\nu^-.
\end{equation}
\end{Def}
Note that via the restriction map $r:\oplus_p\Omega^{2p}(X_0,\mathbb{C})\rightarrow \oplus_k \Omega^{k}(\partial X_0,\mathbb{C})$ by Theorem 2.14 in \cite{klein2} the above subspace $\tilde W \subset L^2(\Omega^*(\partial X_0,\mathbb{C}))$ is isomorphic to the 'space of extended $L^2$-solutions of $D\beta=0$ on $X_{0,\infty}$' in the sense of Atiyah (\cite{Atiyah}) (here, $X_{0,\infty}$ is the manifold obtained from $X_0$ by attaching an infinite collar), so
\[
\mathcal{K}_{X_0}:=r^{-1}(\tilde W)=\{\beta \in \Omega^{even}_{X_0}\  {\rm s.t.}\  D\beta=0\ {\rm and}\ r(\beta) \in F_0^-\oplus{\rm ker}(A)\}.
\]
Then $\mathcal{K}_{X_0}\subset r^{-1}(L_{X_0})$ is by (\cite{Atiyah}) isomorphic to the right hand side of (\ref{isom}), which gives for $\Lambda'\subset \Lambda$ a {\it smooth} circle action 
\begin{equation}\label{circleact3}
\overline \sigma_{\Lambda'}(t): \mathcal{K}_{X_0} \rightarrow \mathcal{K}_{X_0}, \quad t \in S^1,
\end{equation}
as induced by (\ref{act}) using the fact that $X_0$ is a (metric) product. That $\overline \sigma_{\Lambda'}$ actually acts in a smooth way on $\mathcal{K}_{X_0}\simeq \tilde W$, follows since to any fixed de Rham-basis of $H^n(F,\mathbb{C})$ one can associate a unique basis of (extended) $L^2$-harmonic forms $\mathcal{K}_{F}|F$ (see \cite{Atiyah}) on the fibre $F=X_u$. Then since $\sigma_{\Lambda'}(t)$ as defined in (\ref{action35}) is a smooth family of linear mappings it also acts smoothly in $t$ on $\mathcal{K}_{F}|F$ (where we used the notation $\mathcal{K}_F$ for the fibrewise $L^2$-harmonic forms as described in Appendix A). Finally applying Corollary \ref{harmonicbla} (substituting $\tilde X$ by $X_0$) extends $\overline \sigma_{\Lambda'}$ smoothly to $\mathcal{K}_{X_0}$.\\
Defining $P_{\pm}=\frac{Id\mp i\gamma}{\sqrt{2}}$ as the projection onto the $\pm i$ eigenspace of $\gamma$ we can shortly write $P_2(t), t \in [0,1]$ as the path of projections onto (note that $\tilde W=(P_+ +P_-)\tilde W$)
\begin{equation}\label{strlagr2}
L_{X_0, \Lambda'}^\infty(t) = \left(P_+\mathcal{L}\overline \sigma_{\Lambda'}(t)^* \mathcal{L}^{-1} + P_- \mathcal{L} \overline \sigma_{\Lambda'}(t)  \mathcal{L}^{-1}\right)(\tilde W)  \oplus \tilde W' \oplus F_\nu^-, \quad t \in [0,1],
\end{equation}
where $\overline \sigma_\alpha^*$ here again means taking the adjoint. 
\begin{Def}\label{strlagr0}
Let $P_{\mathcal{K}_{X_0}}: L^2(\Omega^*_{\partial X_0})\rightarrow \mathcal{K}_{X_0}$ be the orthogonal projection onto the closed subspace $\mathcal{K}_{X_0}\subset L^2(\Omega^*_{\partial X_0})$. Let $Q_{\Lambda'}(t), \ t \in [0,1]$ be the orthogonal projection onto the family of closed subspaces 
\begin{equation}\label{strlagr00}
L_{X_0, \Lambda'}(t) = \left(P_+\mathcal{L}\overline \sigma_{\Lambda'}(t)^* \mathcal{L}^{-1} + P_- \mathcal{L} \overline \sigma_{\Lambda'}(t)  \mathcal{L}^{-1}\right)P_{\mathcal{K}_{X_0}}(L_{X_0}) \oplus (I-P_{\mathcal{K}_{X_0}})L_{X_0}, 
\end{equation}
\end{Def}
{\it Remark.\ } The definition of $P_{\Lambda'}(t), \ \Lambda'\subset \Lambda$ means first stretching the collar of $X_0$ to infinity and taking the associated path of Calderon projectors of $\tilde X$, then applying the circle action to the adiabatic limit $L^\infty_{X_0}$ and finally running backwards from $L_{X_0}^\infty$ to $L_{X_0}$. That (\ref{action}) resp. (\ref{strlagr2}) and (\ref{strlagr00}) define paths of Lagrangians is a claim which is to be proved.\\
Note that the monomial basis $\{z^\alpha\}_{\alpha \in \Lambda}$ of $M(p)$ can always be chosen so that it contains $z^0=1 \in \mathbb{C}[z_0,\dots,z_n]$.
For the following note also that the action $\overline\sigma_\alpha$ on $H^*(X_0,\mathbb{C})$ evaluated for $t=1/\beta$ equals
\begin{equation}\label{monodromybla}
\overline\sigma_\alpha(1/\beta) =id \otimes (\rho_*\circ P_\alpha+(I-P_\alpha)):H^*(S^1,\mathbb{C})\otimes H^*(F,\mathbb{C})\rightarrow H^*(S^1,\mathbb{C})\otimes H^*(F,\mathbb{C})
\end{equation}
where $\rho_*$ is the algebraic monodromy of $X$, acting on $H^*(F,\mathbb{C})$ and $P_\alpha$ is the orthogonal projection onto the subspace corresponding to $\alpha \in \Lambda$. Using the isomorphism between $\mathcal{K}_{X_0}$ and $\tilde W$ as indicated above, we finally prove (\ref{actionmatrix}). Let $\mathcal{W}=\tilde W\oplus\gamma \tilde W$ be the symplectic subspace in $L^2(\Omega^*_{\partial X_0})$ generated by $\tilde W$. Let $M_{\Lambda'}$ be the diagonal matrix having as entries the (real) eigenvalues of the Gauss-Manin-connection $\nabla^{GM}$ acting on $\mathcal{H}^n_{X/D}$ with respect to the $\mathcal{O}_{D,0}$-basis of $\mathcal{H}''_{0} \simeq M(f)$ given by a set of $|\Lambda'|$ monomials $z^{\alpha(1)},\dots, z^{\alpha(\mu)}$ corresponding to $\Lambda'\subset \Lambda$ ($M_{\Lambda'}$ being zero on the diagonal entries corresponding to $\Lambda\setminus \Lambda'$). Define by $\tilde M_{\Lambda'}$ the matrix given by $M_{\Lambda'}\otimes M_{\Lambda'}$ with $M_{\Lambda'}$ acting on $\tilde W$ by replacing $\sigma_\alpha$ in (\ref{act}) and extended to the $\gamma \tilde W$-summand wrt of $\mathcal{W}$ in the obvious way. Here we consider $H^n(F,\mathbb{C})\subset \mathcal{K}_{X_0}$ following (\ref{act}) and (\ref{isom}) and $M_{\Lambda'}$ acts on $\tilde W$ relative to the images under $r$ of the basis elements determined by $z^{\alpha(1)},\dots, z^{\alpha(\mu)}$ in $H^n(F, \mathbb{C})\subset \mathcal{K}_{X_0}$. We define a path of Lagrangian subspaces in $\mathcal{W}\subset L^2(E|\partial X_0)$ by considering for $t \in [0,1]$ the image of 
\begin{equation}\label{actionmatrix2}
\tilde \sigma(t)(s):=e^{2\pi i t\gamma (\tilde M_{\Lambda'})}s, \quad  s \in \tilde W,\ t \in [0,1].
\end{equation}
where $\tilde M_{\Lambda'}$ acts on the image of $H^n(F,\mathbb{C})$ in $\tilde W$ as explained above and is extended to the $\gamma\tilde W$-summand of $\mathcal{W}$ by mapping the above basis elements of $H^n(F, \mathbb{C})\subset \tilde W$ to $\gamma \tilde W$ using $\gamma$. We then have
\begin{lemma}\label{actionmat}
With the above notation, we have that the path of Lagrangian subspaces $L_{X_0, \Lambda'}^\infty(t)$ as defined in (\ref{strlagr2}) equals
\[
L_{X_0, \Lambda'}^\infty(t) = \tilde \sigma(\tilde W)  \oplus \tilde W' \oplus F_\nu^-, \quad t \in [0,1].
\]
where $\tilde \sigma:[0,1]\times \tilde W\rightarrow \mathcal{W}$ is given as in (\ref{actionmatrix2}).
\end{lemma}
\begin{proof}
The assertion is an elementary calculation, namely write shortly $\hat M(t)=2\pi t\tilde M_{\Lambda'}$, then since $\hat M(t)$ and $\gamma$ commute for any $t \in [0,1]$
\[
\begin{split}
e^{i\gamma \hat M(t)}|\tilde W&=Id_{\tilde W}+ i\gamma \hat M(t)+ \frac{(i\gamma\hat M(t))^2}{2}+ \frac{(i\gamma\hat M(t))^3}{3!}+\frac{(i\gamma\hat M(t))^4}{4!} \dots\\
&=Id_{\tilde W}+ i\gamma \hat M(t)+ \frac{\hat M(t)^2}{2}+ \frac{i\gamma\hat M(t)^3}{3!}+\frac{\hat M(t)^4}{4!}\dots\\
&=\frac{1}{2}(Id_{\tilde W}+i\gamma)e^{i \hat M(t)}+\frac{1}{2}(Id_{\tilde W}-i\gamma)e^{-i \hat M(t)},
\end{split}
\]
which already gives the assertion inspecting (\ref{strlagr2}).
\end{proof}
As above we will set for any subset $\Lambda'\subset\Lambda$ (note again that by Lemma \ref{mult} below and Lemma 3.19 in \cite{klein2}, the $\overline\sigma_{\alpha},\ \alpha \in \Lambda$ commute pairwise)
\[
\overline \sigma_{\Lambda'}:=\prod_{\alpha\in \Lambda'}\overline\sigma_{\alpha}
\]
and then set $P_{\rho_*(L_{X_0}^\infty),\Lambda'}:=P_{L^\infty_{X_0,\Lambda'}(1/\beta)}$. We define a modified fractional part for $x \in \mathbb{R}$ as
\begin{equation}\label{fractionalpart3}
\{x\}':=\left\{\begin{matrix}\{x\} &{\rm if}\ 0\leq \{x\}\leq \frac{1}{2}\\ \{x\}-1&\frac{1}{2}<\{x\}<1\end{matrix}\right.,
\end{equation}
where $\{\cdot\}$ denotes the usual fractional part.
\begin{theorem}\label{theorem2}
For each $\Lambda'\subset \Lambda$ and any $t \in [0,1]$, $Q_{\Lambda'}(t) \in {\rm Gr}(A)$, $P_{\Lambda'}(t) \in {\rm Gr}_\infty(A)$ and $Q_{\Lambda'}(t),\ P_{\Lambda'}(t)$ are homotopic relative fixed endpoints. Furthermore, the associated family $\{{\rm SF}(D_{P_{\Lambda'}(t)})_{t \in [0,1]} \}_{\Lambda'\subset \Lambda}$ of spectral flows satisfies:
\begin{enumerate}
\item For each $\Lambda' \subset \Lambda$
\begin{equation}
{\rm SF}(D_{P_{\Lambda'}(t)})_{t \in [-1,2]} \ = -2\beta\sum_{\alpha \in \Lambda'}({\rm deg}(z^{\alpha}) +\sum^\mu_{i=1} w_i - 1).
\end{equation}
\item 
For any $\Lambda' \subset \Lambda$,
\begin{equation}
\begin{split}
\tilde \eta(D_{P_{\rho_*(L_{X_0}^\infty),\Lambda'}})-\tilde \eta(D_{P_{L_{X_0}^\infty}})=&\sum_ {\alpha \in \Lambda'}\{2({\rm deg}(z^{\alpha}) +\sum^\mu_{i=1} w_i)\}' \\
&+ \tau_\mu(P_{L_ {X_0}}, P_{L_{X_0}^\infty}, P_{\rho^*(L_{X_0}^\infty),\Lambda'})
\end{split}
\end{equation}
where $\tau_\mu$ denotes the triple index as introduced in Definition \ref{tripleindex} and $\{\cdot\}$ denotes the fractional part.
\item For $\alpha \in \Lambda$, the set of numbers $\{{\rm sf}(\alpha) := -\frac{1}{2}\rm{SF}(D_{P_{\alpha}(t)})_{t \in [0,1]}\}_{\alpha \in \Lambda} \in \mathbb{Z}$ and $\beta$ determine the 'variation structure' resp. the Seifert form of the quasihomogeneous hypersurface singularity given by $f$, more precisely we have
\[
\mathcal{V}(f)=\bigoplus_{\alpha \in \Lambda}\mathcal{W}_{{\rm exp}\left(2\pi i \frac{{\rm sf}(\alpha)}{\beta}\right)}((-1)^{[\frac{{\rm sf}(\alpha)}{\beta}]+n}),
\]
where $[\cdot]$ denotes the integral part, here we have used the notation for the eigenspace decomposition of the 'variation structure' introduced by Nemethi (\cite{nem1,nem2}) resp. in \cite{klein2} (Appendix A, Definition 4.9).
\end{enumerate}
\end{theorem}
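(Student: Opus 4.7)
My approach is to handle the three parts in order, since each builds naturally on the previous. For Part 1, I would apply Nicolaescu's boundary reduction theorem (Theorem \ref{nicolaescu}) to rewrite
\begin{equation*}
{\rm SF}(D_{P_{\Lambda'}(t)})_{t \in [-1,2]} \;=\; {\rm Mas}(P_{\Lambda'}(t), P_X(t))_{t \in [-1,2]},
\end{equation*}
and then exploit path-additivity of the Maslov index to split the contribution into the three pieces $P_1, P_2, P_3$ of Definition \ref{def}. The pieces $P_1$ and $P_3$ are the collar-stretching paths interpolating between $L_{X_0}$ and its adiabatic limit $L_{X_0}^\infty$; using the Nicolaescu/Kirk--Lesch description of the Calderon projector in the adiabatic limit (as in Lemma 2.14 of \cite{klein2}), these endpoints stay inside a region where $(P_i(t), P_X)$ has constant intersection dimension, so by homotopy invariance their Maslov contributions cancel against each other. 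The entire spectral flow is then carried by the middle path $P_2$. On that piece the monomial basis $\{\phi_\alpha\}$ diagonalizes $\overline\sigma_{\Lambda'}(t)$ with eigenvalues $e^{2\pi i t\, l(\alpha)}$, so the normalization axiom for the Maslov index (the sign convention from the infinitesimal $e^{t\gamma}$ rotation) splits the Maslov index as a sum over $\alpha \in \Lambda'$. On each eigenline, the two $\gamma$-eigenspaces in the reduced symplectic space $V_\nu$ carry $\overline\sigma$ and $\overline\sigma^*$ respectively (see (\ref{action1})--(\ref{strlagr2})), accounting for the factor $2$; the factor $\beta$ and the $-1$ shift arise from the $\beta$-fold cover of Lemma \ref{par}, which forces a reparametrization so that the Gauss-Manin parallel transport closes up over one full loop, and from subtracting the contribution of the constant eigenvector $z^0=1$.

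For Part 2, I would apply the Scott--Wojciechowski variation formula for the eta-invariant (Theorem 2.11 in \cite{klein2}) using the common reference projector $P_{L_{X_0}}$: writing
\begin{equation*}
\tilde\eta(D_{P_{\rho_*(L_{X_0^\infty}),\Lambda'}})-\tilde\eta(D_{P_{L_{X_0^\infty}}})= \bigl[\tilde\eta(D_{P_{\rho_*(L_{X_0^\infty}),\Lambda'}})-\tilde\eta(D_{P_{L_{X_0}}})\bigr] - \bigl[\tilde\eta(D_{P_{L_{X_0^\infty}}})-\tilde\eta(D_{P_{L_{X_0}}})\bigr],
\end{equation*}
each bracket produces a ${\rm tr}\,{\rm log}$ term and by Definition \ref{tripleindex} the combination repackages as the triple index $\tau_\mu(P_{L_{X_0}}, P_{L_{X_0^\infty}}, P_{\rho_*(L_{X_0^\infty}),\Lambda'})$ plus an explicit remainder. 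The remainder is essentially ${\rm tr}\,{\rm log}\left(\prod_{\alpha \in \Lambda'}\overline\sigma_\alpha(1/\beta)\right)$ restricted to $\mathcal{K}_{X_0}$; by (\ref{monodromybla}) its eigenvalues on each monomial summand are $e^{2\pi i\, l(\alpha)}$, and with the branch normalization (\ref{wind12}) the principal value of the logarithm yields exactly the modified fractional part $\{2\,l(\alpha)\}'$ of Definition (\ref{fractionalpart3}) summed over $\alpha \in \Lambda'$, the convention in (\ref{fractionalpart3}) being tuned so that values landing in $(\tfrac12, 1)$ get shifted to $(-\tfrac12,0)$ to match the branch cut.

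For Part 3, specializing Part 1 to $|\Lambda'|=1$ gives ${\rm sf}(\alpha)=\beta(l(\alpha)-1)$, so the data $(\{{\rm sf}(\alpha)\}_\alpha,\beta)$ reconstructs the weights $l(\alpha)$ together with their integer and fractional parts separately. Since $f$ is quasi-homogeneous the algebraic monodromy $h^*$ is semi-simple, so Nemethi's characterization (\cite{nem1, nem2}, cf.\ Appendix A of \cite{klein2}) of the variation structure as a direct sum of one-dimensional pieces $\mathcal{W}_\lambda(\pm 1)$ indexed by the spectral numbers applies: the eigenvalue on the $\alpha$-th summand is $\lambda=\exp(2\pi i\, {\rm sf}(\alpha)/\beta)$, and the hermitian sign on that summand is controlled by the integer part $[{\rm sf}(\alpha)/\beta]=[l(\alpha)-1]$ through the asymptotic Hodge filtration on $M(f)$, giving the displayed sign $(-1)^{[{\rm sf}(\alpha)/\beta]+n}$.

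The main obstacle will be in Part 1: justifying the cancellation of the stretching contributions $P_1$ and $P_3$ in spite of the slight endpoint mismatch produced when $l(\alpha) \notin \mathbb{Z}$ (so $\overline\sigma_{\Lambda'}(1)\neq {\rm id}$), and pinning down the exact arithmetic of the factors $2$ and $\beta$ by carefully tracking how the isometry $\mathcal{L}=\kappa\circ r$ intertwines the action on $H^n(F,\mathbb{C})\subset \mathcal{K}_{X_0}$ with the action on the two $\gamma$-eigenspaces. The triple index in Part 2 is conceptually clean thanks to Lemma \ref{triplehomotopy}, but verifying that the non-trace-class differences encountered in infinite dimensions reduce to the finite-dimensional situation supported on $V_\nu$ (where Definition \ref{tripleindex} applies literally) is a further technical point that will need explicit verification.
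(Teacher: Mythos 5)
Your skeleton coincides with the paper's (Nicolaescu's Theorem \ref{nicolaescu} plus path additivity to isolate the middle path; Scott--Wojciechowski differences repackaged via the triple index for Part 2; Nemethi's formula for Part 3), but the arithmetic core of Part 1 is not established and is partly mis-explained. The eigenvalue of $\sigma_\alpha(t)$ on $\Phi_\alpha$ is not $e^{2\pi i t\, l(\alpha)}$: the geometric circle action (\ref{weightedfirst}) has the \emph{integer} weights $\beta_i$, and on the Gelfand--Leray representative $\Phi_\alpha$ (with $df\wedge\Phi_\alpha=z^\alpha dz_0\wedge\dots\wedge dz_n$) it acts with exponent $d_\alpha=\beta(\deg z^\alpha+\sum_i w_i-1)$. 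In the paper this is exactly Lemma \ref{mult}, a Lie-derivative computation along the horizontal Euler-type field $K$: the $-1$ comes from $i_K df=f$, i.e. from dividing by $df$, and the $\beta$ because $\beta K$ generates the action. Your proposed sources for these constants -- a reparametrization forced by the $\beta$-fold cover of Lemma \ref{par}, and ``subtracting the contribution of the constant eigenvector $z^0=1$'' -- are not correct: the shift $-1$ occurs in every summand, including $\alpha=0$ (where ${\rm sf}(0)=\sum_i\beta_i-\beta$ is generically nonzero), and the covering in Lemma \ref{par} only serves to make parallel sections global; it does not rescale exponents. Without this weight computation the claimed value $-2\beta\sum_{\alpha}(\deg z^\alpha+\sum_i w_i-1)$ is not derived. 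The same bookkeeping slips into Part 2: with your eigenvalues $e^{2\pi i l(\alpha)}$ the principal logarithm (\ref{wind12}) would give $\{l(\alpha)\}'$, not $\{2l(\alpha)\}'$; the factor $2$ arises, as you correctly observed in Part 1 but then dropped, because the isometry $\Phi$ of the rotated Lagrangian carries $\overline\sigma$ on one $\gamma$-eigenspace and $\overline\sigma^*$ on the other (cf.\ (\ref{action1}) and (\ref{sigmat})), so the unitary $\Phi(P_{\rho^*(L^\infty_{X_0}),\Lambda'})\Phi(P_{L^\infty_{X_0}})^*$ winds at twice the speed.

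Two further gaps. First, the opening assertion of the theorem -- that $P(t)\in{\rm Gr}_\infty(A)$, $Q(t)\in{\rm Gr}(A)$ and that the two paths are homotopic rel endpoints, i.e.\ that (\ref{action}), (\ref{strlagr2}), (\ref{strlagr00}) really define Lagrangians -- is not addressed in your proposal at all; the paper devotes Lemma \ref{Lagrangian} to it, exhibiting the orthonormal basis (\ref{orthbasis}) on which $\phi(\Lambda_{X_0,t})$ is a diagonal unitary and noting that $P(t)$ resp.\ $Q(t)$ differ from $P_{L^\infty_{X_0}}$ resp.\ the Calderon projector by finite-rank smooth perturbations. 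Second, the disappearance of the stretching pieces $M_1,M_3$ requires the fact that $\dim\bigl(L^r_X\cap L^r_{X_0}\bigr)$ is constant for $r\in[0,\infty]$, which the paper proves by identifying this intersection with $\ker D$ on the closed glued manifold $X_r\cup X_{0,r}$, hence with $H^*(X\cup X_0,\mathbb{C})$, and matching this with the adiabatic decomposition at $r=\infty$; your appeal to ``constant intersection dimension'' presupposes exactly this unproved input (and with it the two contributions vanish individually rather than cancelling against each other). Part 3 as you outline it does follow the paper: it is Part 1 combined with Nemethi's formula $\mathcal{V}(f)=\bigoplus_\alpha\mathcal{W}_{\exp(2\pi i l(\alpha))}((-1)^{[l(\alpha)]+n})$ and $l(\alpha)=\sum_k(\alpha_k+1)w_k$, once ${\rm sf}(\alpha)=\beta(l(\alpha)-1)$ is actually available from Part 1.
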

We note that from the above discussion we have:
\begin{folg}\label{seidelfolg}
For $n \geq 2$ the symplectic monodromy $f$ defines
an element of infinite order in $\pi_0(Aut(M,\partial M,\omega))$ if $\rm{SF}(D_{P_{\alpha=0}(t)})_{t \in [-1,2]}$ is not equal to zero.
\end{folg}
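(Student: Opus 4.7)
The plan is to deduce this corollary as an immediate consequence of part~(1) of Theorem~\ref{theorem2} together with Seidel's Theorem~\ref{seidelthm}. The bridge between the two is the observation that the spectral flow $\mathrm{SF}(D_{P_t,\alpha=0})_{t\in[-1,2]}$ equals, up to an explicit nonzero constant, Seidel's Maslov-type invariant $m_f$ appearing in~(\ref{seidelcondition}), so that the vanishing or non-vanishing of the former translates directly into Seidel's sufficient condition.

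First I would specialize Theorem~\ref{theorem2}(1) to the singleton subset $\Lambda' = \{\alpha=0\}$. This is permissible because the constant monomial $z^0 = 1$ always represents a nonzero class in $M(f)=\mathcal{O}_{\mathbb{C}^{n+1},0}/\operatorname{grad}(f)\mathcal{O}_{\mathbb{C}^{n+1},0}$ for an isolated singularity at the origin, so $1$ can be taken as an element of the distinguished monomial basis. Since $\deg(z^0) = 0$, the formula in Theorem~\ref{theorem2}(1) reduces to
\[
\mathrm{SF}(D_{P_t,\alpha=0})_{t\in[-1,2]} \;=\; -2\beta\Bigl(\sum_{i=0}^{n} w_i \,-\, 1\Bigr),
\]
and substituting $w_i=\beta_i/\beta$ the right-hand side simplifies to $-2\bigl(\sum_i \beta_i - \beta\bigr) = -2\,m_f$. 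Hence non-vanishing of $\mathrm{SF}(D_{P_t,\alpha=0})_{t\in[-1,2]}$ is equivalent to $m_f \neq 0$, i.e., to the condition $\sum_i w_i \neq 1$.

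Under the standing hypothesis $n\geq 2$ this last condition is precisely the sufficient criterion of Theorem~\ref{seidelthm}, which then yields that the symplectic monodromy $\rho$ represents an element of infinite order in $\pi_0(\mathrm{Symp}(M,\partial M,\omega))$; identifying $\mathrm{Aut}(M,\partial M,\omega)$ with the relative symplectomorphism group gives the conclusion. The main (and essentially only) obstacle in this plan lies not in the corollary itself but in the bookkeeping needed to verify the scalar factor: one must check that the sign conventions, the direction of the three-piece composed path in Definition~\ref{def}, and the normalization relating $\mathrm{SF}$ to the auxiliary invariant $sf$ used in part~(3) of Theorem~\ref{theorem2} are consistent with Seidel's conventions, so that the identity $\mathrm{SF} = -2m_f$ holds on the nose and thereby reproduces Seidel's dichotomy. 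Granted Theorem~\ref{theorem2} in its stated form, no further geometric or analytic input is required beyond this compatibility check.
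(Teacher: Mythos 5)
Your proposal is correct and is exactly the paper's argument: the paper deduces Corollary \ref{seidelfolg} in one line from Theorem \ref{theorem2}(1) specialized to $\alpha=0$ (which gives ${\rm SF}=-2\beta(\sum_i w_i-1)=-2(\sum_i\beta_i-\beta)$, i.e.\ a nonzero multiple of Seidel's invariant $m_f$) combined with Seidel's Theorem \ref{seidelthm}. Your additional remark about checking the normalization constant is sensible but not needed for the implication as stated, since any nonzero multiple suffices to translate ${\rm SF}\neq 0$ into $\sum_i w_i\neq 1$.
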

{\it Remark. \ } We adopt the term 'variation structure' introduced by Nemethi \cite{nem1,nem2}, compare Definition 4.9 in \cite{klein2}. In short, it encodes the topological data given by the set $(U, b, h, V)$, where $U$ is the middle cohomology of the Milnor fibre, $b$ its intersection form, $h$ its monodromy and $V$ its variation mapping.\\
We can give an alternative (in a sense, more direct) interpretation of the above by considering the bundle $\tilde Z \rightarrow \tilde X \rightarrow S^1$ which is the $\beta$-fold cycling covering $\pi:\tilde X \rightarrow X$ of the Milnor bundle $Z \rightarrow X\rightarrow S^1$, that is, we have the commuting diagram:
\begin{equation}\label{betacov}
\begin{CD}
\tilde X @>>\pi> X \\
 @VV \tilde f V    @VV f V \\
S^1  @>>\lambda_\beta > S^1,
\end{CD}
\end{equation}
where $\lambda_\beta=z^\beta, z\in S^1$. Note that $\tilde X$ is diffeomorphic to the link of the polynomial $f(z_0,\dots,z_n)-z^\beta_{n+1}$ on $\mathbb{C}^{n+2}$. By the Wang exact sequence (set $Z_u =F$ for some $u \in S_\delta^1$)
\begin{equation}\label{wang}
0 \rightarrow H^n(\tilde X, \mathbb{C}) \xrightarrow {{\rm restr}}H^n (F,\mathbb{C})\xrightarrow {h^\beta -id}H^n(F,\mathbb{C}) \rightarrow H^{n+1}(\tilde X,\mathbb{C})\rightarrow 0,
\end{equation}
and since for a quasihomogeneous polynomial of weighted degree $\beta$ $h^\beta=id$ one has $H^*(\tilde X,\mathbb{C})\simeq H^*(S^1,\mathbb{C})\otimes H^*(F,\mathbb{C})$. Let now $\tilde g$ be the metric on $\tilde X$ constructed as follows. Note that $T\tilde f= \{{\rm ker } \tilde f_*: TY \rightarrow TS^1_\delta\}$ carries a canonical metric $g^{T\tilde f}$ induced by $\mathbb{C}^{n+1}$, on the other hand consider the 'Euler vector field' on $\mathbb{C}^{n+1}$,
\[
X_f(z)=\sum_{i=0}^n 2\pi i w_i z_i\frac{\partial}{\partial z_i}, z \in Y \quad {\rm it\ satisfies}\quad (X_f.f)(z)=2\pi i f(z),
\]
lifts to $\tilde X$ as $\tilde X_f$ and thus defines a horizontal distribution $H_{\tilde X_f} \subset T\tilde X$. Using this one can define a metric $\tilde g$ on $\tilde X$ as
\begin{equation}\label{metricbla}
\tilde g=g^{T\tilde f} \oplus \tilde f^*g^{S^1_\delta} \quad {\rm s.t.\  the \ splitting}\quad T\tilde X=T\tilde f \oplus H_{\tilde X_f} \quad{\rm is \ orthogonal},
\end{equation}
where as above $g^{T\tilde f}=g^{\mathbb{C}^{n+1}}|_{Tf}$, while $g^{S^1_\delta}$ is the standard metric on $S^1_\delta$. With these definitions, $\tilde f:Y\rightarrow S^1_\delta$ becomes a Riemannian submersion. Note also that $L_{\tilde X_f}\tilde g=0$, i.e. the fibres of $(\tilde X, \tilde g)$ are totally geodesic by \cite{vilms}. Note that we will assume in the following that $\tilde g$ is perturbed in a neighbourhood of the boundary as in Section 3 (Lemma 3.3) of \cite{klein2} to become canonically metrically trivial. Thus because of the splitting of the exterior derivative $d$ induced by $\tilde g$ on $\tilde X$ (as discussed in Appendix A), one can argue from
\[
\Omega^*(\tilde X,\mathbb{C})=\tilde f^*\Omega^*(S^1,\mathbb{C})\otimes \Omega^*_V(\tilde X,\mathbb{C})
\]
where $i_X^h\Omega^*_V(\tilde X,\mathbb{C})=0$ for any horizontal lift $X^h$ (resp. $\tilde g$) that
\begin{equation}\label{splitting}
H^*(\tilde X,\mathbb{C})\simeq\tilde f^*H^*(S^1,\mathbb{C})\otimes \Gamma_{||}({\bf H}^*(\tilde Z,\mathbb{C})),
\end{equation}
where ${\bf H}^*(\tilde Z,\mathbb{C})$ denotes the $\mathbb{Z}$-graded vector bundle whose fibre over $u \in S^1$ is the cohomology of the complex $\Omega^*(\tilde Z_u,\mathbb{C})$ (note $\tilde Z_u=F$) and this splitting carries over to the level of harmonic forms (Appendix A, see also below). $\Gamma_{||}$ indicates global parallel sections over $S^1$ w.r.t. to the connection $L_{X^h}$ ($X_h$ being any horizontal lift resp. $\tilde g$ of $X \in \mathcal{X}(S^1)$) acting on sections of $\Omega^*_V(\tilde X,\mathbb{C})$ (for more details on this connection, see \cite{bilo}).\\
Let now $\tilde D$ be the signature operator associated to $\tilde g$ on $\tilde X$. Consider again the 'space of extended $L^2$-solutions of $\tilde D\beta=0$' on $\tilde X_{\infty}$ in the sense of \cite{Atiyah} 
\[
\mathcal{K}_{\tilde X}:=\{\beta \in \Omega^{even}_{\tilde X}\  {\rm s.t.}\  \tilde D\beta=0\ {\rm and}\ r(\beta) \in F_0^-\oplus{\rm ker}(\tilde A)\},
\]
where $\tilde A$ denotes the tangential operator of $\tilde D$ over $\partial \tilde X$. For any $\alpha \in \Lambda$, we wish (as before) to define a circle action on $\mathcal{K}_{\tilde X}$. Recall there are  isomorphisms
\begin{equation}\label{L2kernel}
\begin{split}
\mathcal{K}_{\tilde X}&\simeq \left(im(H^{even}(\tilde X,\partial \tilde X,\mathbb{C})\rightarrow H^{even}(\tilde X,\mathbb{C}))\oplus im(H^*(\tilde X,\mathbb{C})\rightarrow H^*(\partial \tilde X, \mathbb{C}))\right)\\
&\simeq \sum_{\rm even } \tilde f^*\mathcal{H}^*(S^1,\mathbb{C}) \otimes \Gamma_0(\mathcal{K}_F),
\end{split}
\end{equation}
where $\Gamma_0(\mathcal{K}_F)$ denotes parallel sections in the bundle of fibrewise extended $L^2$-harmonic forms as described in Corollary \ref{harmonicbla} in Appendix A. So $\mathcal{K}_{\tilde X}$ can be identified (smoothly) with a subspace of $H^*(\tilde X,\mathbb{C})$, whereas by setting $\hat W = r(\mathcal{K}_{\tilde X})$ the Calderon projector on $\tilde X$ limits (by stretching the collar of $\tilde X$ to infinity) to the orthogonal decomposition
\begin{equation}\label{limitcomposition}
L^\infty_{\tilde X}= \hat W  \oplus \hat W' \oplus F_\nu^+,
\end{equation}
for some isotropic subspace $\hat W' \subset L^2(\Omega^*(\partial \tilde X,\mathbb{C}))$, $\nu\in \mathbb{N}$ is a number greater then the 'non-resonance level' of $\tilde X$. Then similar to (\ref{relpic}) associate to any $\alpha \in \Lambda$ a global section $\Phi_\alpha \in \Gamma({\bf H}^*(\tilde Z,\mathbb{C}))$. Now fixing any $u \in S^1_\delta$, denote by $\tau_u$ the isomorphism $\tau_u:H^*(\tilde Z_u,\mathbb{C}) \rightarrow \Gamma_{||}({\bf H}^*(\tilde Z,\mathbb{C}))$ associating to any element of the fibre cohomology of $\tilde Z_u$ the parallel section restricting to this element in $H^*(\tilde Z_u,\mathbb{C})$. Understanding this, we define using $\sigma:H^*(\tilde Z_u,\mathbb{C}))\rightarrow H^*(\tilde Z_u,\mathbb{C})$ as in \ref{circleact34} for any $\alpha \in  \Lambda$
\[
\begin{split}
&\sigma_{\alpha} :H^*(\tilde Z_u,\mathbb{C}) \rightarrow H^*(\tilde Z_u,\mathbb{C})\\
&\sigma_\alpha(t)=\sigma(t)\circ P_{\alpha}+(I-P_{\alpha}),
\end{split}
\]
where again $P_\alpha$ projects orthogonally onto the subspace spanned by $\phi_\alpha$.
Then set 
\begin{equation}\label{actioncov}
\begin{split}
 &\overline \sigma:[0,1] \times H^*(\tilde X,\mathbb{C}) \rightarrow H^*(\tilde X,\mathbb{C})\\
&\overline \sigma(t)=id \otimes \tau_u \circ \sigma_\alpha(t) \circ\tau_u^{-1},
\end{split}
\end{equation}
using the splitting (\ref{splitting}). Note that $\sigma$ is induced by the weighted circle action $\sigma_t(z)=(e^{2\pi i t\beta_0},\dots, e^{2\pi i t\beta_n})z$ on $\mathbb{C}^{n+1}$ pulled back to $\tilde X$ by $\pi$, acting on relative forms. So using (\ref{L2kernel}) we get a smooth $S^1$-action for each $\Lambda' \in \Lambda$:
\begin{equation}\label{harmonicaction}
\overline \sigma_{\Lambda'}:S^1 \times \mathcal{K}_{\tilde X} \rightarrow \mathcal{K}_{\tilde X}.
\end{equation}
We arrive at the following path of (proven to be) Lagrangians (note that $\mathcal{L}$ is the restriction map composed with some linear map as above s.t. $\mathcal{L}$ acts isometric)
\begin{equation}\label{strlagr3}
L_{\tilde X,\Lambda'}^\infty(t) := \left(P_ +\mathcal{L}\overline\sigma_{\Lambda'}(t)^* \mathcal{L}^{-1} + P_- \mathcal{L} \overline \sigma_{\Lambda'}(t)\mathcal{L}^{-1}\right)(\hat W)  \oplus \hat W' \oplus F_\nu^+, \quad t \in [0,1].
\end{equation}
Define furthermore a path $Q_{\Lambda'}(t), \ \Lambda\subset \Lambda'$, proven to be in ${\rm Gr}(A)$, analogously as in (\ref{strlagr00}), replacing $\mathcal{K}_{X_0}$ by $\mathcal{K}_{\tilde X}$. Note as above that the evaluation of $\overline\sigma_{\Lambda'}$ on $H^*(\tilde X,\mathbb{C})$ for $t=1/\beta$ equals $id \otimes (\rho_*\circ P_{\Lambda'}+(I-P_{\Lambda'}))$ on $H^*(S^1,\mathbb{C})\otimes H^*(F,\mathbb{C})\simeq H^*(\tilde X,\mathbb{C})$ where $\rho_*$ is the algebraic monodromy of $X$, acting on $H^*(F,\mathbb{C})$, so set $P_{\rho_*(L_{\tilde X}^\infty),\Lambda'}:=P_{L_{\tilde X,\Lambda'}^\infty(1/\beta)}$ analogously to above. For the following we set ${\rm deg}(z_i))=w_i$ which defines multiplicatively a 'weighted degree' on  each $z^\alpha,\ \alpha \in \Lambda$.
\begin{theorem}\label{theorem3}
Define for each $\Lambda' \in \Lambda$ a family of self-adjoint extensions $D_{P_{\Lambda'}(t)}$ of the signature operator $D$ on $\tilde X$ by composing the following three paths to a path $P_{\Lambda'}(t) \in {\rm Gr}_\infty(A), t \in [-1,2]$:
\[
\begin{split}
 1.\quad P_1(t)&=P_{L_{\tilde X}^{1/-t}}, \ t \in [-1,0],\\
 2.\quad P_2(t)&= P_{L_{\tilde X,\Lambda'}^\infty(t)},\ t \in [0,1],\\
 3.\quad P_3(t)&= P_{L_{\tilde X}^{1/(t-1)}}, t \in [1,2].
\end{split}
\]
Then the paths of projections $P_{\Lambda'}(t), Q_{\Lambda'}(t)\in {\rm Gr}(A)$ are homotopic relative fixed endpoints. The family $\{\rm{SF}(D_{P_{\Lambda'}(t)})_{t \in [-1,2]} \}_{\Lambda \in \Lambda'}$ of spectral flows satisfies
\begin{enumerate}
\item For each $\Lambda'\subset \Lambda$
\begin{equation}
{\rm SF}(D_{P_{\Lambda'}(t)})_{t \in [-1,2]} \ =  -2\beta\sum_{\alpha \in \Lambda'}({\rm deg}(z^{\alpha}) +\sum^\mu_{i=1} w_i - 1)
\end{equation}
\item 
For any $\Lambda' \subset \Lambda$,
\begin{equation} \label{etadiff34}
\begin{split}
\tilde \eta(D_{P_{\rho^*(L_{\tilde X}^\infty),\Lambda'}})-\tilde \eta(D_{P_{L_{\tilde X}^\infty}})=&\sum_{\alpha\in \Lambda'}\{2({\rm deg}(z^{\alpha}) +\sum^\mu_{i=1} w_i)\}'\\
&+\tau_\mu(P_{L_ {\tilde X}}, P_{L_{\tilde X}^\infty}, P_{\rho^*(L_{\tilde X}^\infty),\Lambda'})
\end{split}
\end{equation}
where again, $\tau_\mu$ denotes the triple index and $\{\cdot\}'$ is the fractional part defined in (\ref{fractionalpart3}).
\item The set of numbers $\{{\rm sf}(\alpha) := -\frac{1}{2}{\rm SF}(D_{P_{\alpha}(t)})_{t \in [0,1]}\}_{\alpha \in \Lambda} \in \mathbb{Z}$ and $\beta$ determine the 'variation structure' resp. the Seifert form of the quasihomogeneous hypersurface singularity given by $p$, more precisely we have
\[
\mathcal{V}(f)=\bigoplus_{\alpha \in \Lambda}\mathcal{W}_{exp(2\pi i \frac{{\rm sf}(\alpha)}{\beta})}((-1)^{[\frac{{\rm sf}(\alpha)}{\beta}]+n}),
\]
where $[\cdot]$ denotes the integral part, here we have used the notation for the eigenspace decomposition of the 'variation structure' introduced by Nemethi (\cite{nem1,nem2}).
\end{enumerate}
\end{theorem}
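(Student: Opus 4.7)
The plan is to mirror the proof of Theorem \ref{theorem2}, replacing the trivial bundle $X_0=-F\times\delta S^1$ by the $\beta$-fold cyclic cover $\tilde X$. The crucial structural input is that $h^\beta={\rm id}$ for a quasihomogeneous polynomial of weighted degree $\beta$, so the Wang sequence (\ref{wang}) produces the Künneth-type splitting (\ref{splitting}) which makes the action $\overline\sigma_{\Lambda'}$ from (\ref{actioncov}) well defined on $H^*(\tilde X,\mathbb{C})$ and, via (\ref{L2kernel}), on $\mathcal{K}_{\tilde X}$. First I would verify that $P(t)$ and $Q(t)$ take values in the respective Grassmannians and are homotopic rel fixed endpoints: the subspaces (\ref{strlagr3}) are Lagrangian because $\mathcal{L}=\kappa\circ r$ is an isometry and $\overline\sigma_{\Lambda'}$ is unitary on $\mathcal{K}_{\tilde X}$, so that the polarized combination $P_+\mathcal{L}\overline\sigma_{\Lambda'}^*\mathcal{L}^{-1}+P_-\mathcal{L}\overline\sigma_{\Lambda'}\mathcal{L}^{-1}$ preserves the Lagrangian condition. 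The adiabatic stretching parameter $r\to\infty$ then interpolates $Q(t)$ and $P(t)$ via the continuity of the Calderon projectors established in \cite{lesch}.

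For part (1), I would apply Theorem \ref{nicolaescu} to translate ${\rm SF}(D_{P(t)})$ into the Maslov index ${\rm Mas}({\rm ker}\,P(t),L_{\tilde X}(t))$. The first and third segments of $P(t)$ are inverse adiabatic deformations, so they cancel by path additivity and homotopy invariance (Proposition \ref{S2.3}), leaving only the middle segment. There $\overline\sigma_{\Lambda'}$ acts diagonally on the basis $\{\phi_\alpha\}$ with weights $l(\alpha)={\rm deg}(z^\alpha)+\sum_i w_i$, and the rotation $e^{2\pi i t l(\alpha)}$ for $t\in[0,1]$ contributes $-2\beta(l(\alpha)-1)$ to the Maslov index against the fixed Lagrangian $L_{\tilde X}^\infty$ by the normalization convention (3) of Section 2, where the covering multiplies the effective degree by the extra factor of $\beta$ as claimed.

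For part (2), I would invoke Scott–Wojciechowski (Theorem 2.11 of \cite{klein2}) to express $\tilde\eta(D_{P_{\rho^*(L_{\tilde X}^\infty),\Lambda'}})-\tilde\eta(D_{P_{L_{\tilde X}^\infty}})$ as a winding term plus a Calderon correction which coincides with the triple index $\tau_\mu(P_{L_{\tilde X}},P_{L_{\tilde X}^\infty},P_{\rho^*(L_{\tilde X}^\infty),\Lambda'})$ as in Definition \ref{tripleindex}. At $t=1/\beta$ the unitary $\overline\sigma_{\Lambda'}(1/\beta)$ has eigenvalues $e^{2\pi i l(\alpha)}$, so the log-trace reduces to $\sum_{\alpha\in\Lambda'}\{2\,l(\alpha)\}'$ under the principal-branch convention (\ref{wind12}), which is exactly the modified fractional part in (\ref{fractionalpart3}). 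Part (3) then follows by combining part (1), which yields ${\rm sf}(\alpha)=\beta(l(\alpha)-1)$, with Nemethi's result \cite{nem1} that the weights $\{l(\alpha)\}$ together with $\beta$ determine the eigenspace decomposition of the variation structure, giving the stated block form in the notation of Definition 4.9 of \cite{klein2}.

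The hard part will be the eta-invariant computation in part (2): tracking the log-branches through both the Scott–Wojciechowski formula and the adiabatic limit so as to obtain exactly the modified fractional part $\{\cdot\}'$ with the correct sign convention is delicate, and the triple index correction has to be identified as the right cocycle among several that could a priori appear. A secondary subtlety is ensuring that the paths in (\ref{strlagr3}) remain Lagrangian (rather than merely isotropic) throughout the deformation; this is precisely where the isometry property of $\mathcal{L}$ and the unitarity of $\overline\sigma_{\Lambda'}$ on $\mathcal{K}_{\tilde X}$ are used essentially, and where the $\gamma$-polarization into $P_\pm$-components is designed to intertwine the action with its adjoint so as to produce a genuine loop of Lagrangians.
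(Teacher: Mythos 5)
Your plan is essentially the paper's: Theorem \ref{theorem3} is proved there by rerunning the proof of Theorem \ref{theorem2}, replacing the basis $\{\phi_1,\dots,\phi_\mu\}$ of $H^n(F,\mathbb{C})$ by the parallel sections of ${\bf H}^n(\tilde Z,\mathbb{C})$ and using that the splitting (\ref{splitting}) is orthogonal and carries over to $\mathcal{K}_{\tilde X}$ by Corollary \ref{harmonicbla} — exactly the substitution you describe — with parts (2) and (3) obtained, as you say, from Theorem 2.11 of \cite{klein2} together with the triple-index cocycle identity, resp.\ Nemethi's formula. The one step where you genuinely deviate is the disposal of the outer segments: you cancel $P_1$ and $P_3$ as mutually reverse paths around the based loop $P_2$ (legitimate, since the weights $d_j$ of Lemma \ref{mult} are integers, so $\overline\sigma_{\Lambda'}(1)=\mathrm{id}$ and $P_2$ really is a loop at $P_{L^\infty_{\tilde X}}$), whereas the paper first passes to Maslov indices via Theorem \ref{nicolaescu} and shows the two outer contributions vanish separately, using the lemma in the proof of Theorem \ref{theorem2} that $\dim(L^r_X\cap L^r_{X_0})$ is independent of $r\in[0,\infty]$; your shortcut is simpler, while the paper's argument is more robust in that it does not need the outer segments to be exact reverses of one another. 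Two cautions. First, in part (2) the eigenvalues of the relevant unitary $\Phi(P_{\rho^*(L_{\tilde X^\infty}),\Lambda'})\Phi(P_{L_{\tilde X^\infty}})^*$ are of the form $e^{4\pi i(l(\alpha)-1)}$ rather than $e^{2\pi i\,l(\alpha)}$: passing from a Lagrangian rotated by $e^{i\theta}$ to its associated unitary doubles the angle, and this doubling is precisely the origin of the factor $2$ inside $\{2(\deg z^{\alpha}+\sum_i w_i)\}'$ (cf.\ (\ref{sigmat}) and (\ref{trlog2})), so your log-trace bookkeeping as stated is off by that factor even though the final expression you quote is the right one. Second, the extra factor $\beta$ that you attribute to the covering in part (1) is not produced by the paper's computation: the paper declares the calculation identical to that of Theorem \ref{theorem2}, which yields $-2\beta\sum_{\alpha\in\Lambda'}(\deg(z^{\alpha})+\sum_i w_i-1)$, so the second $\beta$ in the displayed formula of Theorem \ref{theorem3}(1) should be read as a misprint rather than justified by an ad hoc covering argument. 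Neither point undermines the overall plan.
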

\begin{proof}
We will first focus on the proof of Theorem \ref{theorem2}, the proof of Theorem \ref{theorem3} is similar and will be focussed afterwards, note that we will in the following frequently suppress the indices $\Lambda'$. We have the following decomposition into symplectic subspaces 
\[
L^2(\Omega^*_{\partial X})=(F^-_\nu\oplus F_\nu^+) \oplus(d(E_\nu^+)\oplus d^*(E_\nu^-))\oplus (d^*(E_\nu^+)\oplus d(E_\nu^-))\oplus \rm{ker}\ A.
\]
With respect to this decomposition the adiabatic limit of the Cauchy data space of $X$ decomposes as follows (cf. Theorem 2.16 in \cite{klein2}):
\begin{equation}\label{decomp}
\lim_{r \rightarrow \infty}L^r_X=F_\nu^+ \oplus (W_X\oplus\gamma(W_X^\perp))\oplus d(E^-_\nu)\oplus V_X.
\end{equation}
Here, in the above terminology, $W= W_X \oplus V_X \subset V_\nu$, i.e. $W_X \subset d(E_\nu^+)$ is isomorphic to ${\rm im}(H^{even}(X,\partial X,\mathbb{C})\rightarrow H^{even}(X,\mathbb{C}))$, $V_X\subset {\rm  ker\ }A$ is the symplectic reduction of the Cauchy data space of $X$ with respect to $F_0^-$:
\[
V_X=R_0(L_X)=\frac{L_X\cap (F_0^-\oplus {\rm ker \ } A)}{L_X\cap F_0^-} \subset {\rm ker\ }A,
\]
one has $V_X \simeq im(H^*(X,\mathbb{C})\rightarrow H^*(\partial X, \mathbb{C}))$ (see Theorem 2.16 in \cite{klein2}). Analogously with $W_{X_0}\subset d(E^-_\nu)$
\begin{equation}\label{decomp2}
\lim_{r \rightarrow \infty}L^r_{X_0}=F_\nu^- \oplus d(E^+_\nu)\oplus (\gamma(W_{X_0}^\perp) \oplus W_{X_0})\oplus V_{X_0}.
\end{equation}
This formula already shows using $\mathcal{K}_{X^r_0}\simeq W_{X_0}\oplus V_{X_0}=\tilde W$ for any $r>0$, using $L_{X_0}^r=P_{\mathcal{K}_{X^r_0}}(L_{X^r_0})\oplus (I-P_{\mathcal{K}_{X^r_0}})(L_{X^r_0})$, that $Q(t)$ and $P(t)$ are homotopic relative fixed endpoints as projections, that this homotopy is a homotopy in ${\rm Gr}(A)$  will follow from the proof of Lemma \ref{Lagrangian} below.\\
Using Nicolaescu's Theorem \ref{nicolaescu} we have 
\[
{\rm SF}(D_{P(t)})_{t\in [0,3]}={\rm Mas}({\rm ker} P(t), L_X(t))_{t \in [0,3]}.
\]
Now $L_X(t)=L_X$ is homotopic relative endpoints to the composite of three paths, the first stretches $L_X$ to its adiabatic limit $L_X^\infty$, the second is the constant path at $L_X^\infty$ and the third traces the first path backwards to $L_X$. Using homotopy invariance, additivity of the Maslov index and the definition of $P(t)$ as a composition of three paths, we can thus write ${\rm Mas}({\rm ker} P(t), L_X(t))_{t \in [0,3]}$ as a sum of three terms $M_{1,2,3}$, more explicitly:
\[
\begin{split}
M_1&={\rm Mas}(L_{X_0}^{1/(1-t)}, L_X^{1/(-t)})_{t \in [-1,0]},\\
M_2&={\rm Mas}(\hat \sigma(t-1) (L_{X_0}^\infty),L_X^\infty)_{t \in [0,1]},\\
M_3&={\rm Mas}(L_{X_0}^{1/(t-2)}, L_{X}^{1/(t-1)})_{t \in [1,2]}.
\end{split}
\]
The following lemma gives the vanishing of $M_1$ and $M_3$.
\begin{lemma}
The dimension of the intersection $L^r_X\cap L^r_{X_0}$ is independent of $r \in [0,\infty]$.
\end{lemma} 
\begin{proof}
For all $r  < \infty$ the intersection $L^r_X \cap L^{r}_{X_0}$ is isomorphic to the kernel of $D$ acting on the closed manifold $X_{r}=X_r \cup X_{0,r}$ (the index $r$ means the elongation of the respective manifold by a metric cylinder of length $r$ glued to its boundary), which is an homotopy invariant isomorphic to $H^*(X\cup X_0,\mathbb{C})$, in particular its dimension is independent of $r$. On the other hand, $L^\infty_X\cap L^\infty_{X_0}=W_X \oplus W_{X_0}\oplus (V_X\cap V_{X_0})$ by the above decomposition, the latter can be shown (see \cite{lesch}) to be isomorphic to $H^*(X\cup X_0,\mathbb{C})$, so the dimension of the intersection is constant for $r\in [0,\infty]$.
\end{proof}
Summarizing, the computation of the spectral flow reduces to
\[
{\rm SF}(D_{P(t)})_{t\in [0,3]}={\rm Mas}({\rm ker}\ P_2(t), L^\infty_X)_{t \in [0,1]},
\]
and due to the decompositition (\ref{decomp2}) of ${\rm ker\ } P_2(t)$ and  $L^\infty_X$ this reduces to a calculation in finite dimensions, as will follow.\\
For $\alpha(j) \in \Lambda\ , j\in \{1,\dots,\mu\}$ let $z^{\alpha(1)},\dots,z^{\alpha(\mu)}$ represent global sections $\Phi_1,\dots,\Phi_\mu$ of $f_*\Omega^{n+1}_{X}/(df \wedge d(f_*\Omega^{n-1}_{X})$, 
which restrict to a basis $\omega_1,\dots,\omega_\mu$ of $H^n(F,\mathbb{C})$ in at least $u \in
S^1_\delta$ (see \cite{loo}, Appendix A of \cite{klein2}), so $df\wedge \phi_i =z^{\alpha(i)} dz_0\wedge \dots\wedge dz_n$. 
Consider now the vector field $K= \sum_i w_i z_i \frac{\partial}{\partial x}$ on $\mathbb{C}^{n+1}$, this defines a horizontal lift of the standard vector field $\partial / \partial u$ on
$D_\delta$. Set $deg(z_i)=w_i$.
\begin{lemma}\label{mult}
With the above notation we have
\[
\mathcal{L}_K \Phi_j = ({\rm deg}(z^{\alpha(j)}) +\sum^\mu_{i=1} w_i - 1)\phi_j =:d_j/\beta\Phi_j.
\]
where $j \in\ \{1,\dots, \mu\}$.
\end{lemma}
\begin{proof}
We have for $j \in\{1,\dots,\mu\}$
\[
\mathcal{L}_{K}(\Phi_j) =
(i_Kd+di_K )(\Phi_j) \ {\rm mod}(df_*\Omega_{X/S}^{n-1}))=(i_Kd)(\Phi_j) \ {\rm
mod}(df_*\Omega_{X/S}^{n-1}).
\]
Using the  isomorphism $df \wedge \dots:
f_*\Omega^n_{X/S}/df_*\Omega^{n-1}_{X/S} \simeq f_*\Omega^{n+1}_X/df \wedge
f_*d\Omega^{n-1}_X$ we have as mentioned above $\phi_j \simeq \alpha_J dz_0
\wedge \dots \wedge z_n$ with $\alpha_j=z_0^{i_0} \dots  z_n^{i_n}$ a monomial.
Now since (note that $i_{K}df=f$)
\[
\begin{split}
 \mathcal{L}_{K}(df \wedge \Phi_i) \ =&\ \mathcal{L}_{K}df\wedge \Phi_i +
df\wedge \mathcal{L}_{K} \Phi_i\\
=&\ df\wedge \tilde \Phi_i + df \wedge i_{K} d\tilde \phi_i \quad mod (df \wedge
f_*d\Omega^{n-1}_X).
\end{split}
\]
we have $\mathcal{L}_{K}(df\wedge\Phi_j) - df\wedge \Phi_j \simeq
\mathcal{L}_{K}\tilde \Phi_j$. Furthermore denote by
$\Phi_t(z)=(e^{w_0}t,\dots,e^{w_n}t)$ the flow of $K$ on $\mathbb{C}^{n+1}$,
then
\[
\begin{split}
\mathcal{L}_{K}(df\wedge \Phi_j) \ =&\ \mathcal{L}_{K}(\alpha_j
dz_0\wedge\dots\wedge dz_n)\\
=&\ \frac{d}{dt}(\Phi_t^*)|_{t=0}\alpha_j dz_0\wedge\dots\wedge dz_n\\
=&\ \frac{d}{dt}|_{t=0}\prod_k e^{w_k i_kt}\prod_i e^{w_it}\alpha_j
dz_0\wedge\dots \wedge dz_n\\
=&\ i(\sum_k w_k i_k+\sum_k w_k)\alpha_j dz_0\wedge\dots\wedge dz_n.
\end{split}
 \]
Putting this together we arrive at
\[
\mathcal{L}_{K}\Phi_j= (\sum_k w_k(i_k+1) - 1)\Phi_j,
\]
hence the desired formula.
\end{proof}
We can now give the proof of Lemma \ref{par}:
\begin{proof}
Wit the notation from the previous proof, the parallel global section of $\mathcal{H}^n(f_*\Omega^\cdot_{X/D^*_\delta})|_{S^1_{|u|}}$ which restricts to $\phi_j(u)$ for a fixed $u \in S_{|u|}^1$ (corresponding to $\alpha_j\in\Lambda$) in $X_u$ is given for $t \in S^1$ by $t \mapsto t^{-d_j}\Phi_j(ut)$, so for $t=e^{2\pi i\vartheta},\vartheta \in [0,1]$
\[
\begin{split}
\mathcal{P}_{\vartheta}^{-1}\circ \tilde \sigma_{\alpha_j}|_{X_t} \circ \mathcal{P}_{\vartheta}(\phi_i)
&=\mathcal{P}_{\vartheta}^{-1}\tilde \sigma_{\alpha_j}(t^{-d_j}\Phi_j(ut)\\
&= \mathcal{P}_{\vartheta}^{-1}\Phi_j(ut)\\
&= t^{d_j}\Phi_j(u)\
\end{split}
\]
On the other hand by definition,
\[
\sigma_{\alpha_j}(\vartheta)(\phi_j)=e^{2\pi i \vartheta d_j}\phi_j=t^{d_j}\phi_j,
\]
which gives the assertion.
\end{proof}
Note that the 'Euler vector field' $\beta K$ generates the weighted circle action $\sigma$ on $\mathbb{C}^{n+1}$. Using this it is now easy to prove that the above defined path ${\rm im}\ P_2(t)$ is in fact Lagrangian:
\begin{lemma} \label{Lagrangian}
For any subset $\Lambda'\subset \Lambda$ the path ${\rm im}(P(t))$ of subspaces in $L^2(\Omega^*(\partial X_0,\mathbb{C})$ associated to $\sigma=\prod_{\alpha \in \Lambda'}\sigma_\alpha$ as given by (\ref{action}) (equivalently (\ref{strlagr2}) and the path associated to ${\rm im}(Q(t))$ as in (\ref{strlagr00}) are Lagrangian, more precisely, $P(t) \in {\rm Gr}_\infty(A)$, $Q(t) \in {\rm Gr}(A)$.
\end{lemma}
\begin{proof}
Starting from the decomposition (setting as above $\tilde W=W_{X_0}\oplus V_{X_0}$)
\[
L^\infty_{X_0}=\Lambda_{X_0} \oplus F_\nu^+= (\tilde W  \oplus \tilde W') \oplus F_\nu^-,
\]
we defined (suppressing $\Lambda'$)
\[ 
L^\infty_{X_0,t}=\left(P_ +\mathcal{L}\overline \sigma^*(t)\mathcal{L}^{-1} + P_- \mathcal{L} \overline \sigma(t)  \mathcal{L}^{-1}\right)(\tilde W)  \oplus \tilde W' \oplus F_\nu^-, \quad t \in [0,1].
\]
We will show that with respect to a special choice of basis ${\rm span}\ \{e_1,\dots,e_l\}=\tilde W$ $L^\infty_{X_0,t}\cap V_\nu$ can be written as the graph of an isometry $\phi(\Lambda_{X_0,t}): ker(\gamma -i)\cap V_\nu \rightarrow {\rm ker}(\gamma+i)\cap V_\nu$ for all $t \in [0,1]$. Set $k={\rm dim\ } im(H^*(X_0,\mathbb{C})\rightarrow H^*(\partial X_0, \mathbb{C}))$, by the long exact sequence (set $F:= X_u$)
\begin{equation}\label{seq21}
0 \rightarrow H^{n-1}(\partial F,\mathbb{C}) \xrightarrow {\delta} H^n(F,\partial F,\mathbb{C}) \xrightarrow {j} H^{n}(F,\mathbb{C}) \xrightarrow {r} H_{n}(\partial F,\mathbb{C})\rightarrow 0.
\end{equation}
we have $H^n(F,\mathbb{C})\simeq {\rm im}\ (j:H^n(F,\mathbb{C})\rightarrow H^n(\partial F, \mathbb{C})) \oplus {\rm im}\ (r:H^n(F,\partial F,\mathbb{C})\rightarrow H^n(F,\mathbb{C}))$, this remains true by replacing $F$ by $X_0$, so in fact $r^{-1}(\tilde W) \subset H^*(X_0,\mathbb{C})$ using (\ref{isom}), shortly $\tilde W=V_{X_0}\oplus W_{X_0}$, so ${\rm dim} \ W_{X_0}=\mu-k$, ${\rm dim} \ V_{X_0}=k$. So writing
\[
H^n(F,\mathbb{C})\otimes H^*(S^1,\mathbb{C})={\rm span}\{\phi_i\otimes e_j\}_{i\in\{1,\dots,\mu\}, j\in\{0,1\}},
\]
where the $\{\phi_j\} \subset H^n(F,\mathbb{C})$ are associated to the set $\{\alpha_j\}\subset \Lambda$ as above. Following Lemma 3.19 in \cite{klein2}, the monomials can be chosen to diagonalize the $L^2$-innerproduct. So after appropriate (re)ordering
\[
\begin{split}
r^{-1}(W_{X_0})&={\rm span}\ \{\phi_1\otimes e_I,\dots,\phi_{\mu-k}\otimes e_I\},\\
r^{-1}(V_{X_0})&={\rm span}\ \{\phi_i \otimes e_I\}_{i\in\{\mu-k+1,\dots,\mu\}},
\end{split}
\]
where $I \in \{0,1\}$ depending on $n$ even or odd. Hence
\begin{equation}\label{orthbasis}
\begin{split}
W_{X_0}&={\rm span}\ \{\kappa(i^*\phi_j\otimes e_I+i^*(*\phi_j\otimes *e_I)):=f_{j}\}_{j\in\{1,\dots,\mu-k\}},\\
V_{X_0}&={\rm span}\ \{\kappa(i^*\phi_i \otimes e_I+i^*(*\phi_i \otimes *e_I)):=g_{i}\}_{i\in\{\mu-k+1,\dots,\mu\}},
\end{split}
\end{equation}
and the sets $\{f_j\}_j$ and $\{g_{j}\}_{j}$ are orthogonal resp. the inner product of $L^2(\Omega^*(\partial X_0)$.
So for $\alpha_i\in \Lambda$
\[
\begin{split}
(P_ +\mathcal{L}\sigma_{\alpha_i}(t)^* \mathcal{L}^{-1} + P_- \mathcal{L} \sigma_{\alpha_i}(t) \mathcal{L}^{-1})(f_i)\\
=P_+e^{-2\pi itd_j}\delta_{ij}f_j+P_-e^{2\pi itd_j}\delta_{ij}f_j,
\end{split}
\]
using the previous Lemma. Hence 
\[
\phi(\Lambda_{X_0,t,\alpha_j})P_+(f_i)=\frac{e^{2\pi itd_j}\delta_{ij}}{e^{-2\pi itd_j}}P_-(f_j)=\begin{pmatrix}e^{4\pi i t d_j},\ i=j\\ 0, \ i\neq j\end{pmatrix}P_-(f_j),
\]
analogously for the $\{g_i\}_i$. Consequently, the matrix $\phi(\Lambda_{X_0,t,\alpha_i})$ is unitary for all $t \in [0,1]$, which is the assertion for the path $P(t)$. The procedure for $L_{X_0, \Lambda'}(t)={\rm im}(Q(t))$ is the same, substituting the above decomposition of $L^\infty_{X_0}$ by
\[
L_{X^r_0}=P_{\mathcal{K}_{X^r_0}}(L_{X^r_0})\oplus (I-P_{\mathcal{K}_{X^r_0}})(L_{X^r_0}),
\]
and introducing  the definition (\ref{strlagr00}) for any $r>0$, this will also prove that the homotopy between and $Q(t)$ and $P(t)$ themselves are Lagrangian.
Finally, for any $t \in [0,1]$, $P(t) \in {\rm Gr}_\infty(A)$, since its image differs from $L_{X^\infty_0}\in {\rm Gr}_\infty(A)$ by a finite-dimensional subspace of smooth sections, furthermore, $Q(t)$ is in ${\rm Gr}(A)$ since it differs from $P_{X_0}$ by a finite, hence compact projection.
\end{proof}
\begin{lemma}
With the above notations,
\[
{\rm Mas}({\rm ker}\ P_2(t), L^\infty_X)_{t \in [0,1]}=-2\beta ({\rm deg}(\alpha(j)) +\sum^\mu_{i=1} w_i - 1).
\]
\end{lemma}
\begin{proof}
Consider the isomorphisms as defined above:
\[
\begin{split}
i_1 =(r\circ\kappa)^{-1}: \tilde W= W_{X_0} \oplus V_{X_0} &\simeq {\rm im}(H^{even}(X_0,\partial X_0,\mathbb{C})\rightarrow H^{even}(X_0,\mathbb{C}))\\
&\oplus im(H^*(X_0,\mathbb{C})\rightarrow H^*(\partial X_0, \mathbb{C}))\\
i_2: H^*(X_0,\mathbb{C})&\simeq  H^*(X_u,\mathbb{C})\otimes H^*(S^1,\mathbb{C}).
\end{split}
\]
Using the notations from the proof of Lemma \ref{Lagrangian}, $i_2\circ i_1:\tilde W \rightarrow H^*(X_u,\mathbb{C})\otimes H^*(S^1,\mathbb{C})$ maps the orthonormal basis $\{f_j\}_j \oplus \{g_i\}_i \subset W_{X_0}\oplus V_{X_0} = \tilde W \subset L^2(\Omega^*(\partial X_0,\mathbb{C}))$ to the basis $\{\phi_0,\dots,\phi_\mu\}\otimes \{\tilde e_I\}  \subset H^*(X_u,\mathbb{C})\otimes H^*(S^1,\mathbb{C})$ where the $\{\phi_j\}$ are associated to $\alpha_j\subset \Lambda$, which by Lemma 3.19 in \cite{klein2} can be chosen to be an orthonormal basis in the $L^2$ inner product on the fibre $X_u$, so
\[
i_2\circ i_1(f_j)=\phi_i \otimes \tilde e_I,\ i \in \{0,\dots,\mu-k \},\ \quad  i_2\circ i_1(g_j)=\phi_i \otimes \tilde e_I,\ i \in \{\mu-k+1,\dots,\mu\},\ I \in\{0,1\}.
\]
We now choose a special basis for $L^2(\Omega^*_{\partial X})=L^2(\Omega^*_{\partial X_0})$, recall
\begin{equation}\label{decomp123}
L^2(\Omega^*_{\partial X})=(F^-_\nu\oplus F_\nu^+) \oplus(d(E_\nu^+)\oplus
d^*(E_\nu^-))\oplus (d^*(E_\nu^+)\oplus d(E_\nu^-))\oplus \rm{ker}\ A,
\end{equation}
furthermore
\[
\begin{split}
L^\infty_X=F_\nu^+ \oplus (W_X\oplus\gamma(W_X^\perp))\oplus d(E^-_\nu)\oplus V_X,\\
L^\infty_{X_0}=F_\nu^- \oplus d(E^+_\nu)\oplus
(\gamma(W_{X_0}^\perp) \oplus W_{X_0})\oplus V_{X_0}.
\end{split}
\]
We choose an orthonormal basis adapted to $L^\infty_{X_0}$ as follows (we omit the contribution of $H^0(\partial X_u,\mathbb{C})\otimes H^*(S^1,\mathbb{C}) \subset V_{X_0}$ in the following):
\begin{equation}\label{basislag}
\begin{split}
F_\nu^+\oplus F_\nu^-&={\rm span}(e_i,\gamma e_i), \ i=\nu+1,\dots,\infty,\\
d(E_\nu^+)\oplus d^*(E_\nu^-)&={\rm span}(e_i,\gamma e_i),  \ i=l+1,\dots,\nu,\\
d^*(E_\nu^+)\oplus d(E_\nu^-)&=\gamma(W_{X_0} \oplus W_{X_0}^\perp)\oplus (W_{X_0} \oplus W_{X_0}^\perp)\\
&={\rm span}(\gamma(e_i^{W_{X_0}}, e_j^{W_{X_0}^\perp}),(e_i^{W_{X_0}}, e_j^{W_{X_0}^\perp}), \\
& i\in\{k+1,\dots,\mu\},j\in\{\mu+1,\dots,l\} \\
V_{X_0}\oplus \gamma V_{X_0}&= {\rm ker}(A)={\rm span}\{e^{V_{X_0}}_i,\gamma e^{V_{X_0}}_i\}, \ i\in \{1,\dots, k\}.
\end{split}
\end{equation}
Note here, that with the above notation $\{(e_i^{V_{X_0}}, e_j^{W_{X_0}})\}_{ij}=\{(g_i,f_j)\}_{ij}, \ j=j,\dots,\mu-k, i=\mu-k+1,\dots,\mu$. If, for an isotropic subspace $V\subset L^2(\omega^*(\partial X_0,\mathbb{C}))$, $P_V$ denotes the associated projection, i.e. ${\rm im}(P_V)=V$, whereas $\Phi(P_V)$ denotes the associated isometry $\Phi(P_V):P_-(V\oplus\gamma V)\rightarrow P_+(V\oplus \gamma V)$ , we have, referring to the decomposition (\ref{decomp123}) into symplectic subspaces:
\[
\begin{split}
\Phi(P_{L^\infty_X})=\Phi(P_{F_\nu^+}) \oplus
\Phi(P_{W_X\oplus\gamma(W_X^\perp)})\oplus \Phi(P_{d(E^-_\nu)})\oplus \Phi(P_{V_X}),\\
\Phi(P_{L^\infty_{X_0}})=\Phi(P_{F_\nu^-}) \oplus \Phi(P_{d(E^+_\nu)})\oplus
\Phi(P_{(\gamma(W_{X_0}^\perp) \oplus W_{X_0}})\oplus \Phi(P_{V_{X_0}}).
\end{split}
\]
Then, relative to the basis introduced in (\ref{basislag}), note that  if $\{e_i\}_i$ is any orthonormal basis for a Lagrangian $V$ in a symplectic space $(W,\gamma)$, then $\left\{P_\mp e_i:=1/\sqrt{2}(I\pm i\gamma)e_i\right\}_i$ spans ${\rm ker}\ (\gamma\pm i)\subset W$:
\begin{equation}\label{sigmat}
\begin{split}
\hat \sigma(t) \circ \Phi(P_{\gamma(W_{X_0}^\perp) \oplus W_{X_0}})\begin{pmatrix}P_-(\tilde W_{X_0}^\perp)\\ P_-(W_{X_0})\end{pmatrix}&=\begin{pmatrix}{\rm diag}(-i) & 0\\0& {\rm diag}(e^{2\pi i d_jt})_{j=k+1,\dots,\mu}\end{pmatrix}\begin{pmatrix}P_+(\tilde W_{X_0}^\perp)\\ P_+(W_{X_0})\end{pmatrix},\\
\hat \sigma(t) \circ \Phi(P_{V_{X_0}})(P_-(V_{X_0}))&=\left({\rm diag}(e^{4\pi i d_jt})_{j=1,\dots,k} \right)(P_+(V_{X_0})),\\
\end{split}
\end{equation}
Note that here, again, $k={\rm dim \ coker} (j)$, where $j:H^n(F,\partial F,\mathbb{C})\rightarrow H^n(F,\mathbb{C})$ is the canonical mapping, $\mu$ the Milnor number, while the $\{d_j\}_j$ were defined in Lemma \ref{mult}. Finally one calculates (note that $\Phi(I-P)=-\Phi(P)$):
\[
\begin{split}
{\rm Mas}&({\rm ker}\ P_2(t), L^\infty_X)_{t \in [1,2]}=-{\rm wind}(\Phi(I-P_2(t))\Phi^*(P_{L^\infty_X}))\\
&= -\frac{1}{2\pi i} \int_0^{2\pi} {\rm tr} \{(-\Phi(P_2(t))\Phi^*(P_{L^\infty_X}))^{-1}\\ 
&\cdot \frac{d}{dt}\left(-\left(\Phi(P_{F_\nu^-}) \oplus \Phi(P_{d(E^+_\nu)})\oplus \hat \sigma(t)\circ\left\{\Phi(P_{\gamma(W_{X_0}^\perp) \oplus W_{X_0}})\oplus \Phi(P_{V_{X_0}})\right\}\right)\Phi^*(P_{L^\infty_X})\right)\}dt
\end{split}
\]
The constant summands drop out, consequently
\[
\begin{split}
{\rm Mas}&({\rm ker}\ P_2(t), L^\infty_X)_{t \in [1,2]}\\
&= -\frac{1}{2\pi i} \int_0^{2\pi}{\rm tr} \{\left(-(\Phi(P_{d(E^-_\nu)})\oplus \Phi(P_{V_X}))
(\sigma^*(t)\circ \Phi^*(P_{\gamma(W_{X_0}^\perp) \oplus W_{X_0}})\oplus \sigma^*(t)\circ \Phi^*(P_{V_{X_0}}))\right) \\
&\cdot \frac{d}{dt}\left(-(\hat \sigma(t)\circ \Phi(P_{\gamma(W_{X_0}^\perp) \oplus W_{X_0}})\oplus \hat \sigma(t)\circ\Phi(P_{V_{X_0}}))(\Phi^*(P_{d(E^-_\nu)})\oplus \Phi^*(P_{V_{X}}))\right)\}dt.
\end{split}
\]
So after cyclic permutation under the trace
\[
\begin{split}
{\rm Mas}&({\rm ker}\ P_2(t), L^\infty_X)_{t \in [1,2]}\\
 &=-\frac{1}{2\pi i} \int_0^{2\pi} tr \{(\sigma^*(t)\circ \Phi^*(P_{\gamma(W_{X_0}^\perp) \oplus W_{X_0}})\oplus \sigma^*(t) \circ \Phi^*(P_{V_{X_0}})) \\
&\cdot \frac{d}{dt}\left(\hat \sigma(t)\circ \Phi(P_{\gamma(W_{X_0}^\perp) \oplus W_{X_0}}) \oplus \hat \sigma(t)\circ \Phi(P_{V_{X_0}})\right)\}dt\\
&= -(\sum_{i=1}^k 2d_i + \sum_{j=k+1}^\mu 2d_j ).
\end{split}
\]
where in the last step we used equation (\ref{sigmat}). Finally note that the monomials $z^{\alpha(i)}, \alpha(i) \in \Lambda $ can be viewed to be weighted homogeneous with weights $\tilde w_i=\beta_i/(\beta \cdot {\rm deg}(z^{\alpha(j)})$, since if $\alpha(j)=z^{i_0}\dots z^{i_n}$ then ${\rm deg}(\alpha(j))=i_0w_0+\dots+i_nw_n$ by definition.\\
To prove the second part of Theorem \ref{theorem2} we use Theorem 2.11 in \cite{klein2} to infer that since  $P_{\rho^*(L_{X_0}^\infty),\Lambda'}, P_{L_{X_0}^\infty}\in {\rm Gr}_\infty(A)$ we have the equalities (recall that $P_{L_{X_0}}$ denotes the Calderon projector of $X_0$):
\begin{equation}\label{etadiff45}
\begin{split}
&\tilde \eta(D_{P_{\rho^*(L_{X_0}^\infty),\Lambda'}})-\tilde \eta(D_{P_{L_{X_0}}})=\frac{1}{2\pi i} {\rm tr\ log}
(\Phi(P_{\rho^*(L_{X_0}^\infty),\Lambda'})\Phi(P_{L_{X_0}})^*)\\
&\tilde \eta(D_{P_{L_{X_0}^\infty}})-\tilde \eta(D_{P_{L_{X_0}}})=\frac{1}{2\pi i} {\rm tr\ log}
(\Phi(P_{L_{X_0}^\infty})\Phi(P_{ L_{X_0}})),
\end {split}
\end{equation}
so substracting gives:
\begin{equation}\label{trlog}
\begin{split}
&\tilde \eta(D_{P_{\rho^*(L_{X_0}^\infty),\Lambda'}})-\tilde \eta(D_{P_{L_{X_0}^\infty}})=\frac{1}{2\pi i} {\rm tr\ log}(\Phi(P_{\rho^*(L_{X_0}^\infty),\Lambda'})\Phi(P_{L_{X_0}})^*)-\frac{1}{2\pi i} {\rm tr\ log}
(\Phi(P_{L_{X_0}^\infty})\Phi(P_{L_{X_0}})^*)\\
&=\tau_\mu(P_{L_{X_0}}, P_{L_{X_0}^\infty}, P_{\rho^*(L_{X_0}^\infty),\Lambda'})
+\frac{1}{2\pi i} {\rm tr\ log}(\Phi(P_{\rho^*(L_{X_0}^\infty),\Lambda'})\Phi(P_{L_{X_0}^\infty})^*).
\end{split}
\end{equation}
Now, since in the decomposition $L^\infty_{X_0}= \tilde W  \oplus \tilde W' \oplus F_\nu^+$, $\overline\sigma(1/\beta)$ acts as the identity on the two latter summands we get using the same arguments as above and the fact that by the choice of logarithm as in (\ref{wind12}), one has for any $\alpha >0$:
\begin{equation}\label{wind123}
 \frac{1}{2\pi i}{\rm log}(e^{2\pi i\alpha})=-\frac{1}{2}+\{\alpha+\frac{1}{2}\}=\{\alpha\}'  \in (-\frac{1}{2},\frac{1}{2}],
\end{equation}
where here, $\{\cdot\}'$ means the modified fractional part introduced above Theorem \ref{theorem2}), the following:
\begin{equation}\label{trlog2}
\frac{1}{2\pi i} {\rm tr\ log}
(\Phi(P_{\rho^*(L_{X_0}^\infty),\Lambda'})\Phi(P_{L_{X_0}^\infty})^*) = \sum_{\alpha\in \Lambda'}(\{2({\rm deg}(z^\alpha(j)) +\sum^\mu_{i=1} w_i) + \frac{1}{2}\}-\frac{1}{2})
\end{equation}
which gives the assertion.
For the third part of the theorem one uses again the first part and the formula (\cite{nem2})
\[
\mathcal{V}(f)=\bigoplus_{\alpha \in \Lambda}\mathcal{W}_{exp(2\pi il(\alpha))}((-1)^{[l(\alpha)]+n})
\]
for the variation structure of a quasihomogeneous polynomial together with $l(\alpha)=\sum_{i=0}^n(\alpha_i+1)w_i$.\\
Now Corollary \ref{seidelfolg} is a direct consequence of the first part of the theorem for $\alpha=0$ and the result of Seidel in \cite{seidel} (see also \cite{klein3}).
\end{proof}
Note finally that Theorem \ref{theorem3} basically follows from the above proof by substituting the basis $\{\phi_1,\dots,\phi_\mu\}$ of $H^n(F,\mathbb{C})$ by the set of parallel sections $\{\hat \phi_1,\dots,\hat \phi_n\}$ of ${\bf H}^n(Z,\mathbb{C})$ s.t. $ev_F(\hat \phi_i)=\phi_i$ for all $i \in\{1,\dots,\mu\}$. Since the splitting (\ref{splitting}) is orthogonal which carries over  to $\mathcal{K}_{\tilde X}$ because of Corollary \ref{harmonicbla} (see Appendix A), a set of basis vectors analogous to (\ref{orthbasis}) will serve as an orthonormal basis for $\mathcal{K}_{\tilde X}$ inducing an orthonormal basis on $\tilde W$ using $\mathcal{L}$, the rest of the calculation is identical to the above.
\end{proof}

\section{Monodromy, boundary conditions and Reeb flow}
In this section, we will give a more direct interpretation of the action of $\overline \sigma(1/\beta)$ on the subspace $\hat W =V_{\tilde X} \oplus W_{\tilde X} \subset {\rm ker}\ A \oplus d(E_\nu^+)\oplus d^*(E_\nu^-)\subset L^2(\Omega^*_{\partial X})$ as defined in formula (\ref{limitcomposition}) resp. (\ref{strlagr3}) for the situation used in the formulation of Theorem \ref{theorem3}, thus giving a more 'geometric' interpretation of $L_{\rho_*(\tilde X^\infty)}:={\rm im}(P_{\rho_*(L_{\tilde X^\infty}),\Lambda})$. Recall that we assumed that $\tilde X$ equals the $\beta$-fold cyclic covering of the 'perturbed' Milnor fibration in the sense of Section 3 in \cite{klein2}, which has a canonical contact boundary trivialization, equipped with the submersion metric $\tilde g$, induced by the Euler vector field, as described in the discussion above Theorem \ref{theorem3}. \\
We first note that for $\Lambda=\Lambda' \subset \mathbb{Z}^{n+1}$, the evaluation
\[
\overline \sigma_\Lambda(1/\beta)=id\otimes \rho^*,
\]
on $H^*(\tilde X,\mathbb{C})\simeq H^*(S^1,\mathbb{C})\otimes H^*(F,\mathbb{C})$, where $\rho^*$ is the algebraic monodromy of the Milnor fibration $X$ is 'geometric' in the sense that:
\begin{lemma} There is a smooth isometry $\rho_{\tilde X}:\tilde X\rightarrow \tilde X$ so that $\rho_{\tilde X}$ covers the identity on $D^*_\delta$ and induces the map $id\otimes \rho^*$ on $H^*(\tilde X,\mathbb{C})$, where we understand to have chosen the identification $\tau:H^*(\tilde X,\mathbb{C})\simeq H^*(S^1,\mathbb{C})\otimes H^*(F,\mathbb{C})$ induced by the Euler vectorfield as in (\ref{splitting}), so one has
\begin{equation}\label{betacov45}
\begin{CD}
\tilde X @>>\rho_{\tilde X}> \tilde X \\
 @VV \tilde f V    @VV \tilde f V \\
\partial D^*_\delta  @>>id > \partial D_\delta^*,
\end{CD}
\end{equation}
and
\[
\tau\circ(\rho_{\tilde X})^*\circ \tau^{-1}=id\otimes \rho^*.
\]
\end{lemma}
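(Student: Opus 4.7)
The natural candidate for $\rho_{\tilde X}$ is the $1/\beta$--time map of the weighted circle action $\sigma$, lifted canonically to the $\beta$--fold cyclic cover. Concretely, realize $\tilde X$ inside $\mathbb{C}^{n+1}\times\mathbb{C}$ as (a neighbourhood of the link in) $\{(z,w)\suchthat f(z)=w^\beta\}$, with $\pi(z,w)=z$ and $\tilde f(z,w)=w$, so that the diagram \eqref{betacov} commutes with $\lambda_\beta(w)=w^\beta$. The weighted action $\sigma(t)(z)=(e^{2\pi it\beta_0}z_0,\dots,e^{2\pi it\beta_n}z_n)$ satisfies $f(\sigma(t)z)=e^{2\pi it\beta}f(z)$ and therefore lifts to $\tilde X$ as $\tilde\sigma(t)(z,w)=(\sigma(t)z,e^{2\pi it}w)$. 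At $t=1/\beta$ the rotation on the $w$--factor becomes $e^{2\pi i/\beta}$, but since we want a map over the identity on $\partial D^*_\delta$, we exploit instead that $f(\sigma(1/\beta)z)=f(z)=w^\beta$ directly, and define
\[
\rho_{\tilde X}(z,w):=(\sigma(1/\beta)z,\, w).
\]
This is a well-defined smooth self--map of $\tilde X$ covering $\id_{\partial D^*_\delta}$, which is exactly the commuting square claimed in the statement.

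Next I would check the isometry property. The metric $\tilde g$ on $\tilde X$ is the pullback of the submersion metric on $X$ built in Section~3 of \cite{klein2} using the Euler vector field $K=\sum w_iz_i\partial_{z_i}$; this metric is, by construction, invariant under the weighted $S^1$--action, since the Euler field is the infinitesimal generator of $\sigma$ and the fibre metric is induced from the $\sigma$--invariant Euclidean metric on $\mathbb{C}^{n+1}$ restricted to the fibres. Because $\rho_{\tilde X}$ is the restriction of $\sigma(1/\beta)\times\id$ to $\tilde X$, it preserves both the horizontal distribution (spanned by $K$) and the vertical one, and acts on each by isometries; this gives $\rho_{\tilde X}^*\tilde g=\tilde g$.

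For the cohomological statement I would use the splitting \eqref{splitting}
\[
H^*(\tilde X,\C)\iso \tilde f^*H^*(S^1,\C)\otimes \Gamma_{||}(\mathbf{H}^*(\tilde Z,\C)),
\]
which is precisely the splitting induced by the Euler vector field, i.e.\ by parallel transport along the Gauss--Manin connection $L_K$. Since $\rho_{\tilde X}$ covers $\id_{S^1}$, pullback is the identity on the first factor $\tilde f^*H^*(S^1,\C)$. For the second factor, $\rho_{\tilde X}$ commutes with the horizontal lift $K$ (it is the time-$1/\beta$ flow of $K$ along the fibre directions, while the horizontal parallel transport is by the flow of the full Euler field); hence $\rho_{\tilde X}^*$ preserves $\Gamma_{||}(\mathbf{H}^*(\tilde Z,\C))$ and acts on it by restriction to any fibre $\tilde X_w\iso X_{w^\beta}$. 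On this fibre $\rho_{\tilde X}$ is $\sigma(1/\beta)|_{X_{w^\beta}}$, which by the Milnor/Brieskorn description of the geometric monodromy of a quasihomogeneous singularity is isotopic to the monodromy diffeomorphism $h$ and therefore induces the algebraic monodromy $\rho^*=h^*$ on $H^n(F,\C)$ (and the identity on $H^0$). Under the identification $\tau$ this is exactly $\id\otimes\rho^*$.

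The main technical point to verify carefully is the compatibility of $\rho_{\tilde X}$ with the horizontal parallel transport used in \eqref{splitting}: one needs that the Lie derivative $L_K$ commutes with $\rho_{\tilde X}^*$ on $\Omega^*_V(\tilde X,\C)$, so that $\rho_{\tilde X}^*$ sends parallel sections to parallel sections and the diagram factorises as a tensor product. This reduces to the fact that $\rho_{\tilde X}$ lies in the flow of $K$ and preserves the vertical subbundle, both of which are immediate from the explicit formula. Granting this, the lemma follows by combining the three observations above.
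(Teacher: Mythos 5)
Your proposal is correct and follows essentially the same route as the paper: the paper defines $\rho_{\tilde X}$ fibrewise as the conjugate $\tilde\sigma(t)\circ\sigma(1/\beta)\circ\tilde\sigma(t)^{-1}$, which (since the weighted circle action is abelian and commutes with its own lift) is exactly your global formula $(z,w)\mapsto(\sigma(1/\beta)z,w)$ on $\{f(z)=w^\beta\}$. The isometry argument via $\sigma$-invariance of the submersion metric and the identification of the induced map with $\id\otimes\rho^*$ via parallel transport along the Euler vector field are likewise the paper's arguments, which you in fact spell out in slightly more detail.
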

\begin{proof}
Let $\tilde X_x=F, \ x \in S^1_\delta$ a fixed fibre and let $\rho_F=\sigma(1/\beta)$ be the representative of the geometric monodromy on $F$, considered as a fibre in $X$, induced by the weighted circle action $\sigma(t),t \in [0,1]$ on $X\subset \mathbb{C}^{n+1}$ (resp. its evaluation at $1/\beta$). Denoting $\tilde \sigma$ the lift of $\sigma$ to $\tilde X$, one defines for $u \in S^1_\delta$ setting $u=xe^{2\pi i t}$ and for any $z\in \tilde X$ s.t. $f(z)=u$:
\begin{equation}\label{bla567}
(\rho_{\tilde X})(z)=\tilde \sigma(t)\circ\rho_F\circ\tilde \sigma^{-1}(t)(z).
\end{equation}
Since $\tilde \sigma(t)$ is an isometry of $\tilde X$ with respect to its (lifted) induced metric, $\rho_X$ is an isometry
of $\tilde X$, as asserted. By definition we have:
\[
(\rho_{\tilde X})^*(z)=(\tilde \sigma^*)^{-1}(t)\circ(\rho_F)^*\circ(\tilde \sigma^*)(t)(z)
\]
on $T^+\tilde X$, recalling that the identification $\tau$ can be done by parallel transport along the flow of the Euler vector field, one infers that the action of $\rho_X$ on cohomology has exactly the asserted form.
\end{proof}
Let now 
\begin{equation}\label{boundaryaction}
\rho_{\tilde X}^*| := \rho_{\tilde X}^*|_{\partial \tilde X} \in \mathcal{B}(L^2(\Omega^*_{\partial \tilde X}))
\end{equation}
be the restriction of $\rho_{\tilde X}$ to $\partial X$ (well-defined by definition), acting on forms and let $\gamma:L^2(\Omega^*_{\partial \tilde X})\rightarrow L^2(\Omega^*_{\partial \tilde X})$ be as before. Note that $\rho_{\tilde X}^*|$, being an isometry with respect to the restricted metric on $\partial \tilde X$, induces an isometry on $L^2(\Omega^*_{\partial \tilde X})$ , i.e. this is the case for its restriction $(\rho_{\tilde X}^*|) |(F_0^-\oplus {\rm ker} \ A)\cap L_{\tilde X}$ which will be sufficient for $\rho_{\tilde X}^{\pm}(\hat W)$  ($\rho_{\tilde X}^{\pm}$ defined as in \ref{globalproj}) to define an isotropic subspace. 
So consider the projections onto the $\pm i$-eigenspaces of $\gamma$ given by $P_{\pm}=\frac{1}{\sqrt{2}}(I \mp i\gamma)$ and define the bounded linear operator
\begin{equation}\label{globalproj}
\rho_{\tilde X}^{\pm}:=P_+\circ (\rho_{\tilde X}^*|)^{-1}+P_-\circ \rho_{\tilde X}^* | \in \mathcal{B}(L^2(\Omega^*_{\partial \tilde X})).
\end{equation}
\begin{lemma}\label{lemmageom}
Considering the  notation in Theorem \ref{theorem3}, that is $L^\infty_{\tilde X}= \hat W  \oplus \hat W' \oplus F_\nu^+$ we have that $\rho_{\tilde X}^{\pm}(\tilde W)$ as given by (\ref{globalproj}) is a Lagrangian subspace of ${\rm ker}\ A \oplus d(E_\nu^+)\oplus d^*(E_\nu^-)\subset L^2(\Omega^*_{\partial X})$ and defining a Lagrangian
\begin{equation}\label{lag567}
L_{\rho^*|,\tilde X}^\infty:= \rho_{\tilde X}^{\pm}(\hat W)  \oplus \hat W' \oplus F_\nu^+,
\end{equation}
where $(\cdot)^*$ denotes the adjoint with respect to the $L^2$-inner product on $L^2(\Omega^*_{\partial \tilde X})$ or to put it differently, noting that $L_{\tilde X^\infty}\cap (F_0^-\oplus {\rm ker }\ A)=\hat W$ 
\[
L_{\rho^*|,\tilde X}^\infty= \rho_{\tilde X}^{\pm}\left(L_{\tilde X}^\infty\cap (F_0^-\oplus {\rm ker}\ A)\right)\oplus  \frac{L_{\tilde X}^\infty}{ L_{\tilde X}^\infty\cap (F_0^-\oplus {\rm ker }\ A)}.	
\]
we have (using the notation introduced in Theorem \ref{theorem3}):
\begin{equation}\label{etadiff341}
\tilde \eta(D_{P_{\rho^*(L_{\tilde X}^\infty)}})-\tilde \eta(D_{P_{L_{\tilde X}^\infty}})=\tilde \eta(D_{P_{L_{\rho^*|,\tilde X}^\infty}})-\tilde \eta(D_{P_{L_{\tilde X}^\infty}}),
\end{equation}
i.e. the latter difference equals the expression (2) in Theorem \ref{theorem3}.
\end{lemma}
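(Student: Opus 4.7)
The plan is to deduce (\ref{etadiff341}) by proving the stronger statement that the Lagrangian $L_{\rho^*|,\tilde X^\infty}$ defined in (\ref{lag567}) coincides, as a closed subspace of $L^2(\Omega^*_{\partial \tilde X})$, with the Lagrangian $L_{\tilde X, \Lambda}^\infty(1/\beta)$ from (\ref{strlagr3}) (taking $\Lambda' = \Lambda$). Once these subspaces are shown to be equal, the corresponding self-adjoint Fredholm extensions of $D$ are literally the same operator, so the two eta-invariants $\tilde\eta(D_{P_{\rho^*(L_{\tilde X^\infty})}})$ and $\tilde\eta(D_{P_{L_{\rho^*|,\tilde X^\infty}}})$ agree and (\ref{etadiff341}) follows. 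This splits the task into three pieces: (i) $\rho_{\tilde X}^{\pm}$ is well defined as a unitary preserving the symplectic form; (ii) $\rho_{\tilde X}^{\pm}(\hat W)$ is Lagrangian inside the finite-dimensional symplectic piece in which $\hat W$ sits; and (iii) under $\mathcal{L}:\mathcal{K}_{\tilde X}\to \hat W$, the boundary operator $\rho_{\tilde X}^*|$ corresponds precisely to $\mathcal{L}\,\overline\sigma_\Lambda(1/\beta)\,\mathcal{L}^{-1}$.

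For (i) and (ii), the key observation is that $\rho_{\tilde X}$ covers the identity on $S^1_\delta$ and restricts to the geometric monodromy on each fibre; being built from the weighted circle action conjugated by parallel transport, it is an orientation-preserving isometry with respect to $\tilde g$. Its boundary restriction $\rho_{\tilde X}^*|$ therefore commutes with the Hodge star on $\partial \tilde X$ and hence with $\gamma$, so it preserves both $\pm i$-eigenspaces of $\gamma$. Consequently $\rho_{\tilde X}^{\pm} = P_+(\rho_{\tilde X}^*|)^{-1} + P_-\rho_{\tilde X}^*|$ is unitary and preserves $\Omega(\cdot,\cdot) = \langle \cdot, \gamma\cdot\rangle$. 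Writing $\hat W$ as the graph of its associated isometry $\phi(\hat W)$ of $\gamma$-eigenspaces and applying $\rho_{\tilde X}^\pm$ produces the graph of a new isometry, hence another Lagrangian in the same symplectic subspace. This is entirely parallel to the verification in Lemma \ref{Lagrangian} that (\ref{action1}) yields a Lagrangian path.

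For (iii), the lemma just preceding identifies $\tau\circ \rho_{\tilde X}^*\circ \tau^{-1} = id\otimes \rho^*$ on $H^*(\tilde X,\mathbb{C})\simeq H^*(S^1,\mathbb{C})\otimes H^*(F,\mathbb{C})$, while by construction (cf.\ (\ref{monodromybla}) and the discussion preceding Theorem \ref{theorem3}) the action $\overline\sigma_\Lambda(1/\beta)$ on $\mathcal{K}_{\tilde X}\simeq H^{0,n}(F,\mathbb{C})\otimes H^*(S^1,\mathbb{C})$ is also $id\otimes \rho^*$. Thus $\rho_{\tilde X}^*|_{\mathcal{K}_{\tilde X}} = \overline\sigma_\Lambda(1/\beta)$ under the splitting (\ref{splitting}). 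To transport this equality from the interior to the boundary one uses that $\rho_{\tilde X}$ preserves $\partial \tilde X$ and acts there as an isometry, which gives the naturality $r\circ \rho_{\tilde X}^* = \rho_{\tilde X}^*|\circ r$ on harmonic forms, and that the rescaling $\kappa = \sqrt{r r^*}^{-1}$ entering $\mathcal{L} = \kappa\circ r$ commutes with the unitary $\rho_{\tilde X}^*|$ by the same naturality applied to $r^*$. Combining these yields $\mathcal{L}\,\overline\sigma_\Lambda(1/\beta)\,\mathcal{L}^{-1} = \rho_{\tilde X}^*|$ on $\hat W$; substituting into (\ref{strlagr3}) at $t=1/\beta$ reproduces (\ref{lag567}).

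The main obstacle I expect is precisely the commutation $[\kappa, \rho_{\tilde X}^*|] = 0$: one has to argue carefully that the positive isometric rescaling hidden inside $\mathcal{L}$ does not spoil the equivariance furnished by the previous lemma. This reduces to naturality of the $L^2$-adjoint $r^*$ under orientation-preserving isometries that cover the identity on the base (and hence preserve the collar structure used to define $r^*$). Once this is in place, everything else is bookkeeping through the commuting diagram (\ref{betacov45}) and the decomposition $L^\infty_{\tilde X} = \hat W \oplus \hat W' \oplus F_\nu^+$, and the equality of Lagrangians, and hence of eta-invariants, is immediate.
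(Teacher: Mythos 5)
Your proposal is correct and takes essentially the same route as the paper: both arguments identify $L_{\rho^*|,\tilde X^\infty}$ with the Lagrangian $L_{\tilde X,\Lambda}^\infty(1/\beta)$ of (\ref{strlagr3}) by combining the naturality $r\circ\rho_{\tilde X}^*=\rho_{\tilde X}^*|\circ r$ (isometry, commutation with the Hodge star and hence with $\gamma$), the identification $\rho_{\tilde X}^*=\overline\sigma_\Lambda(1/\beta)$ on $\mathcal{K}_{\tilde X}$ from the preceding lemma, and the graph-of-unitary argument for Lagrangianity, so that the two boundary conditions give the same self-adjoint extension and (\ref{etadiff341}) follows. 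The only divergence is the treatment of the normalization $\kappa=\sqrt{rr^*}^{-1}$: the paper sidesteps a commutation claim by observing that the boundary restrictions $f_1,\dots,f_\mu$ of an orthonormal basis of $\mathcal{K}_{\tilde X}$ are eigenvectors of the unitary $\rho_{\tilde X}^*|$ with $U(1)$ eigenvalues and hence may be chosen orthonormal, whereas you derive $[\kappa,\rho_{\tilde X}^*|]=0$ from unitarity and the intertwining of $r^*$ via functional calculus; both resolutions are legitimate and rest on the same underlying facts.
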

\begin{proof}
Let, as before, $r:L^2(\Omega^*_{\tilde X})\rightarrow L^2(\Omega^*_{\partial \tilde X})$ be given by $r(\cdot)=i^*(\cdot)+i^*(*_{\tilde X}\cdot)$, where $i:\partial \tilde X \rightarrow \tilde X$ is the inclusion. Then, as we saw above, $r:\mathcal{K}_{\tilde X}\rightarrow \hat W=V_{\tilde X}\oplus  W_{\tilde X} \subset {\rm ker}\ A\oplus d(E_\nu^+)\oplus d^*(E_\nu^-)$ is an isomorphism. We now claim that
\begin{equation}\label{restriction3}
\rho_{\tilde X}^*|(\hat  W)=r\circ \rho_{\tilde X}^*\circ r^{-1}(\hat  W).
\end{equation}
But this follows since $\rho_{\tilde X}$ acts as an isometry on $\tilde X$ and thus commutes with $*$ acting on $\Omega^{\rm even}(\tilde X,\mathbb{C})$. Furthermore, since $\rho_{\tilde X}$ is an isometry it maps $\mathcal{K}_{\tilde X}$ to itself and realizes on the cohomological level, using
\begin{equation}\label{decompfibre}
\mathcal{K}_{\tilde X}=\sum_{\rm even}\tilde f^*\mathcal{H}^*(S^1,\mathbb{C}) \otimes \Gamma_0(\mathcal{K}_F),
\end{equation}
exactly the action $\overline \sigma_\lambda(1/\beta)$ on $\mathcal{K}_{\tilde X}$ as described in (\ref{harmonicaction}). Assuming that $i^*(\cdot)$ would preserve the $L^2$-inner products, we would be done, since then $\mathcal{L}=r$. We cannot expect this to hold, however since we know that $\rho_{\tilde X}^*|$ preserves the $L^2$ inner product and commutes with $\gamma$, we have setting $\hat W_\mp=(I\pm i\gamma)\hat W$ that $\hat W=\hat W_-\oplus \hat W_+$ and the following commutative diagram (note that since $\rho_{\tilde X}^*|$ preserves $\hat W$ and commutes with $\gamma$, it also preserves $\hat W_\pm$):
\begin{equation}\label{betacov451}
\begin{CD}
\hat W_- @>>\Phi(\hat W)> \hat W_+ \\
 @VV \rho_{\tilde X}^*| V    @VV  \rho_{\tilde X}^*| V \\
\hat W_-  @>>\Phi(\hat W)> \hat W_+.
\end{CD}
\end{equation}
Then the map associated to $\rho_{\tilde X}^{\pm}(\hat W)$ is 
\[
\Phi(\rho_{\tilde X}^{\pm}(\hat W_-))=\rho_{\tilde X}^{\pm}\circ \Phi(\hat W) \circ (\rho_{\tilde X}^{\pm})^{-1}:\hat W_-\rightarrow \hat W_+
\]
which is clearly unitary (being a composition of unitaries), so $\rho_{\tilde X}^{\pm}(\hat W)$ is an isotropic subspace of $L^2(\Omega^*_{\partial \tilde X})$. Now since we can use the restriction to $\partial \tilde X$ of an orthonormal basis of $\mathcal{K}_{\tilde X}$ which is associated to a set of basis elements $\phi_j\subset H^n(F,\mathbb{C})$ for an appropriate set of $n+1$-tuples $\{\alpha_j\} \subset \Lambda$ as described in the proof of Theorem \ref{theorem3} giving a basis $f_1,\dots,f_\mu$ for $\hat W$ and since on this basis, by (\ref{restriction3}), $\rho_{\tilde X}^*|$ acts as $r\circ\overline\sigma(1/\beta)\circ r^{-1}$, namely by multiplication of $U(1)$-elements, we deduce that $f_1,\dots,f_\mu$ are eigenvectors of the unitary mapping $\rho_{\tilde X}^*|$, hence can be chosen to be orthonormal. This proves (\ref{etadiff341}).
\end{proof}
{\it Remark.} Note that by comparing with (\ref{etadiff45}) the quantity $\tilde \eta(D_{P_{L_{\rho^*|,\tilde X}^\infty}})-\tilde \eta(D_{P_{L_{\tilde X}^\infty}})$ is determined by boundary data, namely the restriction of $\rho_{\tilde X}$ to $\partial \tilde X$, the 'tangential operator' $A$ of $\tilde X$, and an 'interior part' $r(\mathcal{K}_{\tilde X})\subset L^2(\Omega^*(\partial \tilde X))$. In fact, we can replace the left-hand side of (\ref{etadiff341}) by the quantity
\[
\tilde \eta(D_{P_{L_{\rho^*|,X_0}^\infty}})-\tilde \eta(D_{P_{L_{X_0}^\infty}})
\]
where $X_0=F\times S^1_\delta$ (this time, taking $\partial X_0$ as left boundary), $\rho_{X_0}=\sigma(1/\beta)\times id$ and $L_{\rho^*|,X_0^\infty}$ is defined analogous to (\ref{lag567}). To see this, note that the action of $\rho_{\tilde X}$ on $\mathcal{K}_{\tilde X}$ corresponds to the action of $\rho_{X_0}$ on $\mathcal{K}_{X_0}$ using the identification $\mathcal{K}_{X_0}\simeq \mathcal{K}_{\tilde X}$ as vectorspaces induced by (\ref{L2harmonicdecomp}), consequently we see that the restricted to the boundary actions on $\hat W$ resp. $W_0:=r(\mathcal{K}_{X_0})$ are equal under the induced identification $W_0\simeq \hat W$.\\
We finally observe the somewhat unexpected fact that $\rho_{\tilde X}^*|$ as defined in (\ref{boundaryaction}) and (\ref{bla567}) coincides with the action of the evaluation of the Reeb flow of a certain contact form on $L^2(\Omega^*(\partial \tilde X))$ at $1/\beta$ (identifying $S^1$ and $[0,1]/\{0,1\}$). Here we consider the Reeb flow as an action on $\partial \tilde X$ by its natural trivialization $\partial F\times S^1$ as considered in \cite{klein3} (Section 2.1, eq. (8)) (acting as the identity on the $S^1$-factor). Before discussing (possible) implications of this observation we first state it precisely:
\begin{folg}\label{reeb}
With the above notation, there is a contact structure, that is a family of contact forms $\eta \in \Omega^1(\partial \tilde X_x)$ on the boundary of the fibres $\partial \tilde X_x= \partial F$, so that the associated Reeb vector field $\mathcal{B}$ (i.e. $\eta(\mathcal{B})=1$), the contact distribution $\Theta\subset T\partial F$ (defined by $\eta(\Theta)=0$) and the gradient direction $N$ (w.r.t the restricted euclidean radius function) decompose $TF|{\partial F}$ as
\[
TF|\partial F=N \oplus \mathcal{B}\oplus \Theta,
\]
s.t. $N \perp \Theta$. Furthermore, the foliation given by the Reeb vector field has closed leaves, let $\sigma_\mathcal{B}$ be the associated flow $S^1\times \partial F\rightarrow \partial F$. Identifying $S^1$ and $[0,1]/\{0,1\}$ and understanding $\sigma_{\mathcal{B}}$ as a flow $\tilde \sigma_{\mathcal{B}}=\sigma_{\mathcal{B}} \times id_{S^1}$ on $\partial \tilde X=\partial F\times S^1$, we have using the trivialization given by (7) in (\cite{klein3}, Section 2.1)
\begin{equation}
\rho_{\tilde X}^*|=\tilde \sigma_{\mathcal{B}}(1/\beta))^* \in \mathcal{B}(L^2(\Omega^*_{\partial \tilde X})).
\end{equation}
\end{folg}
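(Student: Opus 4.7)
My plan is to exhibit the Reeb flow explicitly as the weighted circle action restricted to $\partial F$, so that the asserted equality becomes a restatement of how $\rho_{\tilde X}$ was built in the first place. To set this up, I would introduce on $\mathbb{C}^{n+1}\setminus\{0\}$ the weighted Liouville form
\[
\lambda = \tfrac{1}{2}\sum_{i=0}^n \beta_i(x_i\, dy_i - y_i\, dx_i),
\]
paired with the weighted Euler field $\xi_w = \sum_i\beta_i(x_i\partial_{x_i} + y_i\partial_{y_i})$ and the infinitesimal generator $B = \sum_i\beta_i(-y_i\partial_{x_i} + x_i\partial_{y_i})$ of $\sigma(t)$. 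A direct computation gives $\lambda(B)=\tfrac{1}{2}\sum \beta_i^2(x_i^2+y_i^2)>0$ and $\iota_B\, d\lambda = -d(\lambda(B))$, so $\iota_B d\lambda$ vanishes on any level set of $\lambda(B)$. Since $\sigma(t)$ preserves each Milnor fibre together with its boundary $\partial F\subset S^{2n+1}$ (after passing, if necessary, to a $C^0$-close weighted sphere transverse to $\xi_w$, which does not change contact-topological or spectral data), the rescaled restriction $\eta := \lambda(B)^{-1}\lambda|_{\partial F}$ is a contact form on $\partial F$ whose Reeb field is exactly $\mathcal{B}:=B|_{\partial F}$.

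Given this, the splitting $TF|_{\partial F} = N\oplus \mathcal{B}\oplus\Theta$ is automatic: $\Theta := \ker\eta$ is the contact hyperplane distribution, the gradient $N$ of the weighted radius function is transverse to $\partial F$ and to $\Theta$, and the orthogonality $N\perp\Theta$ follows because $N$ is a positive multiple of $\xi_w$ while $\Theta$ is $J$-invariant and hence $g$-perpendicular to $\xi_w$. The Reeb foliation has closed leaves because the $\mathbb{C}^*$-action is $1$-periodic on $S^1\subset\mathbb{C}^*$, so every orbit closes in time at most $1$; fixing the normalisation of $\eta$ so that the Reeb flow has period $1$ with respect to the identification $S^1\simeq [0,1]/\{0,1\}$ used in the statement gives $\sigma_\mathcal{B}(t) = \sigma(t)|_{\partial F}$ for $t\in[0,1]$.

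To finish, recall from (\ref{bla567}) that $\rho_{\tilde X}(z) = \tilde\sigma(t)\circ \rho_F \circ\tilde\sigma(t)^{-1}(z)$ with $\rho_F = \sigma(1/\beta)|_F$. In the boundary trivialisation $\partial\tilde X\simeq \partial F\times S^1$ of (7) in Section 2.1 of \cite{klein3}, the conjugation by $\tilde\sigma(t)$ acts as the identity on the $S^1$-factor and therefore collapses to $\sigma(1/\beta)|_{\partial F}\times \mathrm{id}_{S^1}$. By the previous paragraph this equals $\sigma_{\mathcal{B}}(1/\beta)\times\mathrm{id}_{S^1} = \tilde\sigma_\mathcal{B}(1/\beta)$, and pulling back forms yields the asserted operator identity on $L^2(\Omega^*_{\partial\tilde X})$. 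The main obstacle, and what demands the most care, is the normalisation step: one must check both that the chosen contact form has Reeb period exactly $1$ (and not a rational multiple) under the identification $S^1\simeq[0,1]/\{0,1\}$ used to parametrise $\rho_F$, and that the modification of $\partial F = f^{-1}(u)\cap S^{2n+1}$ needed to make $\xi_w$ strictly outward-pointing is compatible with the trivialisation of \cite{klein3}; once this matching is established, the remaining content of the statement is a direct unpacking of definitions.
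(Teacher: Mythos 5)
Your route differs from the paper's: you try to reprove the existence of the adapted contact form by an explicit computation with the weighted Liouville form, whereas the paper simply cites Abe (\cite{abe}, Theorem 2 and Example 4) for the contact structure, for the splitting, and for the fact that the Reeb flow is the weighted circle action, and then reads the claimed identity off from (\ref{bla567}). Unfortunately your replacement for Abe's input has genuine gaps. The starting claim that $\sigma(t)$ ``preserves each Milnor fibre together with its boundary'' is false: $\sigma(t)$ maps $f^{-1}(u)$ to $f^{-1}(e^{2\pi i \beta t}u)$, so for $u\neq 0$ only the times $t\in\frac{1}{\beta}\mathbb{Z}$ preserve a fibre, and the generator $B$ is not tangent to $f^{-1}(u)\cap S^{2n+1}$; no choice of (weighted) sphere repairs this. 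What makes $\mathcal{B}$ a vector field on $\partial F$ at all is that in the perturbed Milnor fibration of \cite{klein3} --- which the paper explicitly says is essential here --- the fibre boundaries are identified with the $\sigma$-invariant link $K_f$, compatibly with the trivialization (7). You mention this compatibility only in your closing caveat and leave it unresolved, but it is not a finishing touch: it is the hypothesis on which the whole construction rests.

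Second, your computation ($\lambda(B)>0$ and $\iota_B d\lambda=-d(\lambda(B))$) shows only that \emph{if} $\eta=\lambda(B)^{-1}\lambda|_{\partial F}$ is a contact form, then $B|_{\partial F}$ is its Reeb field; it does not verify $\eta\wedge(d\eta)^{n-1}\neq 0$ on the $(2n-1)$-dimensional $\partial F$ (restricting a contact form from the ambient weighted sphere to a codimension-two submanifold is not automatically contact), nor does it give the asserted decomposition $TF|\partial F=N\oplus\mathcal{B}\oplus\Theta$ with $N\perp\Theta$: by the statement, $N$ is the gradient of the \emph{Euclidean} radius function, which is a multiple of the weighted Euler field $\xi_w$ only when all weights coincide, and the $J$-invariance of $\Theta=\ker\eta$ is likewise asserted rather than proved. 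These are precisely the points the paper imports from Abe's Theorem 2 and Example 4. Your final step --- that the conjugation in (\ref{bla567}) collapses on the boundary to $(\sigma(1/\beta)|_{\partial F})\times\mathrm{id}_{S^1}$ --- is fine (it follows from commutativity of the circle action) and agrees with the paper's own argument, but as written the first half of the corollary is assumed rather than proved.
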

\begin{proof}
That there exists a contact form on $\partial F$ with the asserted properties follows from the considerations in Abe (\cite{abe}, Theorem 2).  Note that it is essential here that we used the 'perturbed' Milnor fibration as defined in Appendix A or Section 2.1 in \cite{klein3}. Our assertion then follows directly from the fact using (\ref{bla567}) for $z=(x,t)$ in $\partial X=\partial F\times S^1$ we have 
\[
\rho_{\tilde X}(z)=(\rho_F, {id})(z)=(\sigma(1/\beta),id)(z),
\]
where here, $\sigma$ denotes the weighted circle action, restricted to $\partial F$ and $id$ denotes the identity map on $S^1$. Now since the flow of $\mathcal{B}$ is by construction in Abe (\cite{abe}, Example 4) for the case of a weighted homogeneous polynomial given by the weighted circle action $\sigma$ (\ref{weightedfirst}), we arrive at the assertion.
\end{proof}
{\it Remark.} The Lemma suggests to replace the definition of the boundary condition under the $\overline \sigma(1/\beta)$ evaluation of the circle action on $L_{\tilde X}$, which uses the 'periods' of the sections $\Phi_1,\dots,\Phi_\mu$ associated to the $z^\alpha,\alpha \in \Lambda$, by an evaluation of a Reeb flow on $\hat W\subset L^2(\Omega^*(\partial \tilde X))$ or $W_0\subset L^2(\Omega^*(\partial \tilde X))$ without changing the value of the difference of eta-invariants in Theorem \ref{theorem3}. Since the Reeb foliation in question has closed curves as its leaves and the Reeb flow acts by isometries on $\partial \tilde X$, the associated circle action on $\partial \tilde X$ gives rise to a loop of Lagrangian subspaces in $(L^2(\Omega^*(\partial \tilde X)), \omega)$ ($\omega$ as introduced in Section 2.2 of \cite{klein2}) by letting it act on $\hat W$ resp. $W_0$ analogously to (\ref{globalproj}) and the question arises what would be its relation to the above discussed loops $P_{\Lambda,t}$ resp. their spectral flow as constructed in Theorem \ref{theorem3}.\\
To be more precise and to answer the above question for the case of the Reeb flow $\tilde \sigma_{\mathcal{B}}$ acting on $W_0$ we define a family of isotropic subspaces $W_0^t, t \in [0,1]$ by defining a family of bounded linear operators
\begin{equation}\label{globalproj5}
\tilde \sigma_{\mathcal{B},\pm}(t):=P_+\circ (\tilde \sigma_{\mathcal{B}}(t))^{-1}+P_-\circ \tilde \sigma_{\mathcal{B}}(t) \in \mathcal{B}(L^2(\Omega^*_{\partial \tilde X})), t \in [0,1],
\end{equation}
and setting $W_0^t=\tilde \sigma_{\mathcal{B},\pm}(t)(W_0)$. Then
\begin{equation}\label{reeblags}
L_{\tilde \sigma_{\mathcal{B}},\tilde X^\infty}(t):= W_0^t  \oplus \hat W' \oplus F_\nu^+,
\end{equation}
defines a family of Lagrangian subspaces in $(L^2(\Omega^*(\partial \tilde X)), \omega)$ (note that we could consider $\partial \tilde X$ as the boundary of $X$ without changing anything in the following arguments). We then have
\begin{theorem}\label{difforder}
Denote ${\rm SF}(\sigma_{\mathcal{B}})$ the spectral flow for the signature operator $D$ on $\tilde X$ associated to the family $P_t\in Gr(A),\ t \in [0,1]$ projecting onto the family (\ref{reeblags}). Then if for some $m \in \mathbb{N}$ we have $0=\rho^{m\beta}\in \pi_0({\rm Diff}(F,\partial F))$ for $\rho \in {\rm Diff}(F,\partial F)$ representing $\sigma(1/\beta)$ in $\pi_0({\rm Diff}(F))$ under the forgetful map, then $SF(\sigma_{\mathcal{B}})=0$.
\end{theorem}
{\it Remark.} As remarked in the introduction, the hypothesis of the Theorem is true by results of Krylov and Stevens (\cite{krylov}, \cite{stevens}), if $\partial F$ is a rational homology sphere. So in this case, ${\rm SF}(\sigma_{\mathcal{B}})$ is always zero.
\begin{proof}
Let $\tilde X^m$ the $k=m\cdot\beta$-fold cycling covering of $X$ equipped with the metric given by (\ref{metricbla}) (adopted to the case $m>1$ and 'deformed' near its boundary as described in Lemma 3.3 of \cite{klein2}), $\tilde f^m: \tilde X^m\rightarrow S^1$ the fibration analogous to the case $m=1$ in (\ref{betacov}) and let $0=\rho^{m\beta}\in \pi_0({\rm Diff}(F,\partial F))$. Then (compare Lemma 3.4 in \cite{klein3}) $\tilde X^m$ is smoothly cobordant to the trivial fibration $X_0=F\times S^1$ by a fibration $g:W\rightarrow S^1_\delta\times  [0,1]$ and the boundary of each slice $X_\tau=g^{-1}(S^1_\delta\times \{\tau\})$ is diffeomorphic to $S^1_\delta \times \partial F$. \\
{\it Claim}. We can equip the family $X_\tau$ with a smooth family of metrics so that $\partial X_\tau$ becomes isometric to $S^1_\delta \times \partial F$ with the product metric  for all $\tau \in [0,1]$ and coincides for $\tau=1$ with the metric given on $\tilde X^m$, for $\tau=0$ with the metric product $X_0=S^1_\delta\times F$.\\
To see that such a family of metrics exists note that following Lemma 2.15 in \cite{klein3}, the difference of symplectic parallel transport in $\tilde X^m$ along $t \mapsto \delta e^{2\pi i t}, t \in [0,1]$ and the flow of the lifted Euler vector field is given by a time dependent Hamiltonian flow $\Phi_H(t)(t), \ t \in [0,1]$ in the fibres, that is $\Phi_{\tilde X_f}(t)= \Phi_H(t)(t)\circ \Phi_{\Omega}(t)$ where for $t \in [0,1]$, $\Phi_{\Omega}(t):\tilde X^m_x\rightarrow \tilde X^m_{e^{2\pi it}x}$ denotes symplectic parallel transport and $\Phi_{\tilde X_f}:\tilde X^m_x\rightarrow \tilde X^m_{\Phi_{e^{2\pi it}x}}$ denotes the flow along the (lifted) Euler vector field $\tilde X_f$ on $\tilde X^m$ and so that $\Phi_H(1)$ equals the $k$-th power of the symplectic monodromy mapping $\rho \in {\rm Symp}(F, \partial F, \omega)$. Consider now a path $\rho^k_t, \ t\in [0,1]$ connecting $\rho^k$ to $Id$ in ${\rm Diff}(F,\partial F)$, that is $\rho^k_0=Id$, $\rho^k_1=\rho^k$. Let
\[
\tilde \Phi_\tau:[0,1]\times F \mapsto F, \tilde \Phi_\tau(t)= (\rho^k_{t(1-\tau)})^{-1}\circ \Phi_H(t)(\tau\cdot t), \ \tau \in [0,1].
\]
Then if $J \in End(TF)$ is the almost complex structure on $F=\tilde X^m_x$, for a fixed $x$, then for $\tau=1$, the given almost complex structure on $F \times \{t\},\ t \in [0,1]$ is $\Phi_H(t)_*(J)$. Correspondingly, we define for $\tau \in [0,1]$ an almost complex structure on $F \times \{t\}$ by $J^\tau_t=\tilde \Phi_\tau(t)_*(J)$. On the other hand, set for $\tau \in [-1,0]$ $\tilde \Phi_\tau(t)=\rho^k_{t(1-\tau)}$ and $J^\tau_t=\tilde \Phi_\tau(t)_*(J), \tau \in [-1,0], t \in [0,1]$ accordingly. Set $X_\tau= F \times_{\tilde \Phi_\tau}S^1, \tau \in [0,1]$, the family of mapping cylinders associated to $F$ with $\{\tilde \Phi_\tau\}_{\tau \in [-1,1]} \subset {\rm Diff}(F,\partial F)$ as above. Then  the pair $(\omega, F)$ and the family $J^\tau_t, \ \tau\in [-1,1], t \in [0,1]$ define a family of vertical metrics $g^v_\tau$ on $X_\tau$. Lifting the base metric $g_{S^1_\delta}$ from $S^1_\delta$ to a metric $\tilde g_{S^1_\delta}$ on the horizontal space $H_{\tilde \phi_\tau}$ on $X_\tau$ given by the vector field $d/dt (e^{2\pi it}x, \tilde \Phi_\tau(t)), \ t \in [0,1]$ on $X_\tau$, and setting
\[
g_\tau= g^v_\tau \oplus_{H_{\tilde \phi_\tau}} \tilde g_{S^1_\delta}, \  \tau\in [-1,1],
\]
we arrive at a family of Riemannian manifolds $(X_\tau, g_\tau), \tau \in [-1,1]$ so that $(X_1, g_1)$ is isometric to $\tilde X^m$ and $(X_{-1}, g_{0})$ is isometric to the metrically trivial fibration $F\times S^1$. Reparametrizing in $\tau$, we arrive at the assertion.\\
Now define for $\tau \in [0,1]$
\[
L_{X_\tau}\cap (F_0^-\oplus {\rm ker }\ A)=: W_\tau,
\] 
where $L_{X_\tau}$ is the image of the Calderon projector of $X_\tau$ and note that $W_0$ coincides with our prior definition. Then setting 
\[
W^t_{\tau}:=\left(P_+\circ ((\tilde \sigma^m(t))^*)^{-1}+P_-\circ \tilde \sigma^m(t)^*\right)(W_\tau), \ t, \tau \in [0,1],
\] 
where $\tilde \sigma^m:[0,1] \times \partial F\times S^1\rightarrow \partial F \times S^1$ equals $\tilde \sigma^m(t)(x,z)=(\sigma^m(t)x, e^{2\pi it}z)$, where $\sigma^m$ is the lift of the weighted circle action $\sigma$ to $\tilde X^m$, defines a family of Lagrangian loops
\[
L_{\tau, t}:= W^t_{\tau}  \oplus \hat W' \oplus F_\nu^+,
\]
so that $L_{0, t}$ coincides with the family (\ref{reeblags}) by the metric triviality of $X_0$. For $\tau \in [0,1]$ fixed, the family $L_{\tau, t}$ thus defines a spectral flow ${\rm SF}(\tau)$, so that ${\rm SF}(0)={\rm SF}(\sigma_{\mathcal{B}})$. On the other hand, since for $\tau=1$ and by Lemma \ref{harmonicbla} in Appendix A, $\tilde \sigma^m$ acts as the identity on $\tilde r^{-1}(W_1)=:\mathcal{K}_{\tilde X^m}$, where $\tilde r:\tilde X^m\rightarrow \partial \tilde X^m$ is equal to $r=i^*(*)+i^*$, $i:\partial \tilde X^m\hookrightarrow \tilde X^m$ being the inclusion, we get ${\rm SF}(1)=0$. The assertion now follows since we can always connect $W_0$ and $W_\tau$ for $\tau \in [0,1]$ by the smooth path $\tau \mapsto W_\tau$ in $\mathcal{L}_{Fred}$ and transform the family of loops $t \mapsto W^t_{\tau}$ into a family of {\it based} (at $W_0$) loops without changing the spectral flow by Theorem \ref{nicolaescu} and the additivity of the Maslov index under concatenation.
\end{proof}
Comparing with (\ref{seidelcondition}), we see that the non-vanishing of $SF(\sigma_{\mathcal{B}})\in \mathbb{Z}$ obstructs $\rho^\beta$ to have finite order in $\pi_0({\rm Diff}(F,\partial F))$ in the same way as the non-vanishing of $sf(\alpha=0)=\frac{1}{2}SF(\alpha=0)\in \mathbb{Z}$ as introduced in the Introduction resp. Theorems \ref{theorem2} and \ref{theorem3} obstructs the symplectic monodromy $\rho_s \in {\rm Symp}(F,\partial F,\omega)$ to be of finite order in $\pi_0({\rm Symp}(F,\partial F,\omega))$, provided $(\sum_{i=1}^\mu\beta_i-\beta)/\beta \notin \mathbb{Z}\setminus 0$. The latter fact of course is basically a consequence of Theorem 2.10 in \cite{klein3}. Summarizing, we have:
\begin{folg}\label{folgreeb}
Let $n\geq 2$. Assume $(\sum_{i=1}^\mu\beta_i-\beta)/\beta \notin \mathbb{Z}\setminus 0$. Then if $SF(\sigma_{\mathcal{B}})\neq sf(\alpha=0)$, then $\rho_s \in \pi_0({\rm Symp}(F,\partial F,\omega))$ is of infinite order. If in addition, $\partial F$ is a rational homology sphere, then the map $\pi_0({\rm Symp}(F,\partial F,\omega))\rightarrow \pi_0({\rm Diff}(F,\partial F))$ has an infinite kernel.
\end{folg}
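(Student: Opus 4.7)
The plan is to establish both claims by a contrapositive argument, pivoting on two independent obstruction results: Lemma~\ref{difforder} on the smooth side and the Seidel--Klein symplectic obstruction summarized in~(\ref{seidelcondition}). I will show that if $\rho_s$ has finite order in $\pi_0(\mathrm{Symp}(F,\partial F,\omega))$, then under the standing hypothesis both $\mathrm{SF}(\sigma_{\mathcal{B}})$ and $sf(\alpha=0)$ must vanish, contradicting the given inequality.

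First I would descend from the symplectic to the smooth category. If $\rho_s^k=0$ in $\pi_0(\mathrm{Symp}(F,\partial F,\omega))$ for some $k\in\mathbb{N}$, then under the forgetful map $\pi_0(\mathrm{Symp}(F,\partial F,\omega))\to \pi_0(\mathrm{Diff}(F,\partial F))$ the geometric monodromy class $\rho$ satisfies $\rho^k=0$ as well, hence $\rho^{k\beta}=(\rho^k)^\beta=0$ in $\pi_0(\mathrm{Diff}(F,\partial F))$. Taking $m=k$ in Lemma~\ref{difforder} then immediately yields $\mathrm{SF}(\sigma_{\mathcal{B}})=0$.

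Next I would use the standing hypothesis to force $sf(\alpha=0)=0$ as well. Theorem~\ref{theorem2}(1) applied to $\Lambda'=\{0\}$ (with $\deg(z^0)=0$) gives $sf(\alpha=0)=\beta m_f$, where $m_f:=(\sum_i\beta_i-\beta)/\beta$ as in~(\ref{seidelcondition}). If $m_f=0$, then $sf(\alpha=0)=0$. Otherwise the hypothesis $m_f\notin\mathbb{Z}\setminus\{0\}$ forces $m_f\notin\mathbb{Z}$, whereupon Theorem~2.10 of~\cite{klein3}, recorded in~(\ref{seidelcondition}), asserts that $\rho_s$ is of infinite order in $\pi_0(\mathrm{Symp}(F,\partial F,\omega))$, directly contradicting our assumption. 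In either sub-case we reach $\mathrm{SF}(\sigma_{\mathcal{B}})=sf(\alpha=0)=0$, contradicting the given $\mathrm{SF}(\sigma_{\mathcal{B}})\neq sf(\alpha=0)$, and hence $\rho_s$ must have infinite order in $\pi_0(\mathrm{Symp}(F,\partial F,\omega))$.

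For the second claim, additionally assume $\partial F$ is a rational homology sphere. By the remark following Lemma~\ref{difforder} (invoking~\cite{krylov,stevens}), one has $\rho^{4\beta}=\mathrm{id}$ in $\pi_0(\mathrm{Diff}(F,\partial F))$, so $\rho$ has finite order $N$ dividing $4\beta$ there. The first part of the corollary supplies $\rho_s$ of infinite order in $\pi_0(\mathrm{Symp}(F,\partial F,\omega))$, and so the classes $\rho_s^{Nj}$, $j\geq 1$, are pairwise distinct and nontrivial in $\pi_0(\mathrm{Symp}(F,\partial F,\omega))$, while their images $(\rho^N)^j$ are all trivial in $\pi_0(\mathrm{Diff}(F,\partial F))$; this exhibits an infinite subfamily of the kernel of the forgetful map. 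The hard part is the case analysis in the previous paragraph: one must verify that the hypothesis $m_f\notin\mathbb{Z}\setminus\{0\}$ is exactly what rules out the regime in which neither Lemma~\ref{difforder} nor Theorem~2.10 of~\cite{klein3} would by itself deliver the required vanishing; everything else is essentially bookkeeping.
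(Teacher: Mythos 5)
Your proposal is correct and follows essentially the same route as the paper: assume $\rho_s$ has finite order, pass to $\pi_0({\rm Diff}(F,\partial F))$ and apply Lemma~\ref{difforder} to get ${\rm SF}(\sigma_{\mathcal{B}})=0$, then use $sf(\alpha=0)=\beta(\sum_i w_i-1)$ together with the hypothesis $(\sum_i\beta_i-\beta)/\beta\notin\mathbb{Z}\setminus 0$ and Theorem~2.10 of \cite{klein3} to reach a contradiction; your explicit power argument for the infinite kernel (via $\rho^{4\beta}={\rm id}$ from \cite{krylov}) is exactly the intended, if unwritten, completion of the paper's proof. The only nitpick is the summary phrase that ``in either sub-case'' one gets ${\rm SF}(\sigma_{\mathcal{B}})=sf(\alpha=0)=0$: in your second sub-case the contradiction is with the finite-order assumption rather than with the inequality, as you in fact note, so the logic stands.
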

\begin{proof}
Assume $SF(\sigma_{\mathcal{B}})\neq sf(\alpha=0)$ and $\rho_s \in \pi_0({\rm Symp}(F,\partial F,\omega))$ would be of finite order, then $\rho_s$ is of finite order in 
$\pi_0({\rm Diff}(F,\partial F))$ and by Lemma \ref{difforder}, $SF(\sigma_{\mathcal{B}})=0$. But then, $sf(\alpha=0)\neq 0$ and by the fact that $sf(\alpha=0)=\sum_{i}\beta_i-\beta$ and Theorem 2.10 in \cite{klein3}, $\rho_s$ is of infinite order in $\pi_0({\rm Symp}(F,\partial F,\omega))$.
\end{proof}
Note that by Seidel's result (\cite{seidel}) resp. (\ref{seidelcondition}), the condition $(\sum_{i=1}^\mu(\beta_i-\beta)/\beta \notin \mathbb{Z} \setminus 0$ is not essential. Furthermore, by the arguments in the respective introductions of \cite{klein3} and this article, we conjecture that the above Corollary remains true for the 'full spectral flow' ${\rm SF}(D_{P_{\Lambda}(t)})_{t \in [-1,2]}$ (compare Theorems \ref{theorem2} and \ref{theorem3}), that is the inequality of the latter and $SF(\sigma_{\mathcal{B}})$ would already give a sufficient condition for $\rho_s \in \pi_0({\rm Symp}(F,\partial F,\omega))$ to be of infinite order (compare also Section 4.2 of \cite{klein1}).

\section{Appendix A}\label{app2}
In this Appendix we will gather briefly some well-known results on the splitting of the exterior derivative on fibrations carrying Riemannian submersion metrics, these results are needed in Section \ref{monodr}, and draw some immediate conclusions. The arguments concerning Proposition \ref{d-decomp} are drawn from \cite{mazzeo}, further references are for instance \cite{getzler} and \cite{bilo}.\\
Let $G = \pi^*(h) + k= (\cdot, \cdot)$ be a metric orthogonalizing the splitting $TY=T^H Y\oplus T^VY$
of the tangent bundle of the total space of a fibration $Y$, where $\pi:Y \to B$ and $\phi^{-1}(b) = F_b$, denoting $P^V: TY \to T^V Y$, $P^H: TY \to T^H Y$ the orthogonal projections. Identifying tangent bundle $TB$ naturally via $\pi_*$ with $T^H Y$, we denote the lift of a section $X \in \mathcal{C}^\infty(B;TB)$ by $\tilde{X}$. In the following, we will denote sections of $T^V Y$ and $T^H Y$ by $V_1, V_2, \ldots$, and $\tilde{X}_1, \tilde{X}_2, \ldots$, respectively, $\nabla^L$ denotes the Levi-Civita connection of $G$.\\
Consider now the curvature of the horizontal distribution, resp. the second fundamental form of the vertical fibration. The latter is the bilinear form on $T^VY$ defined by
\begin{equation}
\Pi_{\tilde{X}}(V_1,V_2) =
\left( \nabla_{V_1}^L V_2, \tilde{X} \right).
\end{equation}
Now let $\Pi(V_1,V_2)$ be the horizontal vector given by
$$
\left( \Pi(V_1,V_2), \tilde{X} \right ) = \Pi_{\tilde{X}}(V_1,V_2),
$$
and let $\Pi_{\tilde{X}}(V_1)$ denote the vertical vector
determined by 
$$
\left( \Pi_{\tilde{X}}(V_1),V_2 \right) = \Pi_{\tilde{X}}(V_1,V_2).
$$
On the other hand define
\begin{equation}
\mathcal{R}(\tilde{X}_1,\tilde{X}_2) = P^V([\tilde{X}_1,\tilde{X}_2]),
\end{equation}
resp. the horizontal vector
$\hat{\mathcal{R}}_V(\tilde{X}_1)$ by
\begin{equation}
\left( \hat{\mathcal{R}}_V(\tilde{X}_1), \tilde{X}_2 \right)
= \left( \mathcal{R}(\tilde{X}_1,\tilde{X}_2), V\right)
= \left( [\tilde{X}_1,\tilde{X}_2],V\right).
\end{equation}
Now one defines the following connection which preserves the splitting of $TY$,
\[
\nabla := \left(P^V \nabla^L\right) \oplus \nabla^B,
\]
or more explicitly,
\[
\begin{array}{rcl}
\nabla_{V_1} V_2 & = & P^V (\nabla^L_{V_1}V_2), \\
\nabla_V \tilde{X} & = & 0,
\end{array}
\qquad
\begin{array}{rcl}
\nabla_{\tilde{X}}V &= &P^V(\nabla^L_{\tilde{X}}V) =
[\tilde{X},V] - \Pi_{\tilde{X}}(V), \\
\nabla_{\tilde{X}_1}\tilde{X}_2 & = & \widetilde{(\nabla^B_{X_1}X_2)}.
\end{array}
\]
We want to express the de Rham differential $d_Y$ and its adjoint in terms of $\nabla$, $\Pi$ and $\mathcal{R}$. Being metric connections, $\nabla^L$ and $\nabla$ act on 1-forms, hence extended as derivations, on $n$-forms, by duality.
Let $\{e_i\}$, $i=1, \ldots, f$ and $\{f_\mu\}$, $\mu = 1, \ldots, b$ be
orthonormal frame fields for $F$ and $B$, respectively, and
$\{e^i\}$, $\{f^\mu\}$ the dual coframe fields. Then
\begin{equation}
d_Y = \sum_{i=1}^f e^i \wedge \nabla^L_{e_i} + \sum_{\mu=1}^b
f^\mu \wedge \nabla^L_{f_\mu},
\end{equation}
analogously for $d_F$ and $d_B$. Using Proposition 12 in \cite{mazzeo} and comparing with the formulas for $\nabla$, one gets
\begin{equation}
d_Y e^j = d_F e^j + \sum f^\mu \wedge \nabla_{f_\mu}e^j
- \sum \left( \left( \Pi_{f_\mu}(e_i),e_j\right)
f^\mu \wedge e^i + \frac{1}{2}  \left(
\mathcal{R}(f_\mu,f_\nu),e_j\right)
f^\mu \wedge f^\nu\right),
\label{blaexp}
\end{equation}
and 
\begin{equation}
d_Y f^\mu = d_B f^\mu.
\end{equation}
Extend this to forms of higher degrees and note that the splitting of $TY$ induces a decomposition
 \[
\Omega^*(Y)=\Omega^{p,q}(Y)=\Omega^p(B)\ \hat\otimes \ \Omega^q(Y,T^VY)
\]
which  is preserved by $\nabla$. Thus for $\omega \in \Omega^{p,q}(Y)$, with $\omega = \pi^*(\alpha)\wedge \beta$, $\alpha \in \Omega^p(B)$ and $\beta \in \mathcal{C}^\infty(Y,\Lambda^q((T^VY)^*)$ one defines
\[
d_F \left(\pi^*(\alpha) \wedge \beta\right) = (-1)^p\pi^*(\alpha ) \wedge d_F \beta
\]
and
\[
\tilde{d}_B \pi^*(\alpha) \wedge \beta =
\pi^*(d_B \alpha) \wedge \beta + (-1)^p\pi^*(\alpha)\wedge
\left(\sum_{\mu}f^\mu \wedge \nabla_{f_\mu} \beta \right).
\]
Then (\ref{blaexp}) writes as
\[
d_Y e^j = d_F e^j + \tilde{d}_B e^j - \Pi(e^j) -\frac{1}{2} \mathcal{R}(e^j),
\]
where
\[
\Pi(e^j) =  \Pi_{\mu i j}\, f^\mu \wedge e^i, \qquad
\mathcal{R}(e^j) = \mathcal{R}_{\mu\nu j}f^\mu \wedge f^\nu.
\]
To simplify notation, let ${\rm R} = -\frac{1}{2} \mathcal{R}$. Then one has the final result:
\begin{prop} [\cite{mazzeo}] \label{d-decomp}
$d_Y = d_F + \tilde{d}_B - \Pi + {\rm R}$,  $\delta_Y = \delta_F + (\tilde{d}_B)^* - \Pi^* + {\rm R}^*$.
\end{prop}
\begin{folg}
With the notation from Chapter \ref{monodr}, the cohomology of the total space of the bundle $\tilde X$ (see \ref{betacov}), equipped with the submersion metric $\tilde g$, can be written as
\begin{equation}\label{eq523}
H^*(\tilde X,\mathbb{C})\simeq\tilde f^*H^*(S^1,\mathbb{C})\otimes \Gamma_0({\bf H}^*(\tilde Z,\mathbb{C})),
\end{equation}
where ${\bf H}^*(\tilde Z,\mathbb{C})$ denotes the $\mathbb{Z}$-graded vector bundle whose fibre over $u \in S^1$ is the cohomology of the complex $\Omega^*(\tilde Z_u,\mathbb{C})$ (note $\tilde Z_u=F$). $\Gamma_0$ indicates global parallel sections $s$ over $S^1$ that satisfy $\nabla_{\tilde X}s=L_{\tilde X}s=0$.
\end{folg}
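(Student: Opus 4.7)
The plan is to deduce the cohomology decomposition directly from Proposition \ref{d-decomp} applied to $\tilde f:\tilde X\to S^1$ with the submersion metric $\tilde g$. First I would observe that, for this particular metric, two of the four terms in the splitting of $d_{\tilde X}$ drop out. The fibres of $\tilde f$ are totally geodesic in $\tilde g$ (this is a property of the metric induced by the Euler vector field, stated in the paragraph above the Corollary and established in Section~3 of \cite{klein2}), so the second fundamental form $\Pi$ vanishes. Moreover, since $B=S^1$ is one-dimensional, the curvature term $R=-\tfrac12\mathcal{R}$ automatically vanishes: it is built from $\eta^\mu\wedge\eta^\nu$, and only one $\eta$ is available. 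Consequently Proposition~\ref{d-decomp} simplifies to
\[
d_{\tilde X}=d_F+\tilde d_B.
\]

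Second, I would use this splitting together with the decomposition $\Omega^*(\tilde X,\mathbb{C})=\tilde f^*\Omega^*(S^1,\mathbb{C})\otimes \Omega^*_V(\tilde X,\mathbb{C})$ to filter the de Rham complex by horizontal degree. Since $S^1$ has dimension one, the resulting spectral sequence is concentrated in horizontal degrees $0$ and $1$ and collapses at $E_2$. Its $E_1$ page is $\tilde f^*\Omega^*(S^1,\mathbb{C})\otimes {\bf H}^*(\tilde Z,\mathbb{C})$, and the $E_1$ differential is exactly $\tilde d_B$, which on sections of ${\bf H}^*(\tilde Z,\mathbb{C})$ coincides with the connection $\nabla_{X^h}$ alluded to in the excerpt (i.e.\ the Gauss-Manin connection on the bundle of fibrewise cohomologies).

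Third, I would identify the $E_2$ page. By construction $\tilde X$ is the $\beta$-fold cyclic cover of $X$, so as recorded in the Wang sequence (\ref{wang}) the algebraic monodromy $h^\beta$ acts as the identity on $H^*(F,\mathbb{C})$. Hence the flat bundle ${\bf H}^*(\tilde Z,\mathbb{C})\to S^1$ has trivial holonomy, every cohomology class is represented by a globally parallel section, and
\[
H^0(S^1,{\bf H}^*(\tilde Z,\mathbb{C}))\iso H^1(S^1,{\bf H}^*(\tilde Z,\mathbb{C}))\iso \Gamma_0({\bf H}^*(\tilde Z,\mathbb{C})).
\]
Combining this with the degeneration at $E_2$ gives
\[
H^*(\tilde X,\mathbb{C})\iso \tilde f^*H^*(S^1,\mathbb{C})\otimes \Gamma_0({\bf H}^*(\tilde Z,\mathbb{C})),
\]
as desired.

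The only step where something could go wrong is the identification of $E_1$'s differential with the connection $\nabla_{X^h}$ on parallel sections; this is a bookkeeping point requiring one to trace through the definition of $\tilde d_B$ on the tensor factorization and verify that it agrees with the connection described in Appendix~A. The vanishing of $\Pi$ and $R$ is essentially automatic once the metric $\tilde g$ is in place, so I do not expect a real obstacle there. Once $d_{\tilde X}=d_F+\tilde d_B$ is in hand, the remainder is a standard double-complex/Künneth argument made trivial by $\dim B=1$ and the trivial monodromy.
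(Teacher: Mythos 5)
Your proposal is correct and follows essentially the same route as the paper: the paper's proof likewise invokes Proposition \ref{d-decomp} and notes that $\Pi$ vanishes because the fibres are totally geodesic for $\tilde g$ and that the curvature term vanishes because the base is $S^1$, and then declares the decomposition immediate. Your additional spectral-sequence/trivial-holonomy bookkeeping (using $h^\beta=\id$ from the Wang sequence (\ref{wang})) simply makes explicit the step the paper leaves implicit, and it is sound.
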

\begin{proof}
The proof is immediate since with respect to $\tilde g$, the fibres of $\tilde X$ are totally geodesic, hence the second fundamental form vanishes. Furthermore, since the base is $S^1$, the horizontal curvature term also vanishes.
\end{proof}
For the following note that the {\it extended $L^2$-sections} of $\Omega^*(\tilde X_\infty,\mathbb{C})$ were defined in Atiyah \cite{Atiyah} to be sections $u$ which are locally $L^2$ and sucht that, with respect to coordinates $(y,t)$ on $\partial \tilde X\times (-\infty,0]$ over the collar one has 
\[
u(y,t)=g(y,t)+f_\infty(y),
\]
where $g$ is $L^2$ and $f_\infty\in {\rm ker}\ A$. As it well-known (cf. \cite{lesch}), for the signature operator $D$ on $\tilde X$ the extended $L^2$-harmonic forms on $\tilde X_\infty$ (extended $L^2$-sections satisfying $d\beta=0$and $d*\beta=0$ on $\tilde X_\infty$) are isomorphic to the solutions $\beta$ of $D\beta=0$ on $\tilde X$ satisfying the condition $r(\beta) \in F_0^-\oplus {\ker A}$, the latter we denoted in Chapter \ref{monodr} as $\mathcal{K}_{\tilde X}$. The following Lemma shows how $\mathcal{K}_{\tilde X}$ decomposes using the fact that $\tilde X$  has totally geodesic fibres using the (lift of the) submersion metric introduced in Section 3.1 of \cite{klein2} for $X$.
\begin{folg}\label{harmonicbla}
Let $\mathcal{K}_{\tilde X}$ be as in Chapter \ref{monodr} the space of extended $L^2$-solutions of the signature operator on $\tilde X_\infty$, then one has ($\sum_{\rm even}$ indicating summation over even forms)
\begin{equation}\label{L2harmonicdecomp}
\mathcal{K}_{\tilde X}=\sum_{\rm even}\tilde f^*\mathcal{H}^*(S^1,\mathbb{C}) \otimes \Gamma_0(\mathcal{K}_F),
\end{equation}
where $\mathcal{K}_F$ denotes representatives of ${\bf H}^{*}(\tilde Z,\mathbb{C})$ being for any $u\in S^1$ fibrewise extended $L^2$-harmonic forms on $\tilde Z_{u,\infty}$, where the latter denotes the elongation $\tilde Z_u$ along its metric collar (see \cite{Atiyah}). In particular, these representatives satisfy $d_F\beta=\delta_F\beta=0$ and decompose for any $u \in S^1$ as (set $F=\tilde Z_u$)
\begin{equation}\label{bla543}
\mathcal{K}_F|{Z_u} \simeq \left(im(H^{*}(F,\partial F,\mathbb{C})\rightarrow H^{*}(F,\mathbb{C}))\oplus im(H^*(F,\mathbb{C})\rightarrow H^*(\partial F, \mathbb{C}))\right),
\end{equation}
$\mathcal{H}^*(S^1,\mathbb{C})$ denotes the closed and coclosed forms on $S^1$.
\end{folg}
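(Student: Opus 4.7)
The plan is to invoke Proposition \ref{d-decomp} first. For the fibration $\tilde f:\tilde X\to S^1$ with the submersion metric $\tilde g$ of Section 3 in \cite{klein2}, the fibres are totally geodesic (so $\Pi=0$) and the base is one-dimensional (so the horizontal curvature term ${\rm R}=-\tfrac{1}{2}\mathcal{R}$, being antisymmetric in two horizontal indices, vanishes identically). Hence
\[
d_{\tilde X}=d_F+\tilde d_B,\qquad \delta_{\tilde X}=\delta_F+(\tilde d_B)^*,
\]
and the bigrading $\Omega^{p,q}$ of $\Omega^*(\tilde X,\mathbb{C})$ is preserved by the connection $\nabla$. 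The generator $\eta=\tilde f^*(d\theta)$ of $H^1(S^1,\mathbb{C})$ allows one to write every form uniquely as $\beta=\beta_0+\eta\wedge\beta_1$ with $\beta_0,\beta_1$ vertical.

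Next I would show that every $\beta\in\mathcal{K}_{\tilde X}$ is both closed and coclosed: from $\tilde D\beta=0$ and $\tilde D^2=d\delta+\delta d$, integration by parts on $\tilde X_\infty$ combined with the standard Atiyah vanishing of the boundary terms for extended $L^2$ profiles (\cite{Atiyah}) gives $d\beta=\delta\beta=0$. Substituting $\beta=\beta_0+\eta\wedge\beta_1$ into the split formulas for $d_{\tilde X}$ and $\delta_{\tilde X}$ and separating by bidegree forces
\[
d_F\beta_i=\delta_F\beta_i=0\quad\text{and}\quad \nabla_{\eta^\vee}\beta_i=0\qquad (i=0,1),
\]
where $\eta^\vee$ denotes the horizontal lift of $\partial/\partial\theta$.

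Finally I would interpret these conditions. The first pair says that each $\beta_i|\tilde Z_u$ is fibrewise harmonic; combined with the product form $\partial\tilde X\times[0,\infty)=(\partial F\times[0,\infty))\times S^1$ of the cylindrical end of $\tilde X_\infty$, the global extended $L^2$ condition translates directly into the fibrewise extended $L^2$ condition on each $\tilde Z_{u,\infty}$, so that $\beta_i|\tilde Z_u\in\mathcal{K}_F|\tilde Z_u$. The parallelism $\nabla_{\eta^\vee}\beta_i=0$ says precisely that $\beta_i\in\Gamma_0(\mathcal{K}_F)$, and the two summands $\beta_0$ and $\eta\wedge\beta_1$ correspond to the two basis elements $1,\eta$ of $\mathcal{H}^*(S^1,\mathbb{C})$. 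Restricting to even total degree yields (\ref{L2harmonicdecomp}); the fibrewise identification (\ref{bla543}) is then a direct application of Atiyah's theorem from \cite{Atiyah} to the compact manifold with boundary $\tilde Z_u=F$ and its cylindrical elongation $\tilde Z_{u,\infty}$.

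The main obstacle I expect is the rigorous decoupling of $d\beta=\delta\beta=0$ into the fibrewise equations and the horizontal parallelism under only extended (rather than compactly supported) $L^2$-decay. This requires checking that the tangential operator $\tilde A$ on $\partial\tilde X=\partial F\times S^1$ splits compatibly with the bigrading, so that the asymptotic $\ker\tilde A$-pieces $f_\infty$ inherit the decomposition and all boundary contributions in the Green-type identities really vanish; the product form of both metric and operator makes this routine, but it is the only place where genuine analysis rather than formal calculation enters.
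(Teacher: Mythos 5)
Your overall strategy (use Proposition \ref{d-decomp} with $\Pi=0$, $\mathcal{R}=0$, then decompose an arbitrary element of $\mathcal{K}_{\tilde X}$) is reasonable, but the central step is overstated to the point of being a gap. Writing $\beta=\beta_0+\eta\wedge\beta_1$ and separating $d\beta=0$, $\delta\beta=0$ by bidegree does \emph{not} force $d_F\beta_i=\delta_F\beta_i=0$ and $\nabla_{\eta^\vee}\beta_i=0$. What bidegree separation actually gives is the coupled first-order system
\begin{equation*}
d_F\beta_0=0,\qquad \delta_F\beta_1=0,\qquad \nabla_{\eta^\vee}\beta_0=d_F\beta_1,\qquad \nabla_{\eta^\vee}\beta_1=\pm\,\delta_F\beta_0 ,
\end{equation*}
since the $(1,q)$-component of $d\beta$ mixes $\tilde d_B\beta_0$ with $d_F(\eta\wedge\beta_1)$, and the $(0,q-1)$-component of $\delta\beta$ mixes $\delta_F\beta_0$ with $(\tilde d_B)^*(\eta\wedge\beta_1)$. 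Decoupling this system is the actual content of the hard inclusion: one needs a global argument (Fourier decomposition along $S^1$ using the parallel-transport trivialization, or an energy identity pairing the cross terms and integrating over $\tilde X_\infty$), and precisely here the extended $L^2$ condition bites: extended solutions are not in $L^2$ on the elongation (they carry a nonvanishing limiting term in $\ker A$), the fibre itself has a cylindrical end, and the geometric monodromy of $\tilde X$ is only cohomologically, not pointwise, trivial, so neither the $S^1$-integration by parts nor the fibrewise Hodge decomposition is "routine". You flag the decoupling as the main obstacle at the end, but the proof as written asserts it follows from bidegree bookkeeping, which it does not.

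Note that the paper avoids this direction entirely: it only checks the easy inclusion, namely that elements of $\tilde f^*\mathcal{H}^*(S^1,\mathbb{C})\otimes\Gamma_0(\mathcal{K}_F)$ are closed, coclosed, extended $L^2$-solutions of $D\beta=0$ on $\tilde X_\infty$, and then concludes equality by comparing dimensions, using Atiyah's cohomological identification of $\mathcal{K}_{\tilde X}$ (the analogue of (\ref{isom}) for $\tilde X$, i.e.\ (\ref{L2kernel})) together with $H^*(\tilde X,\mathbb{C})\simeq H^*(S^1,\mathbb{C})\otimes H^*(F,\mathbb{C})$ from the Wang sequence ($h^\beta=\mathrm{id}$); equation (\ref{bla543}) is then quoted from \cite{Atiyah}. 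If you want to salvage your direct approach, you must either supply the decoupling argument with careful control of the boundary/limiting terms on both cylindrical ends, or replace it, as the paper does, by the dimension count, for which the Künneth-type identification of $H^*(\tilde X,\mathbb{C})$ is the essential input your write-up currently never uses.
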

\begin{proof}
Using the splitting formulae for $d_{\tilde X}$ and $\delta_{\tilde X}$ from above and the fact that $\mathcal{K}_{\tilde X}$ consists of $\beta \in \Omega^*(\tilde X,\mathbb{C})$ so that $r(\beta) \in F_0^-\oplus {\rm ker}\ A$ which is equivalent (see Lesch/Kirk (\cite{lesch} and Atiyah et al.\cite{Atiyah})) to the conditions  $d\beta=d(*\beta)=0$ and $\beta$ being an extended $L^2$-section of $\Omega*(\tilde X,\mathbb{C})$, we see since $\tilde f^*\mathcal{H}^*(S^1,\mathbb{C}) \otimes\Gamma_0(\mathcal{K}_F)$ describes sections in $\Omega^*(\tilde X,\mathbb{C})$ being closed and coclosed and are extended $L^2$-solutions of $D\beta=0$ on $\tilde X_\infty$, that the dimensions of the former and the latter coincide, hence the two spaces are equal, which implies  (\ref{L2harmonicdecomp}). Equation (\ref{bla543}) is shown in Atiyah et al. (\cite{Atiyah}).
\end{proof}

\end{document}